\documentclass[12pt, reqno, a4paper]{amsart}

\usepackage{amssymb}
\usepackage{color}
\definecolor{darkgreen}{rgb}{0,0.45,0} 
\usepackage[
colorlinks,citecolor=blue,linkcolor=blue, breaklinks]{hyperref}
\usepackage[matrix, arrow, curve]{xy}
\usepackage{bbm}
\usepackage{xcolor}
\usepackage{enumitem}

\usepackage[english]{babel}

\oddsidemargin=0in%
\evensidemargin=0in%
\topmargin=-30pt%
\textheight=700pt%
\textwidth=6.5in%

\binoppenalty=\maxdimen
\relpenalty=\maxdimen

\sloppy

\theoremstyle{plain}
\newtheorem{theorem}{Theorem}[section]
\newtheorem{lemma}[theorem]{Lemma}
\newtheorem{proposition}[theorem]{Proposition}
\newtheorem{corollary}[theorem]{Corollary}

\theoremstyle{remark}
\newtheorem{remark}[theorem]{Remark}

\theoremstyle{definition}
\newtheorem{example}[theorem]{Example}

\newtheorem{definition}[theorem]{Definition}

\numberwithin{equation}{section}

\DeclareMathOperator{\id}{id}
\DeclareMathOperator{\ch}{char}
\DeclareMathOperator{\Ker}{Ker}

\DeclareMathOperator{\GL}{GL}

\DeclareMathOperator{\End}{End}
\DeclareMathOperator{\Aut}{Aut}

\DeclareMathOperator{\supp}{supp}
\DeclareMathOperator{\cosupp}{cosupp}

\DeclareMathOperator{\Hom}{Hom}


\usepackage{graphicx}
\usepackage[geometry]{ifsym}

\renewcommand{\square}{\text{\normalfont\scalebox{.6}{\SmallSquare}}}


\DeclareRobustCommand{\No}{\ifmmode{\nfss@text{\textnumero}}\else\textnumero\fi} 


\newbox\skewpullbackbox
\setbox\skewpullbackbox=\hbox{\xy 0;<1mm,0mm>: \POS(4,0)\ar@{-} (-4,0) \ar@{-} (8,4)
	\endxy}

\newbox\skwepullbackbox
\setbox\skwepullbackbox=\hbox{\xy 0;<1mm,0mm>: \POS(16,0)\ar@{-} (10,0) \ar@{-} (12,4)
	\endxy}

\newbox\ksewpullbackbox
\setbox\ksewpullbackbox=\hbox{\xy 0;<1mm,0mm>: \POS(0,-8)\ar@{-} (0,-4) \ar@{-} (4,-4)
	\endxy}

\newbox\pullbackbox
\setbox\pullbackbox=\hbox{\xy 0;<1mm,0mm>: \POS(4,0)\ar@{-} (0,0) \ar@{-} (4,4)
	\endxy}

\newbox\pullbackabox
\setbox\pullbackabox=\hbox{\xy 0;<1mm,0mm>: \POS(-4,-6)\ar@{-} (-8,-6) \ar@{-} (-4,-2)
	\endxy}

\newbox\pullbackbbox
\setbox\pullbackbbox=\hbox{\xy 0;<1mm,0mm>: \POS(-4,-4)\ar@{-} (-8,-4) \ar@{-} (-4,0)
	\endxy}

\newbox\pullbackcbox
\setbox\pullbackcbox=\hbox{\xy 0;<1mm,0mm>: \POS(4,-7)\ar@{-} (0,-7) \ar@{-} (4,-3)
	\endxy}

\newbox\pullbackdbox
\setbox\pullbackdbox=\hbox{\xy 0;<1mm,0mm>:\POS(-10,-6)\ar@{-} (-14,-6) \ar@{-} (-10,-2)
	\endxy}

\newbox\pushoutbox
\setbox\pushoutbox=\hbox{\xy 0;<1mm,0mm>: \POS(0,4)\ar@{-} (0,0) \ar@{-} (4,4)
	\endxy}

\newbox\pushoutabox
\setbox\pushoutabox=\hbox{\xy 0;<1mm,0mm>: \POS(2,8)\ar@{-} (2,4) \ar@{-} (8,8)
	\endxy}


\newcounter{dummy}
\makeatletter
\newcommand\myitem[1][]{\item[(#1)]\refstepcounter{dummy}\def\@currentlabel{#1}}
\makeatother


\makeatletter
\@namedef{subjclassname@2020}{\textup{2020} Mathematics Subject Classification}
\makeatother

\begin{document}

\title{On the classification of quantum symmetries}

\author{A.\,S. Gordienko}

\email{alexey.gordienko@math.msu.ru}

\author{A.\,I. Pekarsky}

\email{aleksandr.pekarskii@math.msu.ru}

\address{Department of Higher Algebra,
	Faculty of Mechanics and  Mathematics,
	M.\,V.~Lomonosov Moscow State University,
	Leninskiye Gory, d.1,  Main Building, GSP-1, 119991 Moskva, Russia}

\keywords{Hopf algebra, quantum symmetry, Hopf algebra (co)actions, group grading, cosupport.}

\begin{abstract} We show that, in order to classify Hopf algebra (co)actions on a given finite dimensional algebra up to equivalence, one should start with the classification of the possible \textit{cosupports} (i.e. the sets of linear operators by which $H^*$ is acting) of Hopf algebra coactions
	and then consider dual Hopf algebra actions. As an application, we classify quantum symmetries of the set of two points and the algebra of dual numbers. In addition, we show that the straight line does not admit an (ungraded) universal coacting Manin Hopf algebra.
	Moreover, we prove that for $n\geqslant 14$  the full matrix algebra $M_n(\mathbbm k)$
	admits a nontrivial Hopf algebra coaction such that all Hopf algebra actions with the same restriction on the cosupport are trivial, i.e. the cosupport may reduce under the dualization and a finite dimensional algebra may have less equivalence classes of actions than coactions.
	Simultaneously, for any $n\geqslant 14$, we define an elementary grading  on $M_n(\mathbbm k)$ by an infinite group
	that cannot be regraded by any finite group. (The previously known lower bound was $n\geqslant 349$.)
\end{abstract}

\subjclass[2020]{Primary 16T05; Secondary 16W50, 16W22, 20E26, 20F05.}

\thanks{The authors were supported by the Theoretical Physics and Mathematics Advancement Foundation ``BASIS''.
	The study was conducted under the state assignment of Lomonosov Moscow State University.}

\maketitle

\section{Introduction}

When an affine algebraic group $G$ is acting morphically on an affine algebraic variety $X$,
the algebra $\mathcal O (X)$ of regular functions on $X$ is a comodule algebra over the Hopf algebra $\mathcal O(G)$
and the module algebra over both the group Hopf algebra $\mathbbm k G$ and the universal enveloping algebra $U(\mathfrak g)$
of the Lie algebra of $G$ where $\mathbbm k$ is a base field. Note that in this situation, where one can say that $G$ is acting on $X$ by \textit{classical symmetries},
$\mathcal O(X)$ and $\mathcal O(G)$ are commutative and $\mathbbm k G$ and $U(\mathfrak g)$ are cocommutative.
When a not necessarily (co)commutative Hopf algebra $H$ is (co)acting on a not necessarily commutative algebra $A$, one can
interpret this as that the quantum group $H$ is acting by \textit{quantum symmetries} on the noncommutative space whose ``algebra of functions'' is $A$. 
This very general approach traces its origins back to Yu.\,I.~Manin's notes on quantum groups and noncommutative geometry~\cite{Manin}.
	On the other hand, the problem of the classification of quantum symmetries of a given algebra can be viewed as a generalization of the problem of the classification of automorphisms, group gradings, (skew)derivations, etc.
	While the Manin Hopf algebra is universal among all not necessarily commutative coacting Hopf algebras,
	one can consider the universal commutative coacting Hopf algebra, which is precisely the Hopf algebra corresponding to the automorphism group scheme~\cite[Sections 1.5 and 7.6]{Waterhouse}. Hence the Manin Hopf algebra can be viewed as the noncommutative counterpart of the automoprhism group scheme.	
	Some recent work on Hopf actions
	and their properties includes, but is not limited to the papers~\cite{AK2024, BM21, CY20, CKWZ16, CWZ23, CG20, CE17, CEW16, EN22, EW16, EW17, GW22, KO21, LNY20}.

In some cases group gradings are classified up to isomorphism and in some cases up to equivalence, see e.g.~\cite{ElduqueKochetov}.
While the classification up to equivalence may seem coarser, the classification up to isomorphism can be essentially recovered
using the notion of the universal group of the grading
introduced by J.~Patera and H.~Zassenhaus in 1989~\cite{PZ89}. The notion of equivalence of Hopf algebra (co)actions
as well as their universal (co)acting Hopf algebras were introduced in~\cite{ASGordienko20ALAgoreJVercruysse}.
The latter were united with the Sweedler~--- Manin~--- Tambara universal (co)acting bi/Hopf algebras into a single theory
in~\cite{ASGordienko21ALAgoreJVercruysse} via the notion of a $V$-universal (co)acting Hopf algebra.  Moreover, the Duality Theorem~\cite[Theorem~4.15]{ASGordienko21ALAgoreJVercruysse} for $V$-universal (co)acting Hopf algebra was proved. A purely categorical approach to quantum symmetries was undertaken in~\cite{ASGordienko24ALAgoreJVercruysse, ASGordienko25ALAgoreJVercruysse}.

While there are several general results on universal (co)acting Hopf algebras and duality between them, 
the only case where all Hopf algebra actions have been completely classified so far was the case of the algebra $\mathbbm k[x]/(x^2)$ of dual numbers~\cite[Theorem~6.7]{ASGordienko20ALAgoreJVercruysse}, however even in this case
the universal acting Hopf algebra was calculated only for two of three possible equivalence classes of actions.
In Section~\ref{SectionClassificationOutline} we propose a general classification strategy for quantum symmetries of finite dimensional algebras, which we follow in Sections~\ref{SectionTwoPoints} and~\ref{SectionDualNumbers}
and classify all quantum symmetries (both Hopf algebra actions and coactions) of the set of two points and of $\mathbbm k[x]/(x^2)$. Furthermore, we calculate all the corresponding universal (co)acting Hopf algebras.

The Duality Theorem implies that for every equivalence class of Hopf algebra actions on a finite dimensional algebra $A$
there exists an equivalence class of Hopf algebra coactions on $A$ with the same cosupport. Therefore, there cannot be more
equivalence class of actions than coactions. The natural question arises as to whether a bijection always exists between the sets
of equivalence classes of actions and coactions. This question is equivalent to the following: can the cosupport of the $V$-universal Hopf algebra action dual to a $V$-universal Hopf algebra coaction $\rho^{\mathbf{Hopf}}_{A,V}$
with $\cosupp \rho^{\mathbf{Hopf}}_{A,V} = V$
become smaller than $V$? In Section~\ref{SectionCosuppReduces} we show that the answer is ``yes''.
In fact, for any $n\geqslant 14$, we define an elementary grading on the full matrix algebra $M_n(\mathbbm k)$ by an infinite group
that cannot be regraded by any finite group and show that the action dual to the corresponding group algebra coaction is trivial.
Simultaneously, we solve another problem since the previously known lower bound
for $n$ such that on $M_n(\mathbbm k)$ there exists an elementary grading that cannot be regraded by a finite group was $n\geqslant 349$, see~\cite{ASGordienko18Schnabel}.

While the universal acting Hopf algebras always exist~\cite[Section~4.1]{ASGordienko21ALAgoreJVercruysse},
the universal coacting Hopf algebras do not necessarily exist for infinite dimensional algebras. The first examples were constructed
in~\cite[Example~4.3]{ASGordienko21ALAgoreJVercruysse}. In Theorem~\ref{TheoremYuIManinHopfAlgebraStraightLineDoesNotExist}
we show that the (ungraded) Manin universal (co)acting Hopf algebra does not exist even for a straight line.

\section{Group gradings}\label{SectionGroupGradings}

We begin with group gradings, as they are an important particular case of quantum symmetries (see the precise definition of the latter in Section~\ref{Subsection(Co)moduleAlgebras} below) and many definitions and constructions related to quantum symmetries are inspired by those of gradings.

Recall that $\Gamma \colon A=\bigoplus_{g \in G} A^{(g)}$ (direct sum of subspaces)
is a \textit{grading} on a (not necessarily associative) algebra $A$ over a field $\mathbbm k$ by a group $G$ if $A^{(g)}A^{(h)}\subseteq A^{(gh)}$ for all $g,h\in G$.
Then $G$ is called the \textit{grading group} of $\Gamma$.
The algebra $A$ is called \textit{graded} by $G$.

\begin{example} Let $\mathbbm k$ be a field and let $g_k \in G$ for $1\leqslant k \leqslant n$ for some group $G$
	and $n\in\mathbb N$.  An \textit{elementary grading} on the full matrix algebra $M_n(\mathbbm k)$
	defined by the $n$-tuple $(g_1, \dots, g_n)$
	is the grading where for every $1\leqslant i,j\leqslant n$ the matrix unit $e_{ij}$ belongs to $M_n(\mathbbm k)^{\left(g_i g_j^{-1}\right)}$.
\end{example}
\begin{remark}\label{RemarkElementary}
	Note that such a grading is uniquely determined by defining the $G$-degrees of $e_{i,i+1}$, $1\leqslant i \leqslant n-1$. If $G$ is an arbitrary group and $(h_1, \dots, h_{n-1})$ is an arbitrary $(n-1)$-tuple of elements of $G$, then the elementary grading with $e_{i,i+1} \in M_n(\mathbbm k)^{(h_i)}$ can be defined by $(g_1, \dots, g_n)$ where $g_i = \prod\limits_{j=i}^{n-1} h_j$. If a graded algebra $A$ is unital,
	then its identity element~$1_A$ belongs to $A^{(1)}$. Therefore, elementary gradings on full matrix algebras are precisely the gradings where the matrix units are \textit{homogeneous elements}, i.e. belong to the union of the graded components $M_n(\mathbbm k)^{(g)}$.
\end{remark}

When studying graded algebras, one has to determine, when two
graded algebras can be considered ``the same'' or equivalent.

Let \begin{equation}\label{EqTwoGroupGradings}\Gamma_1 \colon A_1=\bigoplus_{g \in G_1} A_1^{(g)},\qquad \Gamma_2
\colon A_2=\bigoplus_{g \in G_2} A_2^{(g)}\end{equation} be two gradings where $G_1$
and $G_2$ are groups and $A_1$ and $A_2$ are algebras.

The most restrictive case is when we require that both grading groups
coincide:

\begin{definition}[{e.g. \cite[Definition~1.15]{ElduqueKochetov}}]
	\label{DefGradedIsomorphism}
	Gradings~(\ref{EqTwoGroupGradings}) are \textit{isomorphic} if $G_1=G_2$ and there exists an isomorphism $\varphi \colon A_1 \mathrel{\widetilde\to} A_2$
	of algebras such that $\varphi\left(A_1^{(g)}\right)=A_2^{(g)}$
	for all $g\in G_1$.
\end{definition}

In this case we say that $A_1$ and $A_2$ are \textit{graded isomorphic}.

If one studies the graded structure of a graded algebra or its graded polynomial identities~\cite{AljaGia, AljaGiaLa,
	BahtZaiGradedExp, GiaLa, ASGordienko9}, then it is not really important by elements of which group the graded components are indexed.
A replacement of the grading group leaves both graded subspaces (and therefore graded ideals) graded. In the case of graded polynomial identities reindexing
the graded components leads only to renaming the variables.
Here we come naturally to the notion of equivalence of gradings.

\begin{definition}[{e.g. \cite[Definition~1.14]{ElduqueKochetov}}]\label{DefGradedEquivalence}
	Gradings~(\ref{EqTwoGroupGradings}) are \textit{equivalent}, if there
	exists an isomorphism $\varphi \colon A_1 \mathrel{\widetilde\to}A_2$
	of algebras such that for every $g_1\in G_1$ with $A_1^{(g_1)}\ne 0$ there
	exists $g_2\in G_2$ such that $\varphi\left(A_1^{(g_1)}\right)=A_2^{(g_2)}$.
\end{definition}
\begin{remark}
	In~\cite{ASGordienko18Schnabel} the term ``weak equivalence''  was used instead of simply ``equivalence''.
\end{remark}

Obviously, if gradings are isomorphic, then they are equivalent.
It is important to notice that the converse is not true.

However, if gradings~(\ref{EqTwoGroupGradings}) are equivalent
and $\varphi \colon A_1 \mathrel{\widetilde\to}A_2$ is the corresponding isomorphism of algebras,
then $\Gamma_3 \colon A_1=\bigoplus_{g \in G_2} \varphi^{-1}\left( A_2^{(g)}\right)$ is a $G_2$-grading on $A_1$ isomorphic
to $\Gamma_2$ and the grading $\Gamma_3$ is obtained from $\Gamma_1$ just by reindexing the homogeneous components.
Therefore, when gradings~(\ref{EqTwoGroupGradings}) are equivalent,
we can identify $\Gamma_2$ and $\Gamma_3$ via $\varphi$ and assume that we have gradings on the same algebra.
In this case we say that $\Gamma_1$ \textit{can be regraded} by~$G_2$.

If $A_1=A_2$ and $\varphi$ in Definition~\ref{DefGradedEquivalence} is the identity map, we say that $\Gamma_1$ and $\Gamma_2$ are \textit{realizations
	of the same grading} on $A$ as, respectively, a $G_1$- and an $G_2$-grading.

For a grading $\Gamma \colon A=\bigoplus_{g \in G} A^{(g)}$, we denote by $\supp \Gamma := \lbrace g\in G \mid A^{(g)}\ne 0\rbrace$ its support.

\begin{remark}Each equivalence between gradings $\Gamma_1$ and $\Gamma_2$
	induces a bijection $\supp \Gamma_1 \mathrel{\widetilde\to} \supp \Gamma_2$.
\end{remark}

Each group grading on an algebra can be realized as a $G$-grading for many different groups $G$, however it turns out that there is one distinguished group among them~\cite[Definition~1.17]{ElduqueKochetov}, \cite{PZ89}.

\begin{definition}\label{DefUniversalGrpGrd}
	Let $\Gamma$ be a group grading on an algebra $A$. Suppose that $\Gamma$ admits a realization
	as a $G_\Gamma$-grading for some group $G_\Gamma$. Denote by $\varkappa_\Gamma$ the corresponding embedding
	$\supp \Gamma \hookrightarrow G_\Gamma$. We say that $(G_\Gamma,\varkappa_\Gamma)$ is the \textit{universal group of the grading $\Gamma$} if for any realization of $\Gamma$ as a grading by a group $G$
	with $\psi \colon \supp \Gamma \hookrightarrow G$ there exists
	a unique homomorphism $\varphi \colon G_\Gamma \to G$ such that the following diagram is commutative:
	$$\xymatrix{ \supp \Gamma \ar[r]^(0.6){\varkappa_\Gamma} \ar[rd]_\psi & G_\Gamma \ar@{-->}[d]^\varphi \\
		& G
	}
	$$
\end{definition}

\begin{remark} 
	For each grading $\Gamma$ one can define a category $\mathcal C_\Gamma$
	where the objects are all pairs $(G,\psi)$ such that $G$ is a group and $\Gamma$ can be realized
	as a $G$-grading with $\psi \colon \supp \Gamma \hookrightarrow G$ being the embedding of the support.
	In this category the set of morphisms between $(G_1,\psi_1)$ and $(G_2,\psi_2)$ consists
	of all group homomorphisms $f \colon G_1 \to G_2$ such that the diagram below is commutative:
	$$\xymatrix{ \supp \Gamma \ar[r]^(0.6){\psi_1} \ar[rd]_{\psi_2} & G_1 \ar[d]^f \\
		& G_2
	}
	$$
	Then $(G_\Gamma,\varkappa_\Gamma)$ is the initial object of $\mathcal C_\Gamma$.
\end{remark}
\begin{remark} 
	It is easy to see that if $\Gamma \colon A = \bigoplus_{g\in\supp \Gamma} A^{(g)}$ is a group grading,
	then the universal group $G_\Gamma$ of the grading $\Gamma$ is isomorphic to $\mathcal F_{[\supp \Gamma]}/N$ where $\mathcal F_{[\supp \Gamma]}$ is the free group on the set $[\supp \Gamma]:=\lbrace [g] \mid g \in \supp \Gamma \rbrace$ and $N$ is the normal closure of the words $[g][h][t]^{-1}$ for pairs $g,h \in \supp \Gamma$
	such that $A^{(g)}A^{(h)}\ne 0$ where $t\in \supp\Gamma$ is defined by $A^{(g)}A^{(h)}\subseteq A^{(t)}$.
\end{remark}

The classification of gradings on a given algebra $A$ up to equivalence may seem coarser than the classification
up to isomorphism. However, if we know the classification up to equivalence
and calculate the universal groups of the corresponding gradings, we can recover the classification
up to isomorphism by restricting the grading groups to the subgroups generated by the supports.

\section{Quantum symmetries}

\subsection{(Co)module algebras}\label{Subsection(Co)moduleAlgebras}

We refer the reader to~\cite{Abe,DNR, KasselQuantumGroups, Montgomery}
for an account on Hopf algebras and their (co)actions on algebras.

Let $A$ be a (not necessarily associative) algebra and let $H$ be a Hopf algebra over a field~$\mathbbm{k}$.
A $\mathbbm{k}$-linear map $\psi \colon H \otimes A \to A$ is called an \textit{$H$-action} on $A$
if $\psi$ defines on $A$ a structure of a left $H$-module and
\begin{equation}\label{EqModCompat} h(ab)=(h_{(1)}a)(h_{(2)}b) \text{
	for all }a,b\in A,\ h\in H\end{equation}
where we use Sweedler's notation $\Delta h = h_{(1)}\otimes h_{(2)}$ (the summation sign is omitted)
for the comultiplication $\Delta \colon H \to H \otimes H$. The algebra $A$ endowed with $\psi$
is called an \textit{$H$-module algebra}. If there exists
an identity element $1_A \in A$ such that $h 1_A =\varepsilon(h)1_A$
for all $h\in H$, then $A$ and $\psi$ will be called a \textit{unital} $H$-module algebra
and a \textit{unital} $H$-action, respectively.

\begin{example} Let $G$ be a group and let $\mathbbm k$ be a field. Denote by $\mathbbm k G$ the group algebra of~$G$
	over $\mathbbm k$. Recall that $\mathbbm k G$ is a Hopf algebra where the comultiplication $\Delta \colon \mathbbm k G \to \mathbbm k G \otimes \mathbbm k G$, the counit $\varepsilon \colon \mathbbm k G \to \mathbbm k G$
	and the antipode $S \colon \mathbbm k G \to \mathbbm k G$ are the linear maps defined on the elements of the standard basis of  $\mathbbm k G$ as follows: $\Delta g := g\otimes g$, $\varepsilon(g) := 1$, $Sg := g^{-1}$ for all $g\in G$.
	Then a $\mathbbm k G$-module algebra is precisely an algebra on which the group $G$ is acting by automorphisms.
\end{example}

\begin{example} Let $\mathfrak g$ be a Lie algebra over a field $\mathbbm k$. Recall that the universal enveloping algebra $U(\mathfrak g)$ of $\mathfrak g$ is a Hopf algebra where the comultiplication $\Delta \colon U(\mathfrak g) \to U(\mathfrak g) \otimes U(\mathfrak g)$ and the counit $\varepsilon \colon U(\mathfrak g) \to U(\mathfrak g)$
	are the unital algebra homomorphisms
	and the antipode $S \colon U(\mathfrak g) \to U(\mathfrak g)$
	is the unital algebra anti-homomorphism defined on the standard generators of $U(\mathfrak g)$ as follows: $\Delta v := v\otimes 1 + 1 \otimes v$, $\varepsilon(v) := 0$, $Sv := -v$ for all $v\in \mathfrak g$.
	Then a $U(\mathfrak g)$-module algebra is precisely an algebra on which the Lie algebra $\mathfrak g$ is acting by derivations.
\end{example}

Again, let $A$ be a (not necessarily associative) algebra and let $H$ be a Hopf algebra over a field~$\mathbbm{k}$.
A $\mathbbm{k}$-linear map $\rho \colon A \to A \otimes H$ is called an \textit{$H$-coaction} on $A$
if $\rho$ defines on $A$ a structure of a right $H$-comodule
and $$\rho(ab)=  a_{(0)} b_{(0)} \otimes a_{(1)} b_{(1)}$$
for all $a,b \in A$ where we use Sweedler's notation $\rho(a)=a_{(0)}\otimes a_{(1)}$
and the summation sign is again omitted. In other words,
$\rho \colon A \to A \otimes H$ is an algebra homomorphism.
 The algebra $A$ endowed with $\rho$
is called an \textit{$H$-comodule algebra}. If there exists
an identity element $1_A \in A$ such that $\rho(1_A)=1_A\otimes 1_{H}$, then $A$ and $\rho$
 will be called a \textit{unital} $H$-comodule algebra  and a \textit{unital} $H$-coaction, respectively.

\begin{example} Let  $\Gamma \colon A=\bigoplus_{g \in G} A^{(g)}$ be a grading on an algebra $A$ over a field $\mathbbm k$ by a group $G$.
	Define the linear map $\rho \colon A \to A \otimes \mathbbm k G$ by
	$\rho(a):= a \otimes g$ for all $a\in A^{(g)}$ and $g\in G$. Then $\rho$ is a $\mathbbm k G$-coaction on $A$.
	Conversely, if $\rho \colon A \to A \otimes \mathbbm k G$ is a $\mathbbm k G$-coaction on some algebra $A$, then $\Gamma \colon A=\bigoplus_{g \in G} A^{(g)}$ is a $G$-grading where $A^{(g)} := \lbrace a\in A
	\mid \rho(a)= a \otimes g  \rbrace$ for all $g\in G$.
\end{example}
\begin{example}  Recall that the algebra $\mathcal O (G)$ of regular functions
	on an affine algebraic group $G$  over a field $\mathbbm k$ is a Hopf algebra where the unital algebra homomorphisms $\Delta \colon \mathcal O (G) \to \mathcal O (G) \otimes \mathcal O (G)$ and $S \colon \mathcal O (G) \to \mathcal O (G)$ correspond to the multiplication $G \times G \to G$
	and taking the inverse $(-)^{-1} \colon G \to G$, respectively. The counit $\varepsilon \colon \mathcal O (G) \to \mathbbm k$
	is defined by $\varepsilon(f):=f(1_G)$ for all $f\in \mathcal O (G)$.
	Suppose  $G \times X \to X$ is a $G$-action on an affine algebraic variety $X$ by morphisms.
	Then the corresponding unital algebra homomorphism $\rho \colon \mathcal O (X) \to \mathcal O (X) \otimes \mathcal O (G)$
	defines on the algebra $\mathcal O (X)$ of regular functions on $X$ a structure of an $\mathcal  O (G)$-comodule algebra.
	Conversely, every coaction $\rho \colon \mathcal O (X) \to \mathcal O (X) \otimes \mathcal O (G)$
	corresponds to a $G$-action on $X$ by morphisms, see the details, e.g., in~\cite[Chapter~4]{Abe}.
\end{example}

Hopf algebra (co)actions on an algebra $A$ are called \textit{quantum symmetries} of $A$.
If $A$ is a unital associative algebra, then we assume (co)actions to be unital too.
 If $X$ is an affine algebraic variety, then quantum symmetries of $\mathcal O(X)$ are called 
\textit{quantum symmetries} of $X$.

\subsection{Equivalence of (co)actions}

Let $\rho \colon A \to B \otimes Q$ be a linear map for some vector spaces $A,B,Q$ over a field $\mathbbm k$.
Define the map $\rho^\vee \colon Q^* \otimes A \to B $
by $\rho^\vee(\lambda \otimes a):= \sum\limits_{i=1}^k\lambda(q_i)b_i$
for all $a\in A$ and $\lambda \in Q^*$
where $\rho(a)=  \sum\limits_{i=1}^k b_i \otimes q_i$, $k\in\mathbb Z_+$, $b_i \in B$, $q_i \in Q$.

Let $\psi \colon H \otimes A \to A$ be an $H$-action for some Hopf algebra $H$ and an algebra $A$ over a field~$\mathbbm k$.
Then the unital subalgebra $\cosupp \psi := \lbrace \psi(h \otimes (-)) \mid h \in H \rbrace \subseteq \End_{\mathbbm k}(A)$ is called  the \textit{cosupport} of $\psi$.

Let $\rho \colon A \to A \otimes H$ be an $H$-coaction for some Hopf algebra $H$ and an algebra $A$ over a field~$\mathbbm k$.
Then the unital subalgebra $\cosupp \rho := \lbrace \rho^\vee(\lambda \otimes (-)) \mid \lambda \in H^* \rbrace \subseteq \End_{\mathbbm k}(A)$ is called  the \textit{cosupport} of $\rho$.

\begin{example}
	If $\Gamma \colon A=\bigoplus_{g \in G} A^{(g)}$ is a grading on an algebra $A$ by a group $G$
	and $\rho \colon A \to A \otimes \mathbbm k G$ is the corresponding $\mathbbm k G$-coaction,
	then $\cosupp \rho$ consists of all linear operators $A\to A$ that are scalar on each graded component $A^{(g)}$.
	Hence $\cosupp \rho$ is isomorphic to the algebra of all functions $\supp G \to \mathbbm k$ with the pointwise operations.
\end{example}

\begin{definition} Let $\psi_i \colon H_i \otimes A_i \to A_i$ be $H_i$-actions for some Hopf algebras $H_i$ and algebras $A_i$ over a field~$\mathbbm k$. We say that an algebra isomorphism
	$\varphi \colon A_1 \mathrel{\widetilde\to} A_2$ is an \textit{equivalence} of $\psi_1$ and $\psi_2$  if
	$\tilde\varphi \left( \cosupp \psi_1 \right) =  \cosupp \psi_2$
	where the isomorphism $\tilde \varphi \colon \End_{\mathbbm k}(A_1) \mathrel{\widetilde\to} \End_{\mathbbm k}(A_2)$
	is defined by $\tilde \varphi(f) := 
	\varphi f \varphi^{-1}$ for all $f\in \End_{\mathbbm k}(A_1)$.
\end{definition}
 
\begin{definition} Let $\rho_i \colon A_i \to A_i \otimes H_i$ be $H_i$-coactions for some Hopf algebras $H_i$ and algebras $A_i$ over a field~$\mathbbm k$. We say that an algebra isomorphism
	$\varphi \colon A_1 \mathrel{\widetilde\to} A_2$ is an \textit{equivalence} of $\rho_1$ and $\rho_2$  if
	$\tilde\varphi \left( \cosupp \rho_1 \right) =  \cosupp \rho_2$.
\end{definition}

If $\varphi \colon A_1 \mathrel{\widetilde\to} A_2$ is an equivalence of some (co)actions, then we will call the (co)actions \textit{equivalent} via $\varphi$. Again, we can always identify $A_1$ and $A_2$ and assume that $\varphi$ is the identity map.

It turns out that the notion of equivalence introduced above is indeed a generalization of the corresponding notion for gradings:
\begin{theorem}[{\cite[Theorem~3.7]{ASGordienko20ALAgoreJVercruysse}}]\label{TheoremGradEquivHopfEquiv}
	Let $\Gamma_1 \colon A_1=\bigoplus\limits_{g \in G_1} A_1^{(g)}$ and $\Gamma_2
	\colon A_2=\bigoplus\limits_{g \in G_2} A_2^{(g)}$ be two gradings where $G_1$
	and $G_2$ are groups and $A_1$ and $A_2$ are algebras over a field $\mathbbm{k}$.
	Denote by $\rho_i \colon A_i \to A_i \otimes \mathbbm{k} G_i$ the corresponding $\mathbbm{k} G_i$-coactions.
	Then an algebra isomorphism $\varphi \colon A_1 \mathrel{\widetilde\to} A_2$ is an equivalence of $\rho_1$ and $\rho_2$  if
	and only if $\varphi$ is an equivalence of $\Gamma_1$ and $\Gamma_2$.
\end{theorem}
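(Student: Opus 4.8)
The plan is to reconstruct the homogeneous components of each grading intrinsically from the corresponding cosupport algebra, and thereby translate the two notions of equivalence into one another. First I would recall from the example preceding the statement that $\cosupp \rho_i$ is the unital subalgebra of $\End_{\mathbbm k}(A_i)$ consisting of all operators acting as a scalar on each homogeneous component $A_i^{(g)}$, isomorphic to the algebra $\mathbbm k^{\supp \Gamma_i}$ of all $\mathbbm k$-valued functions on $\supp \Gamma_i$ under pointwise operations. Under this isomorphism the projection $\pi_g^{(i)} \colon A_i \to A_i$ onto $A_i^{(g)}$ along the remaining components corresponds to the indicator of $\{g\}$; in particular $\pi_g^{(i)} = \rho_i^\vee(\lambda_g \otimes (-)) \in \cosupp \rho_i$, where $\lambda_g \in (\mathbbm k G_i)^*$ is evaluation at $g$. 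The key lemma I would isolate is that for an arbitrary (possibly infinite) set $S$ the primitive idempotents of $\mathbbm k^{S}$ are exactly the indicators of singletons: every idempotent is the indicator of a subset, and it is primitive iff that subset is a single point. Consequently the primitive idempotents of $\cosupp \rho_i$ are precisely the projections $\pi_g^{(i)}$, $g \in \supp \Gamma_i$, and $A_i^{(g)} = \im \pi_g^{(i)}$.

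For the implication from gradings to coactions, I would assume $\varphi$ is an equivalence of $\Gamma_1$ and $\Gamma_2$, giving for each $g \in \supp \Gamma_1$ some $\sigma(g) \in \supp \Gamma_2$ with $\varphi(A_1^{(g)}) = A_2^{(\sigma(g))}$. Taking $f \in \cosupp \rho_1$ that acts as $\mu_g$ on $A_1^{(g)}$, and writing an arbitrary element of $A_2^{(\sigma(g))}$ as $\varphi(a)$ with $a \in A_1^{(g)}$, a one-line computation gives $\tilde\varphi(f)(\varphi(a)) = \varphi(\mu_g a) = \mu_g \varphi(a)$, so $\tilde\varphi(f)$ is scalar on each homogeneous component of $A_2$ and hence lies in $\cosupp \rho_2$. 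This yields $\tilde\varphi(\cosupp \rho_1) \subseteq \cosupp \rho_2$, and the same argument applied to $\varphi^{-1}$ gives the reverse inclusion, hence equality.

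For the converse I would use that $\tilde\varphi$, being conjugation by an algebra isomorphism, is itself an algebra isomorphism $\End_{\mathbbm k}(A_1) \to \End_{\mathbbm k}(A_2)$ which by hypothesis restricts to an algebra isomorphism $\cosupp \rho_1 \mathrel{\widetilde\to} \cosupp \rho_2$. Such an isomorphism preserves idempotents together with the order $e \leqslant f \Leftrightarrow ef = fe = e$, hence sends primitive idempotents bijectively to primitive idempotents. By the key lemma this furnishes a bijection $\sigma \colon \supp \Gamma_1 \to \supp \Gamma_2$ with $\tilde\varphi(\pi_g^{(1)}) = \pi_{\sigma(g)}^{(2)}$, and taking images (using that $\varphi$ is bijective) gives $\varphi(A_1^{(g)}) = \im(\varphi \pi_g^{(1)} \varphi^{-1}) = \im \pi_{\sigma(g)}^{(2)} = A_2^{(\sigma(g))}$ for every $g \in \supp \Gamma_1$, which is exactly the defining condition for $\varphi$ to be an equivalence of $\Gamma_1$ and $\Gamma_2$.

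I expect the converse to be the main obstacle, and within it the essential point is the reconstruction of the homogeneous components as images of the primitive idempotents of the cosupport. The delicate aspect is that when $\supp \Gamma_i$ is infinite the projections $\pi_g^{(i)}$ need not sum to the identity inside the algebra, so one cannot decompose $\id$ into finitely many orthogonal idempotents; the argument instead rests on the purely algebraic fact that the primitive idempotents of $\mathbbm k^{S}$ are the point indicators for every set $S$, which keeps the proof uniform in the possibly infinite-dimensional setting.
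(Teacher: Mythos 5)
The paper does not prove this theorem; it is imported verbatim as \cite[Theorem~3.7]{ASGordienko20ALAgoreJVercruysse}, so there is no in-paper proof to compare yours against. Judged on its own, your argument is correct and complete. The identification $\cosupp \rho_i \cong \mathbbm{k}^{\supp\Gamma_i}$ is exactly the example the paper records just before the definitions of equivalence, and your key lemma --- that the primitive idempotents of $\mathbbm{k}^{S}$ are precisely the point indicators, so that the homogeneous components are recovered as the images of the primitive idempotents of the cosupport --- is the right structural core; it correctly sidesteps the issue that for infinite support $\id_{A_i}$ is not a finite orthogonal sum of the projections $\pi^{(i)}_g$. Two minor points are worth making explicit. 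In the forward direction, concluding that $\tilde\varphi(f)$ is scalar on \emph{every} homogeneous component of $A_2$ (not just those of the form $\varphi\bigl(A_1^{(g)}\bigr)$) uses that $g\mapsto\sigma(g)$ is onto $\supp\Gamma_2$; this follows by comparing the two direct sum decompositions $A_2=\bigoplus_{g}\varphi\bigl(A_1^{(g)}\bigr)=\bigoplus_{h}A_2^{(h)}$ and is the content of the bijection-of-supports remark in Section~\ref{SectionGroupGradings}, and it is also what legitimizes running the same argument for $\varphi^{-1}$. In the converse, the appeal to the partial order on idempotents is an unnecessary detour: a unital algebra isomorphism preserves orthogonal decompositions of idempotents in both directions, which already gives that primitivity is preserved.
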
	

\subsection{$V$-universal (co)acting Hopf algebras}

Let $A$ be an algebra over a field $\mathbbm k$ and let $V \subseteq \End_{\mathbbm k}(A)$ be some unital
subalgebra.

\begin{definition}
A pair $\left({}_\square \mathcal H(A,V), \psi^{\mathbf{Hopf}}_{A,V}\right)$
is called a \textit{$V$-universal acting Hopf algebra} if ${}_\square \mathcal H(A,V)$ is a Hopf algebra, $\psi^{\mathbf{Hopf}}_{A,V} \colon {}_\square \mathcal H(A,V) \otimes A \to A$ is an ${}_\square \mathcal H(A,V)$-action,
$\cosupp \psi^{\mathbf{Hopf}}_{A,V} \subseteq V$ and for every $H$-action $\psi \colon H \otimes A \to A$,
where $H$ is a Hopf algebra and $\cosupp \psi \subseteq V$, there exists a unique Hopf algebra homomorphism $\varphi$
making the diagram below commutative:
$$\xymatrix{ H \otimes A \ar[r]^(0.6){\psi} \ar@{-->}[d]_{\varphi \otimes \id_A} & A \\
	{}_\square \mathcal{H}(A,V) \otimes A  \ar[ru]_(0.6){\psi^{\mathbf{Hopf}}_{A,V}}  }.$$ 
\end{definition}

The $V$-universal acting Hopf algebra exists for every $V \subseteq \End_{\mathbbm k}(A)$, see~\cite[Section~4.1]{ASGordienko21ALAgoreJVercruysse}.

Let $\psi \colon H \otimes A \to A$ be a Hopf algebra action. Then ${}_\square \mathcal{H}(A,\cosupp \psi)$ is called the \textit{universal Hopf algebra} of the action $\psi$.

\begin{definition}
	A pair $\left(\mathcal H^\square(A,V), \rho^{\mathbf{Hopf}}_{A,V}\right)$
	is called a \textit{$V$-universal coacting Hopf algebra} if $\mathcal H^\square(A,V)$ is a Hopf algebra, $\rho^{\mathbf{Hopf}}_{A,V} \colon A \to A \otimes \mathcal{H}^\square(A,V)$ is an $\mathcal H^\square(A,V)$-coaction,
	$\cosupp \rho^{\mathbf{Hopf}}_{A,V} \subseteq V$ and for every $H$-coaction $\rho \colon A \to A \otimes H$,
	where $H$ is a Hopf algebra and $\cosupp \rho \subseteq V$, there exists a unique Hopf algebra homomorphism $\varphi$
	making the diagram below commutative:
	$$\xymatrix{ A \ar[rd]_\rho \ar[r]^(0.3){\rho^{\mathbf{Hopf}}_{A,V}} & A \otimes \mathcal{H}^\square(A,V)
		\ar@{-->}[d]^{\id_A \otimes \varphi} \\
		& A \otimes H }$$ 
\end{definition}

Let $U$ and $W$ be $\mathbbm k$-vector spaces. Recall that the \textit{finite topology} on $\Hom_{\mathbbm k}(U,W)$
is the topology defined using the base $$\lbrace f \in \Hom_{\mathbbm k}(U,W) \mid f(u_j)=w_j
\text{ for all } 1\leqslant j\leqslant m  \rbrace$$ where $u_j \in U$, $w_j \in W$, $1\leqslant j \leqslant m$,
$m\in\mathbb Z_+$.

The $V$-universal coacting Hopf algebra $\mathcal H^\square(A,V)$ exists for every $V \subseteq \End_{\mathbbm k}(A)$
\textit{pointwise finite dimensional}, i.e. such that $\dim\lbrace f(a) \mid f\in  V \rbrace <+\infty$
for every $a\in A$, and closed in the finite topology~\cite[Theorem~4.10]{ASGordienko21ALAgoreJVercruysse}.
 If $A$ is finite dimensional, $\mathcal H^\square(A,V)$ exists for every $V \subseteq \End_{\mathbbm k}(A)$ since in this case all subspaces of $\End_{\mathbbm k}(A)$ are pointwise finite dimensional and closed in the finite topology.

\begin{remark}\label{RemarkUniversalPropertyOfACoaction}
Let $\rho \colon A \to A \otimes H$ be a Hopf algebra coaction. Then $\mathcal H^\square(A,\cosupp \rho)$ is called the \textit{universal Hopf algebra} of the coaction $\rho$.
By~\cite[Theorem~2.11]{ASGordienko21ALAgoreJVercruysse}, $\cosupp \rho$ is pointwise finite dimensional and closed in the finite topology, i.e. $\mathcal H^\square(A,\cosupp \rho)$ always exists.
Moreover, by the universal property of its coaction $\rho^{\mathbf{Hopf}}_{A,\cosupp \rho} \colon A \to A \otimes \mathcal{H}^\square(A,\cosupp \rho)$, we have $\cosupp \rho^{\mathbf{Hopf}}_{A,\cosupp \rho} = \cosupp \rho$.
 Therefore, in order to prove that a Hopf algebra $H_0$ together with its coaction  $\rho_0 \colon A \to A \otimes H_0$,
 where $\cosupp \rho_0 = \cosupp \rho$, is a universal Hopf algebra of $\rho$, it is sufficient to check that there exists a unique Hopf algebra homomorphism
 $\varphi \colon H_0 \to H_1$ such that $\rho_1 = (\id_A \otimes \varphi)\rho_0$
 for all Hopf algebra coactions $\rho_1 \colon A \to A \otimes H_1$ where $\cosupp \rho_1=\cosupp \rho$.
 The universal properties of both $H_0$ and $\mathcal H^\square(A,\cosupp \rho)$ then would imply that there exists a Hopf algebra isomorphism of $H_0$ and $\mathcal H^\square(A,\cosupp \rho)$ compatible with their coactions,
 and $\rho_0$ is universal not only among $\rho_1$ with $\cosupp \rho_1=\cosupp \rho$, but among
 $\rho_1$ with $\cosupp \rho_1 \subseteq \cosupp \rho$ too.
\end{remark}

Let $A$ be a unital algebra over a field $\mathbbm k$. Denote by $V_1(A)$ the unital subalgebra of 
$ \End_{\mathbbm k}(A)$ that consists of all $\mathbbm k$-linear operators $A \to A$ for which $1_A$ is an eigenvector.

Proposition~\ref{PropositionUnitalCoaction} below is dual to~\cite[Proposition~5.3]{ASGordienko20ALAgoreJVercruysse}:
\begin{proposition}\label{PropositionUnitalCoaction}
	Let $\rho \colon A \to A \otimes H$ be a coaction for a unital algebra $A$ and a Hopf algebra $H$
	over a field $\mathbbm k$
	such that $\cosupp \rho \subseteq V_1(A)$. Then $\rho$ is a unital coaction.
\end{proposition}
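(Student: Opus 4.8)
The plan is to show that the hypothesis $\cosupp \rho \subseteq V_1(A)$ forces $\rho(1_A)$ to have the special form $1_A \otimes h_0$ for a single element $h_0 \in H$, and then to pin down $h_0 = 1_H$ using the comodule axioms together with the multiplicativity of $\rho$. For the first and main step I would write $\rho(1_A) = \sum_{i=1}^n a_i \otimes h_i$ with the $h_i$ linearly independent in $H$. The condition $\cosupp \rho \subseteq V_1(A)$ means precisely that for every $\lambda \in H^*$ the operator $\rho^\vee(\lambda \otimes (-))$ has $1_A$ as an eigenvector, i.e. $\rho^\vee(\lambda \otimes 1_A) = \sum_i \lambda(h_i) a_i \in \mathbbm k \, 1_A$. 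Since the $h_i$ are linearly independent, for each $j$ there exists $\lambda_j \in H^*$ with $\lambda_j(h_i) = \delta_{ij}$ (extend $\{h_i\}$ to a basis and take coordinate functionals); evaluating $\rho^\vee(\lambda_j \otimes 1_A)$ gives $a_j \in \mathbbm k \, 1_A$. Hence every $a_i$ is a scalar multiple of $1_A$, and collecting the coefficients we obtain $\rho(1_A) = 1_A \otimes h_0$ for some $h_0 \in H$. This is the step that genuinely uses the hypothesis and is the only delicate point; everything afterwards is formal.

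Next I would extract counit and comultiplication information from the comodule structure. The counit axiom $(\id_A \otimes \varepsilon)\rho = \id_A$ applied to $1_A$ yields $\varepsilon(h_0)\, 1_A = 1_A$, so $\varepsilon(h_0) = 1$; in particular $h_0 \neq 0$. Coassociativity $(\rho \otimes \id_H)\rho = (\id_A \otimes \Delta)\rho$ applied to $1_A$ gives $1_A \otimes h_0 \otimes h_0 = 1_A \otimes \Delta(h_0)$, and cancelling the left-hand tensor factor (by applying any $f \in A^*$ with $f(1_A) = 1$) shows $\Delta(h_0) = h_0 \otimes h_0$, so $h_0$ is grouplike. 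A grouplike element of a Hopf algebra is invertible: the antipode axiom $m(S \otimes \id_H)\Delta = \eta\varepsilon$ evaluated at $h_0$ gives $S(h_0) h_0 = \varepsilon(h_0) 1_H = 1_H$, and symmetrically $h_0 S(h_0) = 1_H$.

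Finally I would invoke that $\rho$ is an algebra homomorphism. From $1_A = 1_A^2$ we get $\rho(1_A) = \rho(1_A)^2$, that is, $1_A \otimes h_0 = 1_A \otimes h_0^2$, whence $h_0^2 = h_0$. An idempotent that is also invertible must equal the identity, so $h_0 = 1_H$ and therefore $\rho(1_A) = 1_A \otimes 1_H$, meaning $\rho$ is a unital coaction. (If $A = 0$ the statement is vacuous.) The whole argument hinges on the separation-of-functionals step producing $h_0$; the rest is a routine unwinding of the Hopf and comodule axioms.
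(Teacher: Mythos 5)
Your proof is correct and follows essentially the same route as the paper's: use the eigenvector condition together with a separation-of-functionals argument to get $\rho(1_A)=1_A\otimes h_0$, deduce from coassociativity that $h_0$ is group-like (hence invertible), and from $\rho(1_A)=\rho(1_A)^2$ that $h_0^2=h_0$, forcing $h_0=1_H$. Your version is if anything slightly more explicit (e.g.\ in recording $\varepsilon(h_0)=1$ via the counit axiom), but the argument is the same.
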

\begin{proof} If $A=0$, then there is nothing to prove, so below we assume that $A\ne 0$ and therefore $1_A\ne 0$.
	
	Consider the decomposition of $\rho(1_A)$ with respect to bases of $A$ and $H$ where the basis of $A$ includes $1_A$. If any basis elements of $A$ different from $1_A$ were to appear in this decomposition, then, considering the linear functions $\lambda \in H^*$  dual to the basis elements of $H$, it would be possible to find some $\lambda \in H^*$
	such that $1_A$ would not be an eigenvector for $(\id_A \otimes \lambda)\rho$.
	Hence the condition~$\cosupp \rho \subseteq V_1(A)$ implies that $\rho(1_A)=1_A \otimes h$ for some $h\in H$. 
	Then $1_A \otimes \Delta h = (\rho \otimes \id_H)\rho(1_A)= 1_A \otimes h \otimes h$.
	By the counit axiom, $1_A = \varepsilon(h)1_A$, i.e. $\varepsilon(h)=1$ and $h\ne 0$.
	Thus $h$ is group-like and, in particular, invertible. At the same time 
	$$1_A \otimes h = \rho(1_A)=\rho(1_A^2)=1_A^2 \otimes h^2= 1_A \otimes h^2.$$
	Hence $h^2=h$ and $h=1_H$. Therefore, $\rho$ is a unital coaction.	
\end{proof}	

The proposition above implies that the $ V_1(A)$-universal coacting Hopf algebra $\mathcal H^\square(A):=\mathcal H^\square(A,V_1(A))$ (if it exists) is
universal among all unital coactions on $A$ and therefore coincides with
   \textit{Manin (ungraded) universal coacting Hopf algebra} $\underline{\mathrm{aut}}(A)$~\cite{Manin} (see also
   \cite[Corollary~2.8.5]{TheoRadMichVdB}).
In~\cite[Example~4.20]{ASGordienko21ALAgoreJVercruysse} the first example of an algebra $A$ was constructed such that $\mathcal H^\square(A)$ did not exist. In Theorem~\ref{TheoremYuIManinHopfAlgebraStraightLineDoesNotExist}
below we show that even $\mathcal H^\square(\mathbbm k[x])$ does not exist
where $\mathbbm k[x]$ is the algebra of polynomials in $x$, i.e. the algebra of regular functions on the straight line.

Theorem~\ref{TheoremUniversalHopfOfAGradingIsJustUniversalGroup} below relates $V$-universal coacting Hopf algebras
and universal groups of gradings:
\begin{theorem}[{\cite[Theorem~4.11]{ASGordienko20ALAgoreJVercruysse}}]\label{TheoremUniversalHopfOfAGradingIsJustUniversalGroup}
	Let $\Gamma \colon A = \bigoplus_{g\in G} A^{(g)}$ be a grading on an algebra $A$ by a group $G$. Denote by $\rho \colon A \to A \otimes \mathbbm kG$ the corresponding $\mathbbm kG$-coaction.
	Let $G_\Gamma$ be the universal group of $\Gamma$ and let $\rho_\Gamma \colon A \to A \otimes \mathbbm kG_\Gamma$
	be the corresponding $\mathbbm kG_\Gamma$-coaction.
	Then $\cosupp \rho_\Gamma = \cosupp \rho$ and
	 $(\mathbbm kG_\Gamma, \rho_\Gamma)$ is the $(\cosupp \rho)$-universal coacting Hopf algebra.
\end{theorem}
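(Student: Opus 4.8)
The plan is to prove the two assertions in turn, treating the equality of cosupports as a warm‑up and the universal property as the substance. Throughout, $V$ will denote $\cosupp\rho$, and the first assertion will simultaneously verify the requirement $\cosupp\rho_\Gamma\subseteq V$ in the definition of a $V$-universal coacting Hopf algebra.

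First I would establish $\cosupp\rho_\Gamma=\cosupp\rho$ by computing both sides explicitly. By the Example following the definition of cosupport, $\cosupp\rho$ consists precisely of the operators $a\mapsto(\id_A\otimes\lambda)\rho(a)$, $\lambda\in(\mathbbm kG)^*$, and for $a\in A^{(g)}$ this equals $\lambda(g)a$; since the distinct elements of $\supp\Gamma$ are linearly independent in $\mathbbm kG$, the functional $\lambda$ may take arbitrary values on them, so $\cosupp\rho$ is exactly the algebra of linear operators that act as an arbitrary scalar on each homogeneous component $A^{(g)}$. The same computation applied to $\rho_\Gamma$, using that $\varkappa_\Gamma$ is injective so that the elements $\varkappa_\Gamma(g)$ are linearly independent in $\mathbbm kG_\Gamma$, yields the identical algebra of operators, because the components $A^{(g)}=A^{(\varkappa_\Gamma(g))}$ are literally the same subspaces. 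Thus the two cosupports coincide.

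For the universal property I would take an arbitrary Hopf algebra coaction $\rho_1\colon A\to A\otimes H_1$ with $\cosupp\rho_1\subseteq\cosupp\rho$ and first extract its structure. The hypothesis means that for every $\lambda\in H_1^*$ the operator $(\id_A\otimes\lambda)\rho_1$ is scalar on each $A^{(g)}$. Fixing $g\in\supp\Gamma$ and $0\ne a\in A^{(g)}$, writing $\rho_1(a)=\sum_i a_i\otimes h_i$ with the $h_i$ linearly independent, and pairing against the dual functionals forces each $a_i\in\mathbbm ka$, so $\rho_1(a)=a\otimes h_g$ for a single element $h_g\in H_1$; comparing the values $\lambda(h_g)$ for all $\lambda$ shows $h_g$ is independent of the chosen $a$. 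Then the counit and coassociativity axioms of the comodule give $\varepsilon(h_g)=1$ and $\Delta h_g=h_g\otimes h_g$, so each $h_g$ is group-like, and in particular invertible, while the comodule‑algebra condition $\rho_1(ab)=\rho_1(a)\rho_1(b)$ applied to homogeneous $a,b$ yields $h_g h_{g'}=h_{gg'}$ whenever $A^{(g)}A^{(g')}\ne0$.

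These are exactly the defining relations of the universal group in the presentation $G_\Gamma\cong\mathcal F_{[\supp\Gamma]}/N$ recorded earlier, so the assignment $\varkappa_\Gamma(g)\mapsto h_g$ extends to a group homomorphism $G_\Gamma\to\mathcal G(H_1)$ into the group-likes of $H_1$, and hence to a Hopf algebra homomorphism $\varphi\colon\mathbbm kG_\Gamma\to H_1$. A direct check on each component, $(\id_A\otimes\varphi)\rho_\Gamma(a)=a\otimes\varphi(\varkappa_\Gamma(g))=a\otimes h_g=\rho_1(a)$, gives $\rho_1=(\id_A\otimes\varphi)\rho_\Gamma$; uniqueness follows because any such $\varphi$ must send group-likes to group-likes and is thus forced to equal $h_g$ on each generator $\varkappa_\Gamma(g)$, while $\supp\Gamma$ generates $G_\Gamma$. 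I expect the main obstacle to be the opening move of this argument, namely showing that the mere containment of cosupports rigidly forces $\rho_1$ to be ``diagonal,'' $\rho_1(a)=a\otimes h_g$ with one group-like $h_g$ per component, since this is where the hypothesis is fully exploited through a duality and linear-independence argument; once the $h_g$ are in hand, matching them to the presentation of $G_\Gamma$ and concluding is essentially bookkeeping. (Alternatively one could first invoke Remark~\ref{RemarkUniversalPropertyOfACoaction} to reduce to coactions $\rho_1$ with $\cosupp\rho_1=\cosupp\rho$, but the argument above uses nothing beyond the containment.)
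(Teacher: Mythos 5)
The paper states this theorem as a citation of \cite[Theorem~4.11]{ASGordienko20ALAgoreJVercruysse} and gives no proof of its own, so there is nothing internal to compare against; judged on its own terms, your argument is correct and complete, and it is the standard route: compute that both cosupports are the operators scalar on each homogeneous component, show that any coaction $\rho_1$ with $\cosupp\rho_1\subseteq\cosupp\rho$ must satisfy $\rho_1(a)=a\otimes h_g$ for $a\in A^{(g)}$ with the $h_g$ group-like and multiplicative on the support, and then match this data against the presentation $G_\Gamma\cong\mathcal F_{[\supp\Gamma]}/N$. One small imprecision in the uniqueness step: it is not the fact that $\varphi$ sends group-likes to group-likes that pins down $\varphi(\varkappa_\Gamma(g))$, but the commutativity requirement $\rho_1=(\id_A\otimes\varphi)\rho_\Gamma$ evaluated on a nonzero $a\in A^{(g)}$, which forces $\varphi(\varkappa_\Gamma(g))=h_g$; combined with the fact that $\varkappa_\Gamma(\supp\Gamma)$ generates $G_\Gamma$, this gives uniqueness, so the conclusion stands.
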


Given a unital associative algebra $A$ denote by $A^\circ$ its finite (or Sweedler) dual.
Recall that $A^\circ$ is the subspace of $A^*$ which consists of all linear functions
$\lambda \in A^*$ such that $\Ker \lambda \supseteq I$ for some two-sided ideal $I \subseteq A$, $\dim(A/I) < +\infty$.
($I$ depends on $\lambda$.)
Denote by $\varkappa_A \colon A^\circ \hookrightarrow A^*$ the corresponding embedding.
If $H$ is a Hopf algebra, then $H^\circ$ is a Hopf algebra too. If $H$ is finite dimensional,
then $H^\circ=H^*$.

An important role in the classification of quantum symmetries is played by the Duality Theorem below:
\begin{theorem}[{\cite[Theorem~4.15]{ASGordienko21ALAgoreJVercruysse}}]\label{TheoremUnivHopf(Co)actDuality}
	Let $A$ be an algebra over a field $\mathbbm k$ and let $V\subseteq \End_{\mathbbm k}(A)$ be a unital pointwise finite dimensional subalgebra  closed in the finite topology.
	Then $\bigl(\rho^\mathbf{Hopf}_{A,V}\bigr)^\vee(\varkappa_{\mathcal{H}^\square(A,V)} \otimes \id_A)
	\colon \mathcal{H}^\square(A,V)^\circ \otimes A \to A$
	is an $\mathcal{H}^\square(A,V)^\circ$-action and	
	the unique homomorphism $\theta^\mathbf{Hopf} \colon \mathcal{H}^\square(A,V)^\circ \to {}_\square \mathcal{H}(A,V) $ of Hopf algebras  making
	the diagram $$\xymatrix{ {}_\square \mathcal{H}(A,V) \otimes A \ar[rr]^(0.6){\psi^\mathbf{Hopf}_{A,V}} & 
		& A	\\
		\mathcal{H}^\square(A,V)^\circ \otimes A \ar[rr]^{\varkappa_{\mathcal{H}^\square(A,V)} \otimes \id_A}
		\ar@{-->}[u]^{\theta^\mathbf{Hopf} \otimes \id_A} & \qquad &  \mathcal{H}^\square(A,V)^* \otimes A \ar[u]_{\bigl(\rho^\mathbf{Hopf}_{A,V}\bigr)^\vee} } $$  commutative is an isomorphism.
\end{theorem}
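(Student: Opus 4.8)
The plan is to read the statement as an instance of Sweedler duality: the universal coacting Hopf algebra is initial among coactions with cosupport in $V$, while the universal acting Hopf algebra is terminal among such actions, and $(-)^\circ$ should interchange the two. Write $H:=\mathcal H^\square(A,V)$, $K:={}_\square\mathcal H(A,V)$, $\rho:=\rho^{\mathbf{Hopf}}_{A,V}\colon A\to A\otimes H$ and $\psi:=\psi^{\mathbf{Hopf}}_{A,V}\colon K\otimes A\to A$, and use the Sweedler notation $\rho(a)=a_{(0)}\otimes a_{(1)}$. First I would verify the opening assertion by setting $\lambda\cdot a:=\rho^\vee(\varkappa_H(\lambda)\otimes a)=a_{(0)}\lambda(a_{(1)})$ for $\lambda\in H^\circ$ and $a\in A$. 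Coassociativity of $\rho$ together with $\varkappa_H\colon H^\circ\hookrightarrow H^*$ being a homomorphism for the convolution product shows that this is a left $H^\circ$-module, while the counit axiom makes $1_{H^\circ}$ act as $\id_A$. The module-algebra identity $\lambda\cdot(ab)=(\lambda_{(1)}\cdot a)(\lambda_{(2)}\cdot b)$ is obtained by evaluating $\lambda$ on the $H$-component of $\rho(ab)=\rho(a)\rho(b)$, using that $\rho$ is an algebra homomorphism and that the comultiplication of $H^\circ$ is dual to the multiplication of $H$; when $A$ is unital, $\rho(1_A)=1_A\otimes 1_H$ gives $\lambda\cdot 1_A=\varepsilon_{H^\circ}(\lambda)1_A$.

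Next, the cosupport of this action is $\{\rho^\vee(\mu\otimes(-))\mid\mu\in\varkappa_H(H^\circ)\}\subseteq\{\rho^\vee(\mu\otimes(-))\mid\mu\in H^*\}=\cosupp\rho\subseteq V$, the last inclusion being the defining property of $H=\mathcal H^\square(A,V)$. Hence the universal property of $K={}_\square\mathcal H(A,V)$ applies to the $H^\circ$-action and yields a unique Hopf algebra homomorphism $\theta^{\mathbf{Hopf}}\colon H^\circ\to K$ with $\psi\circ(\theta^{\mathbf{Hopf}}\otimes\id_A)=\rho^\vee(\varkappa_H\otimes\id_A)$, which is precisely the map making the diagram of the statement commute. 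The whole problem thus reduces to showing that $\theta^{\mathbf{Hopf}}$ is invertible.

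To build an inverse I would dualize the universal action. For each $a\in A$ the subspace $\{k\cdot a\mid k\in K\}=\{f(a)\mid f\in\cosupp\psi\}$ is contained in $\{f(a)\mid f\in V\}$, which is finite dimensional because $V$ is pointwise finite dimensional; hence $A$ is a locally finite, and so rational, left $K$-module. Rationality furnishes a right $K^\circ$-comodule algebra structure $\rho_K\colon A\to A\otimes K^\circ$ recovering $\psi$, with $\cosupp\rho_K\subseteq V$ (its defining operators lie in the finite-topology closure of $\cosupp\psi\subseteq V$, and $V$ is closed). By the universal property of $H$ there is then a unique Hopf algebra homomorphism $\varphi\colon H\to K^\circ$ with $(\id_A\otimes\varphi)\rho=\rho_K$, and composing its finite dual with the canonical Hopf algebra map $j_K\colon K\to(K^\circ)^\circ$ gives the candidate inverse $\eta:=\varphi^\circ\, j_K\colon K\to H^\circ$.

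It then remains to check that both composites are the identity. Unwinding definitions, $\psi\bigl((\theta^{\mathbf{Hopf}}\eta)\otimes\id_A\bigr)(k\otimes a)=a_{(0)}\,\varphi(a_{(1)})(k)=\psi(k\otimes a)$ because $\rho_K=(\id_A\otimes\varphi)\rho$ recovers $\psi$; since $\id_K$ also makes the defining diagram of $K$ commute, the uniqueness clause forces $\theta^{\mathbf{Hopf}}\eta=\id_K$. For the other composite, the same computation shows that $\eta\theta^{\mathbf{Hopf}}(\lambda)$ and $\lambda$ take equal values on every matrix coefficient $a_{(1)}$, that is, on the coefficient coalgebra $C:=\mathbbm k\text{-span}\{a_{(1)}\mid a\in A\}$; since $\eta\theta^{\mathbf{Hopf}}$ is a coalgebra endomorphism of $H^\circ$ and $\mathcal H^\square(A,V)$ is generated as an algebra by $C$, this agreement propagates from $C$ to all of $H$, giving $\eta\theta^{\mathbf{Hopf}}=\id_{H^\circ}$. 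I expect the real obstacle to be the third paragraph: establishing that $A$ is a rational $K$-module so that $\rho_K$ exists, and checking that the chain of dualizations $(-)^\vee$, $(-)^\circ$ and $j_K$ keeps the cosupport inside $V$ and is mutually compatible. This is exactly where the hypotheses that $V$ be pointwise finite dimensional and closed in the finite topology are indispensable, and where the bookkeeping with the Sweedler dual must be carried out with care.
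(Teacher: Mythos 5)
This theorem is not proved in the paper at all: it is imported verbatim from \cite[Theorem~4.15]{ASGordienko21ALAgoreJVercruysse}, so there is no in-paper proof to compare your argument against. Judged on its own, your outline follows what is essentially the standard route to such a duality statement: define the $H^\circ$-action by $\lambda\cdot a = a_{(0)}\lambda(a_{(1)})$, get $\theta^{\mathbf{Hopf}}$ from the universal property of ${}_\square\mathcal H(A,V)$, build a candidate inverse by rationalizing the universal action (pointwise finite dimensionality of $V$ gives local finiteness, hence rationality, of $A$ as a $K$-module, and closedness of $V$ keeps $\cosupp\rho_K$ inside $V$), and then use the two universal properties to identify the composites. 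The direction $\theta^{\mathbf{Hopf}}\eta=\id_K$ is clean: both it and $\id_K$ intertwine $\psi^{\mathbf{Hopf}}_{A,V}$ with itself, so the uniqueness clause finishes it.

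The genuine gap is in your last step, $\eta\theta^{\mathbf{Hopf}}=\id_{H^\circ}$. Your computation only shows that $\eta\theta^{\mathbf{Hopf}}(\lambda)$ and $\lambda$ agree on the coefficient coalgebra $C=\operatorname{span}\{a_{(1)}\mid a\in A\}$, and you propagate this to all of $H=\mathcal H^\square(A,V)$ by asserting that $H$ is generated as an algebra by $C$. That assertion is false in general: the universal coacting Hopf algebra is generated by $C$ together with its iterated antipode images, not by $C$ alone. This is visible already in Theorem~\ref{TheoremTwoPointsCoactUniversal} of the present paper, where $\mathcal H^\square(\mathbbm k\times\mathbbm k,V_1(\mathbbm k\times\mathbbm k))=\mathbbm k\langle c,d,h\rangle/I$ has coefficient coalgebra spanned by $1,c,h$ but needs the extra generator $d=S(c)=c^{-1}$. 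The repair is routine but must be written out: the set $X=\{h\in H\mid \eta\theta^{\mathbf{Hopf}}(\lambda)(h)=\lambda(h)\ \forall\lambda\in H^\circ\}$ contains $1$ and $C$, is closed under multiplication because $\eta\theta^{\mathbf{Hopf}}$ is a coalgebra map (so $F(\lambda)(h_1h_2)=F(\lambda_{(1)})(h_1)F(\lambda_{(2)})(h_2)$), and is closed under $S_H$ because $\eta\theta^{\mathbf{Hopf}}$ commutes with the antipode $\lambda\mapsto\lambda\circ S_H$ of $H^\circ$; one then invokes the structural fact from the construction of $\mathcal H^\square(A,V)$ that $\bigcup_{n\geqslant 0}S^n(C)$ generates $H$ as an algebra. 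Without that closure argument and that generation fact, the final identity is not established.
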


Categorical versions of the definitions, existence and duality theorems mentioned in this section can be found in~\cite{ASGordienko24ALAgoreJVercruysse,ASGordienko25ALAgoreJVercruysse}.

\subsection{Outline of the classification strategy}\label{SectionClassificationOutline}
The Duality Theorem~\ref{TheoremUnivHopf(Co)actDuality} suggests using the following strategy in the classification of quantum symmetries.

If $\psi \colon H \otimes A \to A$ is a Hopf algebra action on a finite dimensional algebra $A$, then
it factorizes through the action $\psi^{\textbf{Hopf}}_{A,\cosupp \psi} $ of the universal Hopf algebra 
 ${}_\square \mathcal{H}(A,\cosupp \psi)$ of $\psi$. Hence $\cosupp \psi \subseteq  \cosupp \psi^{\textbf{Hopf}}_{A,\cosupp \psi}$. At the same time, the definition of  $\psi^{\textbf{Hopf}}_{A,V} $
 implies that $\cosupp \psi^{\textbf{Hopf}}_{A,V} \subseteq V$ for every unital subalgebra $V\subseteq \End_{\mathbbm k}(A)$. Therefore,
\begin{equation*}\begin{split}\cosupp \psi =  \cosupp \psi^{\textbf{Hopf}}_{A,\cosupp \psi} 
= \cosupp \bigl(\rho^\mathbf{Hopf}_{A,\cosupp \psi}\bigr)^\vee(\varkappa_{\mathcal{H}^\square(A,\cosupp \psi)} \otimes \id_A) \subseteq \\ \cosupp \rho^{\textbf{Hopf}}_{A,\cosupp \psi} \subseteq \cosupp \psi,\end{split}\end{equation*}
i.e. $\cosupp \rho^{\textbf{Hopf}}_{A,\cosupp \psi}= \cosupp \psi$
and there always exists a coaction with the same support as an action. Therefore,
when studying quantum symmetries on finite dimensional algebras, one should start with coactions. Another reason for this is that universal coacting Hopf algebras
are usually easier to describe (see~\cite[Theorem~4.8]{ASGordienko20ALAgoreJVercruysse} and~\cite[Theorem~3.16 and Section~4.2]{ASGordienko21ALAgoreJVercruysse}) than the universal acting Hopf algebras.

Hence, given a finite dimensional algebra $A$, we first determine all unital subalgebras
$V \subseteq \End_{\mathbbm k}(A)$ that are cosupports of Hopf algebra coactions.
For such $V$ the universal property of $\mathcal{H}^\square(A,V)$
implies that $\cosupp \rho^{\textbf{Hopf}}_{A,V} = V$.
 Then we try to find a Hopf algebra action $\psi \colon H \otimes A \to A$ such that
 $\cosupp \psi = V$. If such $\psi$ exists, then $\cosupp \psi^{\textbf{Hopf}}_{A,V} = V$
 and ${}_\square \mathcal{H}(A,V) \cong \mathcal{H}^\square(A,V)^\circ$.
 If such $\psi$ does not exist, then $\cosupp \psi^{\textbf{Hopf}}_{A,V} =: V_0 \subsetneqq V$
 and $$\psi^{\textbf{Hopf}}_{A,V} = \psi^{\textbf{Hopf}}_{A,V_0} = \bigl(\rho^\mathbf{Hopf}_{A,V_0}\bigr)^\vee(\varkappa_{\mathcal{H}^\square(A,V_0)} \otimes \id_A),$$
 i.e. the corresponding universal action appears for another choice of $V$.
 
 In Section~\ref{SectionTwoPoints} and~\ref{SectionDualNumbers} we give two examples where
 this strategy makes it possible to completely classify the quantum symmetries.

\section{Set of two points}\label{SectionTwoPoints}

The algebra of regular functions on the set of two points is isomorphic to the direct product $\mathbbm k \times \mathbbm k$
of two copies of the base field $\mathbbm k$.
 Note that $1_{\mathbbm k \times \mathbbm k} = (1,1)$. Let $e:=(1,0)$.

\begin{lemma}\label{LemmaTwoPointsCoactRelations} Let  $\rho \colon {\mathbbm k \times \mathbbm k} \to (\mathbbm k \times \mathbbm k) \otimes H$ be an arbitrary unital $H$-coaction where $H$ is a Hopf algebra. Define $c,h\in H$ by $\rho(e) =  1_{\mathbbm k \times \mathbbm k} \otimes h + e \otimes c$.
	Then $c$ is invertible, $$h^2=h,\quad hc+ch+c^2 = c,$$   $$\Delta c= c\otimes c,\quad \Delta h = h \otimes c + 1_H \otimes h,$$
	$$\varepsilon(c)=1,\quad \varepsilon(h) = 0,$$ $$Sc = c^{-1},\quad Sh = -hc^{-1}.$$
\end{lemma}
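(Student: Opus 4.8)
The plan is to exploit that $\{1_A, e\}$ is a basis of $A = \mathbbm k \times \mathbbm k$ and that $e$ is idempotent, $e^2 = e$, together with the fact that a coaction is by definition a unital algebra homomorphism satisfying the comodule axioms. Writing $\rho(e) = 1_A \otimes h + e \otimes c$ as in the statement, I would extract each relation by applying one structural identity at a time and comparing the coefficients of $1_A$ and of $e$; this comparison is legitimate precisely because $\{1_A, e\}$ is a basis, so that $A \otimes H = (1_A \otimes H) \oplus (e \otimes H)$ and the two components of any tensor are uniquely determined.

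First I would apply $\rho$ to the identity $e^2 = e$. Since $\rho$ is an algebra homomorphism, $\rho(e)^2 = \rho(e)$, and expanding $(1_A \otimes h + e \otimes c)^2$ in $A \otimes H$ using $1_A^2 = 1_A$ and $1_A e = e 1_A = e^2 = e$ produces $1_A \otimes h^2 + e \otimes (hc + ch + c^2)$. Matching the $1_A$- and $e$-components against $\rho(e) = 1_A \otimes h + e \otimes c$ gives $h^2 = h$ and $hc + ch + c^2 = c$; here one must keep $hc$ and $ch$ separate, as $H$ need not be commutative. Next, the counit axiom $(\id_A \otimes \varepsilon)\rho(e) = e$ yields $\varepsilon(h)\, 1_A + \varepsilon(c)\, e = e$, hence $\varepsilon(h) = 0$ and $\varepsilon(c) = 1$. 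Coassociativity $(\rho \otimes \id_H)\rho(e) = (\id_A \otimes \Delta)\rho(e)$, read off in the same basis (and using $\rho(1_A) = 1_A \otimes 1_H$ on the left-hand side), gives $\Delta c = c \otimes c$ from the $e$-component and $\Delta h = 1_H \otimes h + h \otimes c$ from the $1_A$-component.

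At this point $c$ is group-like, so it is invertible with $Sc = c^{-1}$ by the standard property of group-like elements in a Hopf algebra. Finally, to obtain $Sh$ I would feed $\Delta h = 1_H \otimes h + h \otimes c$ into the antipode axiom $m(S \otimes \id_H)\Delta h = \varepsilon(h)\, 1_H$, where $m$ denotes multiplication in $H$: this reads $S(1_H) h + (Sh)\, c = 0$, i.e. $h + (Sh)\, c = 0$, so $Sh = -h c^{-1}$ after right-multiplying by $c^{-1}$. (The companion identity $m(\id_H \otimes S)\Delta h = 0$ gives the same value as a consistency check.)

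The computations are entirely routine; the only points requiring care are the bookkeeping of tensor factors, the non-commutativity of $H$ (which is why $hc + ch + c^2 = c$ cannot be simplified further), and the fact that coefficient comparison is valid only because $\{1_A, e\}$ is a basis of $A$. I do not expect any substantial obstacle.
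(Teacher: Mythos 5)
Your proposal is correct and follows essentially the same route as the paper's proof: comparing components in the basis $\{1_{\mathbbm k\times\mathbbm k},e\}$ after applying multiplicativity to $e^2=e$, the comodule axioms, and then the antipode axiom $(Sh_{(1)})h_{(2)}=\varepsilon(h)1_H$ to derive $Sh=-hc^{-1}$. No gaps.
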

\begin{proof} By the unitality of $\rho$, we have $\rho(1_{\mathbbm k \times \mathbbm k})=1_{\mathbbm k \times \mathbbm k}\otimes 1_H$. 
	
	The equality $e^2=e$ implies that
	$$1_{\mathbbm k \times \mathbbm k} \otimes h + e \otimes c = \rho(e) = \rho\left(e^2 \right) = 
	\rho(e)^2=
	(1_{\mathbbm k \times \mathbbm k} \otimes h + e \otimes c)^2 = 1_{\mathbbm k \times \mathbbm k} \otimes h^2 + e \otimes (hc+ch+c^2),$$
	i.e.
	$h^2=h$ and $hc+ch+c^2 = c$.
	
	At the same time, from $(\id_{\mathbbm k \times \mathbbm k} \otimes \Delta)\rho = (\rho \otimes \id_H)\rho$
	it follows that
	$$1_{\mathbbm k \times \mathbbm k} \otimes \Delta h + e \otimes \Delta c = 1_{\mathbbm k \times \mathbbm k} \otimes 1_H \otimes h + 1_{\mathbbm k \times \mathbbm k} \otimes h \otimes c + e \otimes c \otimes c,$$
	i.e. $\Delta h = h \otimes c + 1_H \otimes h$, $\Delta c= c\otimes c$.
	
	Finally, by  the counitality of a comodule,
	$e = \varepsilon(h)1_{\mathbbm k \times \mathbbm k} + \varepsilon(c) e$,
	i.e. $\varepsilon(h) = 0$ and $\varepsilon(c)=1$.

Note that $c$ is a group-like element. Hence $c$ is invertible and $Sc = c^{-1}$. Moreover,
$$0=\varepsilon(h)1_H = (Sh_{(1)}) h_{(2)} = (Sh)c+h$$
implies $Sh = -hc^{-1}$.
\end{proof}

\begin{theorem}\label{TheoremTwoPointsCoactUniversal} Consider the free unital associative algebra $\mathbbm k \langle c,d, h \rangle$ over a field $\mathbbm k$ with
	free generators $c,d, h$, i.e. the algebra of polynomials in the non-commuting variables $c,d, h$.
	Denote by $I$ the ideal of $\mathbbm k \langle c,d, h \rangle$ generated by the elements
	$$cd-1,\quad dc-1,\quad h^2-h,\quad hc+ch+c^2 - c.$$
	Define $\Delta \colon \mathbbm k \langle c,d, h \rangle \to  \mathbbm k \langle c,d, h \rangle \otimes  \mathbbm k \langle c,d, h \rangle$ and $\varepsilon \colon \mathbbm k \langle c,d, h \rangle \to  \mathbbm k$
	as the unital algebra homomorphisms such that $$\Delta c= c\otimes c,\quad \Delta d= d\otimes d,\quad \Delta h = h \otimes c + 1 \otimes h,$$
	$$\varepsilon(c)=\varepsilon(d)=1,\quad \varepsilon(h) = 0.$$
	Define $S \colon \mathbbm k \langle c,d, h \rangle \to \mathbbm k \langle c,d, h \rangle$ 
	as the unital algebra anti-homomorphism such that $$Sc = d,\ Sd = c,\ Sh = -hd.$$
	Then $$S I \subseteq I,\quad \varepsilon(I)=0,\quad \Delta I\subseteq
	 I \otimes \mathbbm k \langle c,d, h \rangle +
	 \mathbbm k \langle c,d, h \rangle \otimes I$$ and the algebra $\mathbbm k \langle c,d, h \rangle/I$ with the induced maps $\Delta, \varepsilon, S$ is a Hopf algebra. Finally, $\mathbbm k \langle c,d, h \rangle/I=\mathcal H^\square({\mathbbm k \times \mathbbm k})$, the Manin universal coacting Hopf algebra  on ${\mathbbm k \times \mathbbm k}$.
\end{theorem}
\begin{proof} Since $\Delta$ and $\varepsilon$ are defined as unital algebra homomorphisms,
	it is sufficient to check the coassociativity and the counit axioms on the generators $c,d,h$.
	An easy verification shows that $\mathbbm k \langle c,d, h \rangle$ is a coalgebra and therefore a bialgebra.
	
	Note that $$\Delta(cd-1)=cd \otimes cd - 1 \otimes 1 = (cd -1)\otimes cd + 1 \otimes (cd-1) \in I \otimes \mathbbm k \langle c,d, h \rangle +
	\mathbbm k \langle c,d, h \rangle \otimes I.$$
	The same is true for $(dc-1)$ too.
	Moreover,
	\begin{equation*}\begin{split}\Delta(h^2-h)=h^2 \otimes c^2 + h\otimes ch + h\otimes hc + 1\otimes h^2 - h \otimes c - 1 \otimes h
	\\=(h^2-h)\otimes c^2 + h\otimes(hc+ch+c^2-c) + 1\otimes(h^2-h) \in I \otimes \mathbbm k \langle c,d, h \rangle +
	\mathbbm k \langle c,d, h \rangle \otimes I,\end{split}\end{equation*}
	\begin{equation*}\begin{split}\Delta(hc+ch+c^2-c)=hc \otimes c^2 + c\otimes hc  + ch \otimes c^2 + c\otimes ch  + c^2 \otimes c^2 - c \otimes c
		\\= (hc+ch+c^2-c)\otimes c^2 + c\otimes (hc+ch+c^2-c) \in I \otimes \mathbbm k \langle c,d, h \rangle +
		\mathbbm k \langle c,d, h \rangle \otimes I.\end{split}\end{equation*}
Hence $I$ is a biideal and  $\mathbbm k \langle c,d, h \rangle/I$ is a bialgebra.

Note that since $S$ is an anti-homomorphism, in order to prove that $SI\subseteq I$,
it is sufficient to verify that the images under $S$ of generators of the ideal $I$  again belong to $I$.

The latter is indeed true. First, $S(cd-1)=dc-1 \in I$, $S(dc-1)=cd-1 \in I$.
Moreover, $$dh+hd-d+1=d(hc+ch+c^2-c)d+dh(1-cd)+(1-dc)hd+dc(1-cd)+(1-dc)-d(1-cd) \in I.$$
Hence
 $$S(h^2-h)=hdhd+hd = h(dh+hd-d+1)d-(h^2-h)d^2 \in I.$$
 Finally,
 $$S(hc+ch+c^2-c)=-dhd-hd^2 + d^2 - d = -(dh+hd-d+1)d \in I.$$
 Therefore, $SI \subseteq I$ and $S$ is well defined on $\mathbbm k \langle c,d, h \rangle/I$.
 An explicit verification shows that $S$ is indeed the antipode and $\mathbbm k \langle c,d, h \rangle/I$ is a Hopf algebra.
 
 Define the unital homomorphism $$\rho \colon {\mathbbm k \times \mathbbm k} \to (\mathbbm k \times \mathbbm k) \otimes \mathbbm k \langle c,d, h \rangle/I$$ such that $\rho(e):= 1_{\mathbbm k \times \mathbbm k} \otimes \bar h + e\otimes \bar c$
 where $\bar a := a + I$ for all $a\in \mathbbm k \langle c,d, h \rangle$.
 Then $\rho$ is a unital coaction. Lemma~\ref{LemmaTwoPointsCoactRelations} implies that $\rho$ is universal among all unital coactions on ${\mathbbm k \times \mathbbm k}$.\end{proof}	

Recall that the \textit{Sweedler algebra} $H_4$ over a field $\mathbbm k$ is the Hopf algebra that is generated as a unital algebra by elements $c$ and $v$ where $c^2=1$, $v^2=0$, $vc=-cv$, $H_4 = \langle 1, c, v, cv \rangle_{\mathbbm k}$, $Sc=c$, $Sv=-cv$, $\Delta c = c\otimes c$, $\Delta v = c\otimes v + v\otimes 1$,
$\varepsilon(c)=1$, $\varepsilon(v)=0$. Note that if $\ch \mathbbm k = 2$, then $-1=1$ is no longer a primitive root of unity, but still $(c\otimes v + v\otimes 1)^2=0$, i.e. the Sweedler algebra is defined for any $\ch \mathbbm k$.

\begin{theorem}\label{TheoremTwoPointsCoactUpperTriangularFinDim} Let $\mathbbm k \langle c,d, h \rangle$, $I$, $\Delta$, $\varepsilon$, $S$ be the same
	as in Theorem~\ref{TheoremTwoPointsCoactUniversal}.
	Denote by $I_1$ the ideal of $\mathbbm k \langle c,d, h \rangle$ generated by the element
	$c^2-1$. Then $\mathbbm k \langle c,d, h \rangle/(I+I_1)$ is a four-dimensional Hopf algebra
	with the basis $\bar 1, \bar c, \bar h, \bar c \bar  h$. Furthermore, $\mathbbm k \langle c,d, h \rangle/(I+I_1)$ is isomorphic to the Sweedler algebra $H_4$ if $\ch \mathbbm k \ne 2$.
\end{theorem}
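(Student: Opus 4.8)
\section*{Proof proposal}

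The plan is to proceed in three stages: establish that $I+I_1$ is a Hopf ideal, then pin down the dimension and basis of the quotient via a rewriting (Diamond Lemma) argument, and finally identify the quotient with $H_4$ when $\ch\mathbbm k\neq 2$. First I would verify that $I+I_1$ is a Hopf ideal. Since $I$ is already a Hopf ideal by Theorem~\ref{TheoremTwoPointsCoactUniversal}, it suffices to test the new generator $c^2-1$ against $\Delta,\varepsilon,S$. One has $\varepsilon(c^2-1)=0$ and
\[\Delta(c^2-1)=(c^2-1)\otimes c^2+1\otimes(c^2-1)\in I_1\otimes\mathbbm k\langle c,d,h\rangle+\mathbbm k\langle c,d,h\rangle\otimes I_1,\]
while $S(c^2-1)=d^2-1$; the latter lies in $I+I_1$ because modulo $I+I_1$ the relations $cd-1,dc-1$ force $\bar d=\bar c^{-1}$ and $c^2-1$ forces $\bar c^{\,2}=1$, whence $\bar d^{\,2}=1$. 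As $\Delta,\varepsilon,S$ are (anti)homomorphisms, the ideal properties propagate from the generator to all of $I_1$, so $I+I_1$ is a Hopf ideal and $\mathbbm k\langle c,d,h\rangle/(I+I_1)$ is a Hopf algebra.

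The core of the argument is the dimension count. Modulo $I+I_1$ one has $\bar d=\bar c$ (from $dc\equiv 1$ and $c^2\equiv 1$), so the quotient is generated by $\bar c,\bar h$ subject to $c^2=1$, $h^2=h$ and $hc+ch=c-1$. I would run Bergman's Diamond Lemma on $\mathbbm k\langle c,h\rangle$ with the reduction rules
\[c^2\to 1,\qquad h^2\to h,\qquad hc\to c-1-ch,\]
ordered by the degree-lexicographic order with $h>c$, under which every right-hand side is strictly smaller than its left-hand side. The overlap ambiguities are $ccc$, $hhh$, $hhc$ and $hcc$, and a direct check shows each resolves, the two genuinely nontrivial cases being $hhc$ and $hcc$, where one repeatedly uses $h^2=h$ together with $c^2=1$ to collapse both reduction paths (to $c-1-ch$ and to $h$, respectively). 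The Diamond Lemma then yields that the irreducible monomials — those containing no factor $cc$, $hh$ or $hc$, i.e.\ exactly $1,c,h,ch$ — form a basis, so the Hopf algebra is four-dimensional with basis $\bar 1,\bar c,\bar h,\bar c\bar h$; this argument is characteristic-free.

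Finally, for $\ch\mathbbm k\neq 2$ I would identify the quotient with $H_4$ via the assignment $c\mapsto c$, $d\mapsto c$, $h\mapsto\tfrac12(1-c)+cv$. A short computation shows $\tfrac12(1-c)+cv$ is idempotent and satisfies $hc+ch=c-1$ in $H_4$, so the assignment respects all defining relations; since $\tfrac12(1-c)+cv$ is $(1,c)$-skew-primitive, it matches $\Delta h=h\otimes c+1\otimes h$ (and $\varepsilon$ is visibly preserved), making the map a morphism of Hopf algebras. Its image contains $c$ and $cv=\bigl(\tfrac12(1-c)+cv\bigr)-\tfrac12+\tfrac12 c$, hence $v=c\cdot cv$, so the map is surjective; being a surjection between two four-dimensional spaces, it is an isomorphism. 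The factor $\tfrac12$ is precisely what is unavailable in characteristic $2$, which is why the identification with $H_4$ is asserted only for $\ch\mathbbm k\neq 2$.

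I expect the confluence verification in the Diamond Lemma step to be the main obstacle: the rule $hc\to c-1-ch$ does not lower word length, so the choice of term order is essential, and the ambiguities $hhc$ and $hcc$ require carefully tracking how the relations $c^2=1$ and $h^2=h$ interact before the competing reduction paths agree.
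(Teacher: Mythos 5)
Your proposal is correct, and it reaches the same conclusions as the paper, but the key step --- proving that $\bar 1, \bar c, \bar h, \bar c\bar h$ are linearly independent --- is handled by a genuinely different method. The paper exhibits an explicit four-dimensional associative algebra $\mathbbm k e_1 \oplus \mathbbm k e_2 \oplus \langle a,b\rangle_{\mathbbm k}$ (two orthogonal idempotents plus a square-zero bimodule) containing elements $c_1 = a+b+e_1-e_2$ and $h_1 = e_2$ that satisfy the defining relations; the resulting surjection onto a four-dimensional algebra forces the four monomials to be independent. You instead eliminate $d$ (correctly, via $\bar d = \bar d\bar c^2 = \bar c$) and run Bergman's Diamond Lemma on $\mathbbm k\langle c,h\rangle$ with the rewriting system $c^2\to 1$, $h^2\to h$, $hc\to c-1-ch$ under deg-lex with $h>c$; I checked the two nontrivial ambiguities $hhc$ and $hcc$ and they do resolve to $c-1-ch$ and $h$ respectively, so the irreducible monomials $1,c,h,ch$ form a basis. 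Your route is more mechanical and self-contained (no need to guess a model), at the cost of a confluence verification; the paper's route is shorter but requires producing the concrete representation. For the $H_4$ identification you build the map in the direction opposite to the paper's ($h\mapsto\tfrac12(1-c)+cv$ rather than $v\mapsto\tfrac12(\bar 1-\bar c)+\bar c\bar h$), but a direct computation shows these are mutually inverse, so this is only a cosmetic difference; your surjectivity-plus-equal-dimension argument and the observation that a bialgebra morphism between Hopf algebras automatically respects antipodes are both sound.
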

\begin{proof}	
Again, it is easy to see that $I+I_1$ is a biideal and $S(I+I_1) \subseteq I+I_1$. Hence $\mathbbm k \langle c,d, h \rangle/(I+I_1)$ is indeed a Hopf algebra. The relations on $\bar c$ and $\bar h$ imply that $\mathbbm k \langle c,d, h \rangle/(I+I_1)$
is the linear span of $\bar 1, \bar c, \bar h, \bar c \bar  h$. In order to prove that $\bar 1, \bar c, \bar h, \bar c \bar  h$ are linearly independent, it is sufficient to provide a unital associative algebra with such relations
where the corresponding elements are indeed linearly independent. 

Let $ \mathbbm k e_1 \oplus \mathbbm k e_2$ be the algebra that is the direct product of two copies of the base field~$ \mathbbm k $, $e_1 e_2 = e_2 e_1 = 0 $, $e_1^2=e_1$, $e_2^2=e_2$. Let $\langle a, b \rangle_{\mathbbm k}$
be the two-dimensional $ \mathbbm k e_1 \oplus \mathbbm k e_2$-bimodule where $e_1 a = a e_2 = a$, $e_2 b = b e_1 = b$, $e_2 a = a e_1 = e_1 b = b e_2 = 0$. Using these structures, define on $ \mathbbm k e_1 \oplus \mathbbm k e_2 \oplus \langle a, b \rangle_{\mathbbm k}$ the structure of an algebra such that $\langle a, b \rangle_{\mathbbm k}^2=0$.
This algebra is associative with the identity element $e_1 + e_2$. Moreover, if we denote $c_1:=a+b+e_1-e_2$, $h_1 := e_2$,
then $c_1$ and $h_1$ satisfy the same relations as $\bar c$ and $\bar h$, which implies that 
$$\mathbbm k \langle c,d, h \rangle/(I+I_1) \cong  \mathbbm k e_1 \oplus \mathbbm k e_2 \oplus \langle a, b \rangle_{\mathbbm k}$$
as algebras, they are both four-dimensional, and $\bar 1, \bar c, \bar h, \bar c \bar  h$ is indeed a basis in $\mathbbm k \langle c,d, h \rangle/(I+I_1)$.

Suppose $\ch \mathbbm k \ne 2$. Then the map $c \mapsto \bar c$ and $v \mapsto \frac{\bar 1 - \bar c}2 + \bar c\bar h$
induces an isomorphism of Hopf algebras $H_4$ and $\mathbbm k \langle c,d, h \rangle/(I+I_1)$.
\end{proof}	

\begin{remark}\label{RemarkTwoPointsCoactUpperTriangularFinDimSelfDual} It is well known that the Sweedler algebra is self-dual if $\ch \mathbbm k \ne 2$ (see e.g. Remark~\ref{RemarkTwoPointsH4H4Star} below). It turns out that the Hopf algebra $\mathbbm k \langle c,d, h \rangle/(I+I_1)$
is self-dual in any characteristic. One of the possible Hopf algebra isomorphisms
 $$\mathbbm k \langle c,d, h \rangle/(I+I_1) \cong \bigl( \mathbbm k \langle c,d, h \rangle/(I+I_1) \bigr)^*$$
 is $$ \bar 1 \mapsto \delta_1 + \delta_c,\quad \bar c \mapsto \delta_1 - \delta_c + \delta_h - \delta_{ch},\quad
 \bar h \mapsto -\delta_h + \delta_c,\quad \bar c \bar h \mapsto - \delta_c$$
 where $\delta_1, \delta_c, \delta_h, \delta_{ch}$ is the basis of $ \bigl( \mathbbm k \langle c,d, h \rangle/(I+I_1) \bigr)^*$ dual to the basis $\bar 1$, $\bar c$, $\bar h$, $\bar c \bar h$ of $\mathbbm k \langle c,d, h \rangle/(I+I_1)$.  
\end{remark}	

\begin{theorem}\label{TheoremTwoPointsCoactClassification} Let $\mathbbm k$ be a field.
Then every unital Hopf algebra coaction on $\mathbbm k \times \mathbbm k$ is equivalent via $\id_{\mathbbm k \times \mathbbm k}$ to one of the following four unital coactions:
\begin{enumerate}
\item \label{ItemTheoremTwoPointsCoactClassification1} $\rho_1 \colon {\mathbbm k \times \mathbbm k} \to (\mathbbm k \times \mathbbm k) \otimes \mathbbm k$, $\rho_1(a)=a\otimes 1$ for all
$a\in \mathbbm k \times \mathbbm k$. The cosupport $\cosupp \rho_1$
is one-dimensional and consists of all scalar operators on $\mathbbm k \times \mathbbm k$.

\item \label{ItemTheoremTwoPointsCoactClassification2} 
 $\rho_2
= \Delta_{(\mathbbm k C_2)^*}$, the comultiplication in the Hopf algebra dual to $\mathbbm k C_2$
where isomorphism $\mathbbm k \times \mathbbm k \cong (\mathbbm k C_2)^*$ is defined by $e=(1,0)\mapsto \delta_1$, $(0,1)\mapsto \delta_c$ and $\delta_1, \delta_c$ is the basis dual to the basis $1,c$ of $\mathbbm k C_2$.
The cosupport $\cosupp \rho_2$
is two-dimensional and consists of all linear operators on $\mathbbm k \times \mathbbm k$
that have matrices from the subalgebra $\left\lbrace \left( \left.\begin{smallmatrix} \alpha & \beta \\
\beta & \alpha \end{smallmatrix}\right) \right| \alpha,\beta \in \mathbbm k   \right\rbrace$
in the basis $(1,0)$, $(0,1)$.

\myitem[3]  \label{ItemTheoremTwoPointsCoactClassification3} $\rho_3 \colon {\mathbbm k \times \mathbbm k} \to (\mathbbm k \times \mathbbm k) \otimes \mathbbm k \langle c,d, h \rangle/I$ where $\rho_3(e):= 1_{\mathbbm k \times \mathbbm k} \otimes \bar h + e\otimes \bar c$ and $\mathbbm k \langle c,d, h \rangle/I$ is the Hopf algebra from Theorem~\ref{TheoremTwoPointsCoactUniversal}.
The cosupport $\cosupp \rho_3$
is three-dimensional and consists of all linear operators on $\mathbbm k \times \mathbbm k$
that have upper triangular matrices in the basis $1_{\mathbbm k \times \mathbbm k},e$. This class of equivalent coactions
contains a representative $\rho_\mathrm{3a}$ with a finite dimensional coacting Hopf algebra, namely, it is sufficient to replace $\mathbbm k \langle c,d, h \rangle/I$ with $\mathbbm k \langle c,d, h \rangle/(I+I_1)$ (see Theorem~\ref{TheoremTwoPointsCoactUpperTriangularFinDim}).
If $\ch \mathbbm k \ne 2$, then this class contains the $H_4$-coaction $\rho_\mathrm{3b} \colon {\mathbbm k \times \mathbbm k} \to (\mathbbm k \times \mathbbm k) \otimes H_4$
where $\rho_\mathrm{3b}(1,-1) = (1,-1) \otimes c + (1,1)\otimes cv$.
\end{enumerate}
Finally, the coactions $\rho_w$ and the corresponding Hopf algebras are $(\cosupp \rho_w)$-universal for $w=1,2,3$.
\end{theorem}
\begin{proof}[Proof of Theorem~\ref{TheoremTwoPointsCoactClassification}]
Consider an arbitrary unital $H$-coaction $\rho \colon \mathbbm k \times \mathbbm k \to (\mathbbm k \times \mathbbm k) \otimes H$
	where $H$ is a Hopf algebra. Define the elements $c$ and $h$ as in Lemma~\ref{LemmaTwoPointsCoactRelations}.
	
	Note that both $1_H$ and $c$ are group-like elements. Therefore they either coincide or are linearly independent.
	(See e.g.~\cite[Proposition~1.4.14]{DNR}.)
	Suppose $c=1_H$. Then, by Lemma~\ref{LemmaTwoPointsCoactRelations}, $hc+ch+c^2 = c$, i.e. $2h=0$.
	If $h=0$, which is always the case if $\ch \mathbbm k \ne 2$, then $\rho(e)=e\otimes 1_H$, i.e.
	$\cosupp \rho$ is the linear span of $\id_{\mathbbm k \times \mathbbm k}$ and we are in the case~\ref{ItemTheoremTwoPointsCoactClassification1}.
	
	When $\ch \mathbbm k = 2$, it is still possible that $c=1_H$, but $h\ne 0$.
	The condition $\varepsilon(h)=0$ implies that in this case $1_H$ and $h$ are linearly independent.
	Applying different linear functions $\langle 1_H, h \rangle_{\mathbbm k} \to \mathbbm k$, which can be extended to elements of $H^*$, we see that $\cosupp \rho$ consists of all linear operators on $\mathbbm k \times \mathbbm k$
	that have matrices from the subalgebra $\left\lbrace \left( \left.\begin{smallmatrix} \alpha & \beta \\
	0 & \alpha \end{smallmatrix}\right) \right| \alpha,\beta \in \mathbbm k   \right\rbrace$
	in the basis $1_{\mathbbm k \times \mathbbm k},e$. Switching to the basis $(1,0)$, $(0,1)$, we obtain that $\cosupp \rho= \cosupp \rho_2$. By Lemma~\ref{LemmaTwoPointsCoactRelations}, the element $h$ satisfies the same relation $h^2=h$ and has the same values of the counit and the comultiplication as $\delta_c$ from the Hopf algebra $(\mathbbm k C_2)^*$ in the case~\ref{ItemTheoremTwoPointsCoactClassification2}.
	
	Now we assume that $1_H$ and $c$ are linearly independent. If $h$ is a linear combination
	of $1_H$ and $c$, then $h$ commutes with $c$, and the relation $hc+ch+c^2 = c$ implies
	that $2h = 1-c$, which is impossible if $\ch \mathbbm k = 2$.
	Suppose $\ch \mathbbm k \ne 2$. Then $h=\frac{1_H-c}2$ and $c^2=1_H$ since $h^2=h$.
	 In the basis $(1,1)=1_{\mathbbm k \times \mathbbm k}$ and $(1,-1)=2e-1_{\mathbbm k \times \mathbbm k}$
	 		we have $\rho(1,-1) = (1,-1)\otimes c$. Applying elements of $H^*$, we see that in this case 
	 		$\cosupp \rho$ consists of all linear operators that have diagonal matrices
	 		in the basis $(1,1)$, $(1,-1)$.  Switching to the basis $(1,0)$, $(0,1)$ and identifying $h$ with $\delta_c$ and $c$ with $(\delta_1 - \delta_c)$, we again
	 		arrive at the case~\ref{ItemTheoremTwoPointsCoactClassification2}.
 
	 Suppose that $1_H$, $c$ and $h$ are linearly independent. Applying elements of $H^*$, we see that we are in the case~\ref{ItemTheoremTwoPointsCoactClassification3}. By Lemma~\ref{LemmaTwoPointsCoactRelations}
	 and Theorem~\ref{TheoremTwoPointsCoactUniversal}, the coaction $\rho_3$ is indeed universal.	
	 
	 	We have shown above that for every characteristic $\ch \mathbbm k$, there exist exactly three distinct possibilities for
	 linear dependencies between $1$, $c$ and $h$, each corresponding to a specific cosupport.
	 In the cases~\ref{ItemTheoremTwoPointsCoactClassification1} and~\ref{ItemTheoremTwoPointsCoactClassification2}, 
	 the coaction factors through $\rho_1$ and $\rho_2$, respectively. Combined with
	 Remark~\ref{RemarkUniversalPropertyOfACoaction}, this implies that the coactions $\rho_1$ and $\rho_2$ are also universal.	 
\end{proof}	

Now we are following the strategy described in Section~\ref{SectionClassificationOutline}:
\begin{theorem}\label{TheoremTwoPointsActClassification} Let $\mathbbm k$ be a field.
	Then every unital Hopf algebra action on $\mathbbm k \times \mathbbm k$ is equivalent to one of the following three unital actions:
	\begin{enumerate}
		\item \label{ItemTheoremTwoPointsActClassification1} $\psi_1 \colon \mathbbm k \otimes  (\mathbbm k \times \mathbbm k) \to \mathbbm k \times \mathbbm k$, $\psi_1(\lambda \otimes a):=\lambda a$ for all
		$\lambda \in \mathbbm k$, $a\in \mathbbm k \times \mathbbm k$. The cosupport $\cosupp \psi_1$
		is one-dimensional and consists of all scalar operators on $\mathbbm k \times \mathbbm k$.
		
		\item  \label{ItemTheoremTwoPointsActClassification2} $\psi_2 \colon    
\mathbbm k C_2 \otimes
  (\mathbbm k \times \mathbbm k)\to  \mathbbm k \times \mathbbm k $
  where $\psi_2\bigl(c \otimes (\alpha, \beta)\bigr) := (\beta,\alpha)$ for all $\alpha, \beta \in
   \mathbbm k $.  
The cosupport $\cosupp \psi_2$
is two-dimensional and consists of all linear operators on $\mathbbm k \times \mathbbm k$
that have matrices from the subalgebra $\left\lbrace \left( \left.\begin{smallmatrix} \alpha & \beta \\
\beta & \alpha \end{smallmatrix}\right) \right| \alpha,\beta \in \mathbbm k   \right\rbrace$
in the basis $(1,0)$, $(0,1)$.
		
		\item  \label{ItemTheoremTwoPointsActClassification3} $\psi_3=
\rho_3^\vee  (\varkappa_{\mathbbm k \langle c,d, h \rangle/I}\otimes \id_ {\mathbbm k \times \mathbbm k})$ where 
$\rho_3 \colon {\mathbbm k \times \mathbbm k} \to (\mathbbm k \times \mathbbm k) \otimes \mathbbm k \langle c,d, h \rangle/I$  is the coaction from
Theorem~\ref{TheoremTwoPointsCoactClassification} and $\mathbbm k \langle c,d, h \rangle/I$ is the Hopf algebra from Theorem~\ref{TheoremTwoPointsCoactUniversal}.
		The cosupport $\cosupp \psi_3$
		is three-dimensional and consists of all linear operators on $\mathbbm k \times \mathbbm k$
		that have upper triangular matrices in the basis $1_{\mathbbm k \times \mathbbm k},e$. If $\ch \mathbbm k \ne 2$, then this class
of equivalent actions
		contains the $H_4$-action $\psi_\mathrm{3b} \colon H_4 \otimes (
\mathbbm k \times \mathbbm k) \to \mathbbm k \times \mathbbm k$
		where $\psi_\mathrm{3b}\bigl(c\otimes(\alpha,\beta)\bigr) := (\beta,\alpha)$,
		$\psi_\mathrm{3b}\bigl(v \otimes (\alpha,\beta)\bigr) := (\alpha-\beta, \alpha-\beta)$ for all $\alpha, \beta \in \mathbbm k$.
	\end{enumerate}
	Finally, the actions $\psi_k$ and the corresponding Hopf algebras are $(\cosupp \psi_k)$-universal for $k=1,2,3$.
\end{theorem}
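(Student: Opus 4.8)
The plan is to apply the classification strategy of Section~\ref{SectionClassificationOutline} to the coaction classification already obtained in Theorem~\ref{TheoremTwoPointsCoactClassification}. The key reduction is the identity recorded in that section: for any Hopf algebra action $\psi$ on the finite dimensional algebra $A=\mathbbm k\times\mathbbm k$ one has $\cosupp\psi=\cosupp\rho^{\mathbf{Hopf}}_{A,\cosupp\psi}$, so the cosupport of every action equals the cosupport of some coaction. By Theorem~\ref{TheoremTwoPointsCoactClassification} every coaction is equivalent via $\id_A$ to one of $\rho_1,\rho_\mathrm{2a}$ (resp.\ $\rho_\mathrm{2b}$ in characteristic $2$), $\rho_3$, which means that, in a fixed characteristic, $\cosupp\psi$ is literally one of exactly three subalgebras of $\End_{\mathbbm k}(A)$: the scalars, the two-dimensional algebra described in Theorem~\ref{TheoremTwoPointsCoactClassification}, and the upper triangular operators in the basis $1_A,e$. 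Thus it suffices to produce an action realizing each of these three cosupports, since then every action is equivalent (via $\id_A$) to one of the three, and the three are pairwise inequivalent because their cosupports, having distinct dimensions $1,2,3$, cannot be conjugate by any $\tilde\varphi$.

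I would produce the three actions as follows. For the scalar cosupport the trivial action $\psi_1$ with $H=\mathbbm k$ works, since $\psi_1(\lambda\otimes(-))=\lambda\,\id_A$; conversely, unitality forces any action with cosupport inside the scalars to send $h$ to $\varepsilon(h)\id_A$, so it coincides with $\psi_1$. For the upper triangular cosupport the Duality Theorem~\ref{TheoremUnivHopf(Co)actDuality} is decisive: the action $\psi_3=\rho_3^\vee(\varkappa_{\mathbbm k\langle c,d,h\rangle/I}\otimes\id_A)$ is exactly the universal action $\psi^{\mathbf{Hopf}}_{A,\cosupp\rho_3}$, so $\cosupp\psi_3=\cosupp\rho_3$ is the upper triangular algebra. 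When $\ch\mathbbm k\neq2$ I would additionally verify by a direct computation that $\psi_\mathrm{3a}$ respects the defining relations and comultiplication of $H_4$ and the module-algebra axiom, and that the matrices of $\psi_\mathrm{3a}(c)$ and $\psi_\mathrm{3a}(v)$ in the basis $1_A,e$ span the upper triangular operators together with $\id_A$, so that $\psi_\mathrm{3a}$ lies in this equivalence class.

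The characteristic-sensitive case is the two-dimensional cosupport, realized by the $\mathbbm k C_2$-action $\psi_2$ by the swap $\sigma\colon(\alpha,\beta)\mapsto(\beta,\alpha)$. Here I would compute the matrix of $\psi_2(c)=\sigma$ in each of the two relevant bases: in the basis $(1,1),(1,-1)$ it equals $\diag(1,-1)$, which together with $\id_A$ spans the diagonal operators when $\ch\mathbbm k\neq2$; in the basis $1_A,e$ it equals $\left(\begin{smallmatrix}1&1\\0&1\end{smallmatrix}\right)$, which together with $\id_A$ spans $\left\{\left(\begin{smallmatrix}\alpha&\beta\\0&\alpha\end{smallmatrix}\right)\right\}$ when $\ch\mathbbm k=2$. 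In both characteristics $\cosupp\psi_2$ is therefore the required two-dimensional algebra, completing the classification.

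It remains to confirm that $\psi_1,\psi_2,\psi_3$ are the universal actions for their cosupports. For $\psi_3$ this is built in, as $\psi_3$ was defined as the dual of the universal coaction and the acting Hopf algebra is $(\mathbbm k\langle c,d,h\rangle/I)^\circ$. For $\psi_1$ it follows from $\mathcal H^\square(A,\cosupp\rho_1)=\mathbbm k$, whose finite dual is again $\mathbbm k$. For $\psi_2$ I would use that the relevant universal coacting Hopf algebra is isomorphic to $(\mathbbm k C_2)^*$, which is finite dimensional, so its finite dual is $(\mathbbm k C_2)^{**}\cong\mathbbm k C_2$; computing the dual action $\rho_\mathrm{2a}^\vee(\varkappa\otimes\id_A)$ (resp.\ $\rho_\mathrm{2b}^\vee(\varkappa\otimes\id_A)$) then identifies it with $\psi_2$. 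I expect the main difficulty to be bookkeeping rather than conceptual: one must keep the two bases and the two characteristics aligned throughout the matrix computations, and—most delicately—check that dualizing each universal coacting Hopf algebra reproduces precisely the generator of $\mathbbm k C_2$ acting by $\sigma$, rather than merely a Hopf algebra abstractly isomorphic to $\mathbbm k C_2$.
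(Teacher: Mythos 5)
Your overall strategy is the same as the paper's: use the reduction $\cosupp\psi=\cosupp\rho^{\mathbf{Hopf}}_{A,\cosupp\psi}$ to force the cosupport of any action to be one of the three subalgebras from Theorem~\ref{TheoremTwoPointsCoactClassification}, then realize each cosupport by an explicit action. However, there is a genuine gap at the step where you write that, by the Duality Theorem, ``$\psi_3$ is exactly the universal action $\psi^{\mathbf{Hopf}}_{A,\cosupp\rho_3}$, \emph{so} $\cosupp\psi_3=\cosupp\rho_3$.'' The Duality Theorem identifies $\psi^{\mathbf{Hopf}}_{A,V}$ with the dual of the universal coaction, but it only gives $\cosupp\psi^{\mathbf{Hopf}}_{A,V}\subseteq V$; the inclusion can be strict, and Section~\ref{SectionCosuppReduces} of this very paper (Theorem~\ref{TheoremMnElGradDualTrivial}) exists precisely to show that the cosupport can collapse under dualization. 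The outline in Section~\ref{SectionClassificationOutline} is explicit that one must \emph{exhibit} an action with cosupport equal to $V$ before concluding $\cosupp\psi^{\mathbf{Hopf}}_{A,V}=V$; the Duality Theorem alone does not do this.

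For $\ch\mathbbm k\ne 2$ your gap is self-repairing, because you separately verify that the $H_4$-action $\psi_{\mathrm{3a}}$ has the full upper triangular cosupport, and by universality $\cosupp\psi_{\mathrm{3a}}\subseteq\cosupp\psi_3\subseteq\cosupp\rho_3$ then forces equality. But for $\ch\mathbbm k=2$ you exhibit no action at all with the three-dimensional upper triangular cosupport ($\psi_{\mathrm{3a}}$ is only defined when $\ch\mathbbm k\ne 2$), so case~(3) of the classification is unproved in characteristic $2$. The paper closes this hole with the observation that each equivalence class of coactions has a representative over a \emph{finite dimensional} Hopf algebra --- for case~(3) this is $\mathbbm k\langle c,d,h\rangle/(I+I_1)$ from Theorem~\ref{TheoremTwoPointsCoactUpperTriangularFinDim} --- and for a finite dimensional $H$ one has $H^\circ=H^*$, so the dual action of a coaction has exactly the same cosupport as the coaction. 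You should add this finite-dimensionality argument (or an explicit characteristic-$2$ action with upper triangular cosupport) to make case~(3) complete in all characteristics; the rest of your argument, including the treatment of $\psi_1$ and the two bases for $\psi_2$, matches the paper.
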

\begin{proof}
	Consider an arbitrary unital $H$-action $\psi \colon H  \otimes (\mathbbm k \times \mathbbm k) \to \mathbbm k \times \mathbbm k$. Let $V := \cosupp \psi$. Then by Theorem~\ref{TheoremUnivHopf(Co)actDuality}
	the $V$-universal acting Hopf algebra ${}_\square \mathcal H(\mathbbm k \times \mathbbm k,V)$ is isomorphic to $\left(\mathcal H^\square(\mathbbm k \times \mathbbm k,V)\right)^\circ$ where its universal action $\psi^{\mathbf{Hopf}}_{\mathbbm k \times \mathbbm k,V}$
	coincides with $\bigl(\rho^\mathbf{Hopf}_{\mathbbm k \times \mathbbm k,V}\bigr)^\vee(\varkappa_{\mathcal{H}^\square(\mathbbm k \times \mathbbm k,V)} \otimes \id_{\mathbbm k \times \mathbbm k})$.
	Recall that for the action $\psi$ we have $\cosupp \psi = V$, which implies that 
	$\cosupp \psi^{\mathbf{Hopf}}_{\mathbbm k \times \mathbbm k,V} = \cosupp \rho^\mathbf{Hopf}_{\mathbbm k \times \mathbbm k,V} = V$.
	Therefore, $V$ must coincide with one of the possible cosupports listed in Theorem~\ref{TheoremTwoPointsCoactClassification}.
	At the same time, each equivalence class of coactions in Theorem~\ref{TheoremTwoPointsCoactClassification} contains a representative
	with a finite dimensional Hopf algebra. Hence for $\mathbbm k \times \mathbbm k$ the cosupport of the universal action dual to the universal coaction cannot become smaller than the cosupport $V$ of the coaction and for every $\ch \mathbbm k$ we still have three cases. In the cases~\ref{ItemTheoremTwoPointsActClassification1} and~\ref{ItemTheoremTwoPointsActClassification2} we calculate the dual of the $V$-universal coacting Hopf algebra and its coaction.
	
	An explicit verification shows that $\psi_\mathrm{3b} \colon H_4 \otimes (
	\mathbbm k \times \mathbbm k) \to \mathbbm k \times \mathbbm k$ is indeed an action	and, if $\ch \mathbbm k \ne 2$, for each basis in $ \mathbbm k \times \mathbbm k$
	where the identity element $(1,1)$ is the first basis vector, $\cosupp \psi_\mathrm{3b}$ consists of all the operators that have upper triangular matrices.
\end{proof}
\begin{remark}
	(Co)actions on $ \mathbbm k \times \mathbbm k$ were classified in Theorems~\ref{TheoremTwoPointsCoactClassification} and~\ref{TheoremTwoPointsActClassification} up to an equivalence via $\id_{k \times \mathbbm k}$. However,
	$\Aut( \mathbbm k \times \mathbbm k) \cong C_2$ where $c(\alpha,\beta)=(\beta, \alpha)$ for all $\alpha,\beta\in\mathbbm k$.
	All the cosupports in Theorems~\ref{TheoremTwoPointsCoactClassification} and~\ref{TheoremTwoPointsActClassification} are stable under this $\Aut( \mathbbm k \times \mathbbm k)$-action, which implies that we actually have the classification up to an arbitrary equivalence.
\end{remark}
\begin{remark} If we identify $\mathbbm k \langle c,d, h \rangle/(I+I_1)$
	and $\bigl( \mathbbm k \langle c,d, h \rangle/(I+I_1) \bigr)^*$ by the isomorphism from Remark~\ref{RemarkTwoPointsCoactUpperTriangularFinDimSelfDual},
	the unital action $\rho_\mathrm{3a}^\vee \colon \mathbbm k \langle c,d, h \rangle/(I+I_1) \otimes (
	\mathbbm k \times \mathbbm k) \to \mathbbm k \times \mathbbm k$ is uniquely determined by the equalities 
	$\rho_\mathrm{3a}^\vee (c \otimes e) = 1_{\mathbbm k \times \mathbbm k}- e$
	and 	$\rho_\mathrm{3a}^\vee (h \otimes e) = e-1_{\mathbbm k \times \mathbbm k}$.
\end{remark}
\begin{remark}\label{RemarkTwoPointsH4H4Star}
	Suppose $\ch \mathbbm k \ne 2$. Define the automorphism $\varphi \colon H_4 \mathrel{\widetilde{\to}} H_4$
	and the isomorphism $\xi \colon H_4 \mathrel{\widetilde{\to}} H_4^*$
	 of Hopf algebras by $$\varphi(1):=1,\ \varphi(c):=c,\ \varphi(v):=2v,\ \varphi(cv):=2cv,$$ 
	 $$\xi(1) := \delta_1+\delta_c,\ \xi(c) := \delta_1 - \delta_c,\ \xi(v):= \delta_{cv}-\delta_v,\  
	 \xi(cv) := \delta_v+\delta_{cv}.$$
	 Then $\rho_\mathrm{3b}^\vee$ and $\psi_\mathrm{3b}$ can be identified via the composition $\xi\varphi$. 
\end{remark}

\section{Dual numbers}\label{SectionDualNumbers}

Fix the basis $\bar 1 := 1+ (x^2)$ and  $\bar x := x + (x^2)$ in the algebra $ \mathbbm k[x]/(x^2)$ of dual numbers
over a field $\mathbbm k$.

\begin{lemma}\label{LemmaDualNumbersCoactRelations} Let  $\rho \colon \mathbbm k[x]/(x^2) \to \mathbbm k[x]/(x^2) \otimes H$ be an arbitrary unital $H$-coaction where $H$ is a Hopf algebra. Define $c,h\in H$ by $\rho(\bar x) =  \bar 1 \otimes h + \bar x \otimes c$.
	Then  $c$ is invertible, $$h^2=0,\quad hc+ch=0,$$   $$\Delta c= c\otimes c,\quad \Delta h = h \otimes c + 1_H \otimes h,$$
	$$\varepsilon(c)=1,\quad \varepsilon(h) = 0,$$ $$Sc = c^{-1},\quad Sh = -hc^{-1}.$$
\end{lemma}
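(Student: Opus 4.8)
The plan is to compute $\rho$ on the basis element $\bar x$ and extract the stated relations from the three defining conditions of a unital comodule algebra: compatibility with multiplication, coassociativity, and counitality. Since $\bar x^2 = 0$ in $\mathbbm k[x]/(x^2)$, the multiplicative compatibility $\rho(\bar x^2) = \rho(\bar x)^2$ will be the source of the relations $h^2 = 0$ and $hc + ch = 0$, in direct analogy with Lemma~\ref{LemmaTwoPointsCoactRelations}, where $e^2 = e$ produced $h^2 = h$ and $hc + ch + c^2 = c$. This is the main structural difference between the two lemmas and essentially the only place where the distinction between the two algebras enters.

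First I would invoke unitality to record $\rho(\bar 1) = \bar 1 \otimes 1_H$. Then, expanding
$$\rho(\bar x)^2 = (\bar 1 \otimes h + \bar x \otimes c)^2 = \bar 1 \otimes h^2 + \bar x \otimes (hc + ch + c^2\cdot 0)$$
and using $\bar 1 \cdot \bar x = \bar x \cdot \bar 1 = \bar x$ together with $\bar x^2 = 0$, I obtain $\rho(\bar x)^2 = \bar 1 \otimes h^2 + \bar x \otimes (hc + ch)$. Comparing with $\rho(\bar x^2) = \rho(0) = 0$ and using that $\bar 1, \bar x$ are linearly independent yields $h^2 = 0$ and $hc + ch = 0$. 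Next, applying $(\id \otimes \Delta)\rho = (\rho \otimes \id)\rho$ to $\bar x$ and matching coefficients of $\bar 1$ and $\bar x$ gives $\Delta c = c \otimes c$ and $\Delta h = h \otimes c + 1_H \otimes h$, exactly as in the two-points case, since the coalgebra computation is insensitive to the algebra relations. Counitality of the comodule applied to $\bar x$ gives $\bar x = \varepsilon(h)\bar 1 + \varepsilon(c)\bar x$, hence $\varepsilon(h) = 0$ and $\varepsilon(c) = 1$.

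Finally, $\Delta c = c \otimes c$ and $\varepsilon(c) = 1$ show that $c$ is group-like, so $c$ is invertible with $Sc = c^{-1}$. For the antipode on $h$, I would use the antipode axiom $\varepsilon(h)1_H = (Sh_{(1)})h_{(2)}$ with $\Delta h = h \otimes c + 1_H \otimes h$, giving $0 = (Sh)c + h$, whence $Sh = -hc^{-1}$. Every step here is a routine coefficient comparison, so I anticipate no genuine obstacle; the only point requiring care is simply carrying $\bar x^2 = 0$ (rather than $\bar x^2 = \bar x$) through the multiplicative compatibility, which is precisely what changes the quadratic relations relative to Lemma~\ref{LemmaTwoPointsCoactRelations}.
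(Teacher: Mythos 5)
Your proposal is correct and matches the paper's approach: the paper's proof of this lemma is literally the one-line remark that it is ``completely analogous'' to the proof of Lemma~\ref{LemmaTwoPointsCoactRelations}, and you have simply carried out that analogy explicitly, with the only substantive change being that $\bar x^2=0$ replaces $e^2=e$ and yields $h^2=0$, $hc+ch=0$. (The expression ``$c^2\cdot 0$'' in your expansion is slightly loose notation --- the vanishing term is $\bar x^2\otimes c^2$ --- but your stated conclusion is the correct one.)
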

\begin{proof}
	The proof is completely analogous to the proof of Lemma~\ref{LemmaTwoPointsCoactRelations}.
\end{proof}	

\begin{theorem}\label{TheoremDualNumbersCoactUniversal} Consider the free unital associative algebra $\mathbbm k \langle c,d, h \rangle$ over a field $\mathbbm k$ with
	free generators $c,d, h$, i.e. the algebra of polynomials in the non-commuting variables $c,d, h$.
	Denote by $I_0$ the ideal of $\mathbbm k \langle c,d, h \rangle$ generated by the elements
	$$cd-1,\quad dc-1,\quad h^2,\quad hc+ch.$$
	Define $\Delta \colon \mathbbm k \langle c,d, h \rangle \to  \mathbbm k \langle c,d, h \rangle \otimes  \mathbbm k \langle c,d, h \rangle$ and $\varepsilon \colon \mathbbm k \langle c,d, h \rangle \to  \mathbbm k$
	as the unital algebra homomorphisms such that $$\Delta c= c\otimes c,\quad \Delta d= d\otimes d,\quad \Delta h = h \otimes c + 1 \otimes h,$$
	$$\varepsilon(c)=\varepsilon(d)=1,\quad \varepsilon(h) = 0.$$
	Define $S \colon \mathbbm k \langle c,d, h \rangle \to \mathbbm k \langle c,d, h \rangle$ 
	as the unital algebra anti-homomorphism such that $$Sc = d,\ Sd = c,\ Sh = -hd.$$
	Then $$S I_0 \subseteq I_0,\quad \varepsilon(I_0)=0,\quad \Delta I_0\subseteq
	I_0 \otimes \mathbbm k \langle c,d, h \rangle +
	\mathbbm k \langle c,d, h \rangle \otimes I_0$$ and the algebra $\mathbbm k \langle c,d, h \rangle/I_0$ with the induced maps $\Delta, \varepsilon, S$ is a Hopf algebra. Finally, $\mathbbm k \langle c,d, h \rangle/I_0=\mathcal H^\square\bigl(\mathbbm k[x]/(x^2)\bigr)$, the Manin universal coacting Hopf algebra  on $\mathbbm k[x]/(x^2)$.
\end{theorem}
\begin{proof} In Theorem~\ref{TheoremTwoPointsCoactUniversal} above we have noticed
	that $\mathbbm k \langle c,d, h \rangle$ is a bialgebra and shown
	that $$\Delta(cd-1), \Delta(dc-1) \in I_0 \otimes \mathbbm k \langle c,d, h \rangle +
	\mathbbm k \langle c,d, h \rangle \otimes I_0.$$
	
	Note that
	$$\Delta(h^2)=h^2 \otimes c^2 + h\otimes( ch + hc) + 1\otimes h^2 \in I \otimes \mathbbm k \langle c,d, h \rangle +
			\mathbbm k \langle c,d, h \rangle \otimes I,$$
	\begin{equation*}\begin{split}\Delta(hc+ch)=hc \otimes c^2 + c\otimes hc  + ch \otimes c^2 + c\otimes ch
			\\= (hc+ch)\otimes c^2 + c\otimes (hc+ch) \in I_0 \otimes \mathbbm k \langle c,d, h \rangle +
			\mathbbm k \langle c,d, h \rangle \otimes I_0.\end{split}\end{equation*}
	Hence $I$ is a biideal and  $\mathbbm k \langle c,d, h \rangle/I_0$ is a bialgebra.
	
	Again, $S(cd-1)=dc-1 \in I_0$, $S(dc-1)=cd-1 \in I_0$.
	Moreover, $$dh+hd=d(hc+ch)d+dh(1-cd)+(1-dc)hd \in I_0.$$
	Hence
	$$S(h^2)=hdhd = h(dh+hd)d-h^2d^2 \in I_0.$$
	Finally,
	$$S(hc+ch)=-dhd-hd^2 = -(dh+hd)d \in I_0.$$
	Therefore, $SI_0 \subseteq I_0$ and $S$ is well defined on $\mathbbm k \langle c,d, h \rangle/I_0$.
	An explicit verification shows that $S$ is indeed the antipode and $\mathbbm k \langle c,d, h \rangle/I_0$ is a Hopf algebra.
	
	Define the unital homomorphism $\rho \colon  \mathbbm k[x]/(x^2) \to  \mathbbm k[x]/(x^2) \otimes \mathbbm k \langle c,d, h \rangle/I_0$ such that $\rho(\bar x):= \bar 1 \otimes \bar h + \bar x\otimes \bar c$
	where $\bar a := a + I_0$ for all $a\in \mathbbm k \langle c,d, h \rangle$.
	Then $\rho$ is a unital coaction. Lemma~\ref{LemmaDualNumbersCoactRelations} implies that $\rho$ is universal among all unital coactions on $ \mathbbm k[x]/(x^2)$.
\end{proof}	

\begin{theorem}\label{TheoremDualNumbersCoactClassification} Let $\mathbbm k$ be a field.
	Then every unital Hopf algebra coaction on $\mathbbm k[x]/(x^2)$ is equivalent via $\id_{\mathbbm k[x]/(x^2)}$ to one of the following five unital coactions:
	\begin{enumerate}
		\item \label{ItemTheoremDualNumbersCoactClassification1} $\rho_1 \colon {\mathbbm k[x]/(x^2)} \to \mathbbm k[x]/(x^2) \otimes \mathbbm k$, $\rho_1(a)=a\otimes 1$ for all
		$a\in \mathbbm k[x]/(x^2)$. The cosupport $\cosupp \rho_1$
		is one-dimensional and consists of all scalar operators on $\mathbbm k[x]/(x^2)$.
		
		\item  \label{ItemTheoremDualNumbersCoactClassification2} (if $\ch \mathbbm k = 2$) $\rho_2 = \Delta_{\mathbbm k[x]/(x^2)}$
		where the structure of a Hopf algebra on $\mathbbm k[x]/(x^2)$ is defined by 
		$\Delta \bar x :=\bar x \otimes \bar 1 + \bar 1 \otimes \bar x$, $\varepsilon(\bar x):=0$, $S\bar x=\bar x$.  The cosupport $\cosupp \rho_2$
		is two-dimensional and consists of all linear operators on $\mathbbm k[x]/(x^2)$
		that have matrices from the subalgebra $\left\lbrace \left( \left.\begin{smallmatrix} \alpha & \beta \\
		0 & \alpha \end{smallmatrix}\right) \right| \alpha,\beta \in \mathbbm k   \right\rbrace$
		in the basis $\bar 1,\bar x$.

		\item \label{ItemTheoremDualNumbersCoactClassification3} (if $\ch \mathbbm k = 2$)
		$\rho_{3,\theta}
		= \Delta_{\mathbbm k C_2}$, the comultiplication in the group Hopf algebra $\mathbbm k C_2$
		where $C_2=\langle c \rangle$ is the cyclic group of order $2$ and the isomorphism $\mathbbm k[x]/(x^2) \cong \mathbbm k C_2$ is defined by $\bar 1\mapsto 1$, $\bar x \mapsto \theta(1+c)$,
		$\theta \in  \mathbbm k \backslash \lbrace 0 \rbrace$ is a fixed element. In other words,
		$\rho_{3,\theta}$ corresponds to the standard $C_2$-grading on $\mathbbm k C_2$.
		The cosupport $\cosupp \rho_{3,\theta}$
is two-dimensional and consists of all linear operators on $\mathbbm k[x]/(x^2)$
that have matrices from the subalgebra $\left\lbrace \left( \left.\begin{smallmatrix} \alpha & \theta(\alpha+\beta) \\
0 & \beta \end{smallmatrix}\right) \right| \alpha,\beta \in \mathbbm k   \right\rbrace$ in the basis $\bar 1,\bar x$.

	\item \label{ItemTheoremDualNumbersCoactClassification4}
	 $\rho_4 \colon {\mathbbm k[x]/(x^2)} \to \mathbbm k[x]/(x^2) \otimes \mathbbm k C_\infty$ where $\rho_4(\bar x)=\bar x\otimes c$ where $C_\infty=\langle c \rangle$ is the infinite cyclic group and $ \mathbbm k C_\infty \cong \mathbbm k[c,c^{-1}]$,
	 the algebra of Laurent polynomials, i.e. $\rho_4$ corresponds to the $C_\infty$-grading on $\mathbbm k[x]/(x^2)$
	 where $\bar x \in \bigl(\mathbbm k[x]/(x^2)\bigr)^{(c)}$.	
		The cosupport $\cosupp \rho_4$
		is two-dimensional and consists of all linear operators on $\mathbbm k[x]/(x^2)$
		that have diagonal matrices in the basis $\bar 1,\bar x$. This class of equivalent coactions
		contains a representative with a finite dimensional coacting Hopf algebra, namely, the $\mathbbm k C_2$-coaction $\rho_\mathrm{4a} \colon {\mathbbm k[x]/(x^2)} \to \mathbbm k[x]/(x^2) \otimes \mathbbm k C_2$
		where $\rho_\mathrm{4a}(\bar x) = \bar x \otimes c$.

		\item  \label{ItemTheoremDualNumbersCoactClassification5} $\rho_5 \colon {\mathbbm k[x]/(x^2)} \to \mathbbm k[x]/(x^2) \otimes \mathbbm k \langle c,d, h \rangle/I_0$ where $\rho_5(\bar x):= \bar 1 \otimes \bar h + \bar x\otimes \bar c$ and $\mathbbm k \langle c,d, h \rangle/I_0$ is the Hopf algebra from Theorem~\ref{TheoremDualNumbersCoactUniversal}.
		The cosupport $\cosupp \rho_5$
		is three-dimensional and consists of all linear operators on $\mathbbm k[x]/(x^2)$
		that have upper triangular matrices in the basis $\bar 1,\bar x$. This class of equivalent coactions
		contains a representative with a finite dimensional coacting Hopf algebra, namely, the $H_4$-coaction $\rho_\mathrm{5a} \colon {\mathbbm k[x]/(x^2)} \to \mathbbm k[x]/(x^2) \otimes H_4$
		where $\rho_\mathrm{5a}(\bar x) = \bar x \otimes c + \bar 1\otimes cv$.
	\end{enumerate}
	Finally, the coactions $\rho_1, \rho_2, \rho_{3,\theta}, \rho_4,\rho_5$ and the corresponding Hopf algebras are $V$-universal for their cosupports $V$.
\end{theorem}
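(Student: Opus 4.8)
The plan is to mirror the proof of Theorem~\ref{TheoremTwoPointsCoactClassification}, replacing the relations of Lemma~\ref{LemmaTwoPointsCoactRelations} with those of Lemma~\ref{LemmaDualNumbersCoactRelations}. I would start from an arbitrary unital $H$-coaction $\rho$ and take $c,h \in H$ with $\rho(\bar x) = \bar 1 \otimes h + \bar x \otimes c$, so that $h^2 = 0$, $hc + ch = 0$, $\Delta c = c \otimes c$, $\Delta h = h \otimes c + 1_H \otimes h$, $\varepsilon(c) = 1$, $\varepsilon(h) = 0$, with $c$ invertible. Since equivalence via $\id_{\mathbbm k[x]/(x^2)}$ amounts to equality of cosupports, it suffices to compute $\cosupp \rho$. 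The key observation is that for every $\lambda \in H^*$ the operator $\rho^\vee(\lambda \otimes (-))$ has, in the basis $\bar 1, \bar x$, the matrix $\left(\begin{smallmatrix} \lambda(1_H) & \lambda(h) \\ 0 & \lambda(c)\end{smallmatrix}\right)$. Hence $\cosupp \rho$ is completely determined by the linear dependencies among $1_H$, $c$, $h$, and the whole classification reduces to a case analysis of which of these elements coincide or are linearly (in)dependent.

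I would organize the cases around the fact that $1_H$ and $c$ are both group-like, so they either coincide or are linearly independent. Whenever $c$ commutes with $h$ — that is, when $c = 1_H$, or when $h \in \langle 1_H, c\rangle_{\mathbbm k}$ — the relation $hc + ch = 0$ degenerates to $2hc = 0$, and invertibility of $c$ forces $2h = 0$; this is precisely where the characteristic enters. Concretely: \textbf{(i)} $c = 1_H$, $h = 0$ gives the trivial coaction with scalar cosupport, case~\ref{ItemTheoremDualNumbersCoactClassification1}; \textbf{(ii)} $c = 1_H$, $h \neq 0$ is possible only in characteristic $2$, where $h$ satisfies $h^2 = 0$, $\Delta h = h \otimes 1_H + 1_H \otimes h$, $Sh = h$, exactly the relations of $\bar x$ in the additive Hopf structure of case~\ref{ItemTheoremDualNumbersCoactClassification2}; \textbf{(iii)} $1_H, c$ independent with $h \in \langle 1_H, c\rangle_{\mathbbm k}$ forces $h = \theta(1_H - c)$ from $\varepsilon(h) = 0$, together with $2h = 0$, so $h = 0$ recovers the diagonal cosupport of case~\ref{ItemTheoremDualNumbersCoactClassification4} while $h \neq 0$ occurs only in characteristic $2$ and produces case~\ref{ItemTheoremDualNumbersCoactClassification3}; and \textbf{(iv)} $1_H, c, h$ independent gives the full upper-triangular cosupport of case~\ref{ItemTheoremDualNumbersCoactClassification5}. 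Reading off the matrix above in each case reproduces the listed cosupports.

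The step I would flag as the crux — the analogue of the point at which the two-point argument diverges — is the emergence of $c^2 = 1_H$ in case~\ref{ItemTheoremDualNumbersCoactClassification3}. There $h = \theta(1_H + c)$ with $\theta \neq 0$ in characteristic $2$, and the relation $h^2 = 0$ becomes $\theta^2(1_H + c^2) = 0$, forcing $c^2 = 1_H$; this is exactly what the relation $h^2 = h$ was used for in the two-point setting. Consequently $\langle 1_H, c\rangle_{\mathbbm k}$ is a Hopf subalgebra isomorphic to $\mathbbm k C_2$, and the coaction factors as the standard $C_2$-grading under the identification $\bar x \mapsto \theta(1 + c)$, which is what case~\ref{ItemTheoremDualNumbersCoactClassification3} asserts.

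Finally I would establish universality through Remark~\ref{RemarkUniversalPropertyOfACoaction}: for each listed coaction it suffices to produce a unique Hopf algebra homomorphism from its coacting Hopf algebra to any $H$ realizing the same cosupport, and this homomorphism is dictated by the forced values of $\Delta$, $\varepsilon$, $S$ on $c$ and $h$. Case~\ref{ItemTheoremDualNumbersCoactClassification5} is precisely Theorem~\ref{TheoremDualNumbersCoactUniversal} (with $1_H, c, h$ genuinely independent in $\mathbbm k\langle c,d,h\rangle/I_0$, as witnessed concretely by the $H_4$-representative $\rho_\mathrm{5a}$); case~\ref{ItemTheoremDualNumbersCoactClassification4} has $h = 0$ and a single invertible group-like $c$ with no finite-order relation, so the universal group of the corresponding grading is $C_\infty$ and Theorem~\ref{TheoremUniversalHopfOfAGradingIsJustUniversalGroup} identifies $\mathbbm k C_\infty$ as universal, with $\rho_\mathrm{4a}$ serving only as a finite-dimensional representative; cases~\ref{ItemTheoremDualNumbersCoactClassification1}--\ref{ItemTheoremDualNumbersCoactClassification3} follow by matching with $\mathbbm k$, the additive structure on $\mathbbm k[x]/(x^2)$, and $\mathbbm k C_2$ respectively. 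The only real obstacle is bookkeeping: keeping the two characteristics cleanly separated and checking the relevant linear-independence statements; beyond Lemma~\ref{LemmaDualNumbersCoactRelations} no new computation is required.
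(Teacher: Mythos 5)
Your proposal is correct and follows essentially the same route as the paper: the same case split on whether $1_H$ and $c$ coincide or are independent and on whether $h$ lies in their span, the same use of $hc+ch=0$ to force $2h=0$ in the commuting cases, the same derivation of $c^2=1_H$ from $h^2=0$ in the characteristic-$2$ case with $h=\theta(1_H+c)\ne 0$, and the same appeal to Remark~\ref{RemarkUniversalPropertyOfACoaction}, Lemma~\ref{LemmaDualNumbersCoactRelations} and Theorem~\ref{TheoremDualNumbersCoactUniversal} for universality. The only cosmetic differences are that you make the matrix $\left(\begin{smallmatrix} \lambda(1_H) & \lambda(h) \\ 0 & \lambda(c)\end{smallmatrix}\right)$ explicit where the paper just says ``applying elements of $H^*$,'' and you obtain $\beta=\alpha$ from $\varepsilon(h)=0$ rather than from $\alpha^2=\beta^2$.
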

\begin{remark}
	In the case $\ch \mathbbm k \ne 2$ the classification of the coactions above first appeared in an (unpublished) undergraduate project of Vladimir Nikitenko. For $\ch \mathbbm k = 2$ the automorphism group scheme of $\mathbbm k[x]/(x^2)$
	is considered e.g. in~\cite[Chapter 7, Exercise 16]{Waterhouse}.
\end{remark}
\begin{proof}[Proof of Theorem~\ref{TheoremDualNumbersCoactClassification}]
	Consider an arbitrary unital $H$-coaction $\rho \colon \mathbbm k[x]/(x^2) \to \mathbbm k[x]/(x^2) \otimes H$
	where $H$ is a Hopf algebra. Define the elements $c$ and $h$ as in Lemma~\ref{LemmaDualNumbersCoactRelations}.
	
	Again, both $1_H$ and $c$ are group-like elements. Therefore they either coincide or are linearly independent.
	Suppose $c=1_H$. Then $2h=0$ since, by Lemma~\ref{LemmaDualNumbersCoactRelations}, $hc+ch=0$. 
	If $h=0$, which is always the case if $\ch \mathbbm k \ne 2$, then $\rho(\bar x)=\bar x\otimes 1_H$, i.e.
	$\cosupp \rho$ is the linear span of $\id_{\mathbbm k[x]/(x^2)}$ and we are in the case~\ref{ItemTheoremDualNumbersCoactClassification1}. The coaction $\rho_1$ is obviously universal.
	
	When $\ch \mathbbm k = 2$, it is still possible that $c=1_H$, but $h\ne 0$.
	The condition $\varepsilon(h)=0$ implies that in this case $1_H$ and $h$ are linearly independent.
	Applying different linear functions $\langle 1_H, h \rangle_{\mathbbm k} \to \mathbbm k$, which can be extended to elements of $H^*$, we see that $\cosupp \rho = \cosupp \rho_2$. By Lemma~\ref{LemmaDualNumbersCoactRelations} the element $h$ satisfies the same relation $h^2=0$ and has the same values of the counit and the comultiplication as $\bar x$ from the Hopf algebra structure on $\mathbbm k[x]/(x^2)$ in the case~\ref{ItemTheoremDualNumbersCoactClassification2}. By Remark~\ref{RemarkUniversalPropertyOfACoaction}, the coaction $\rho_2$ is universal.
	
	Now we assume that $1_H$ and $c$ are linearly independent.
	 Suppose $h=\alpha 1_H + \beta c$ for some $\alpha, \beta \in\mathbbm k$. Then $h$ commutes with $c$, and the relation $hc+ch=0$ implies that $2h = 0$.
	 
	 Suppose $h=0$, which is always the case if $\ch \mathbbm k \ne 2$.
	 	Applying elements of $H^*$, we see that in this case 
	$\cosupp \rho$ consists of all linear operators that have diagonal matrices
	in the basis $\bar 1$, $\bar x$, i.e. we are in the case~\ref{ItemTheoremDualNumbersCoactClassification4}.
	The coaction $\rho_4$ is universal by Remark~\ref{RemarkUniversalPropertyOfACoaction}, since the linear span of all powers of $c$ in $H$ 	is a homomorphic image of $\mathbbm k C_\infty$.
	
	Suppose $\ch \mathbbm k = 2$ and $h\ne 0$. From the relation $h^2=0$ we obtain that $\alpha^2 1_H + \beta^2 c^2 = 0$.
	Hence $c^2=1$, $\alpha^2 = \beta^2$, which in the case of $\ch \mathbbm k = 2$ implies that $\alpha =\beta$.
	Let $\theta := \alpha = \beta$. Then $\rho(\bar x) = \bar 1 \otimes \theta(1+c) + \bar x \otimes c$.
	Applying elements of $H^*$, we see that in this case 
	$\cosupp \rho = \cosupp \rho_{3,\theta}$. The coaction $\rho_{3,\theta}$ is universal by Remark~\ref{RemarkUniversalPropertyOfACoaction}, since the linear span of $1_H$ and $c$ in $H$ is a Hopf algebra isomorphic to $\mathbbm k C_2$.
	
	Suppose that $1_H$, $c$ and $h$ are linearly independent. Applying elements of $H^*$, we see that we are in the case~\ref{ItemTheoremDualNumbersCoactClassification5}. Lemma~\ref{LemmaDualNumbersCoactRelations} and Theorem~\ref{TheoremDualNumbersCoactUniversal} imply that the coaction $\rho_5$ is indeed universal.	
\end{proof}	

The proof of Theorem~\ref{TheoremDualNumbersActClassification} below is completely analogous to the proof of Theorem~\ref{TheoremTwoPointsActClassification} above:
\begin{theorem}\label{TheoremDualNumbersActClassification} Let $\mathbbm k$ be a field.
	Then every unital Hopf algebra action on $\mathbbm k[x]/(x^2)$ is equivalent to one of the following five unital actions:
	\begin{enumerate}
		\item \label{ItemTheoremDualNumbersActClassification1} $\psi_1 \colon \mathbbm k \otimes  {\mathbbm k[x]/(x^2)} \to \mathbbm k[x]/(x^2) $, $\psi_1(\lambda \otimes a)=\lambda a$ for all
		$\lambda \in \mathbbm k$, $a\in \mathbbm k[x]/(x^2)$. The cosupport $\cosupp \psi_1$
		is one-dimensional and consists of all scalar operators on $\mathbbm k[x]/(x^2)$.
		
		\item  \label{ItemTheoremDualNumbersActClassification2} (if $\ch \mathbbm k = 2$) $\psi_2 \colon    
		\mathbbm k [x]/(x^2) \otimes
		\mathbbm k[x]/(x^2)\to  \mathbbm k[x]/(x^2) $,
		where the structure of a Hopf algebra on $\mathbbm k[x]/(x^2)$ was defined in the case~\ref{ItemTheoremDualNumbersCoactClassification2} of Theorem~\ref{TheoremDualNumbersCoactClassification},
		and
		 $\psi_2(\bar x \otimes \bar x) := \bar 1$.  
		The cosupport $\cosupp \psi_2$
		consists of all linear operators on $\mathbbm k[x]/(x^2)$
		that have matrices from the subalgebra $\left\lbrace \left( \left.\begin{smallmatrix} \alpha & \beta \\
		0 & \alpha \end{smallmatrix}\right) \right| \alpha,\beta \in \mathbbm k   \right\rbrace$
		in the basis $\bar 1,\bar x$.

		\item  \label{ItemTheoremDualNumbersActClassification3} (if $\ch \mathbbm k = 2$) $\psi_{3,\theta} \colon    
(\mathbbm k\times \mathbbm k) \otimes
\mathbbm k[x]/(x^2)\to  \mathbbm k[x]/(x^2)$,
where the  Hopf algebra isomorphism $\mathbbm k\times \mathbbm k \cong (\mathbbm k C_2)^*$ was defined in the case~\ref{ItemTheoremTwoPointsCoactClassification2} of Theorem~\ref{TheoremTwoPointsCoactClassification},
and
$$\psi_{3,\theta}\bigl((\alpha,\beta) \otimes \bar 1 \bigr) := \beta\bar 1,\quad
\psi_{3,\theta}\bigl((\alpha,\beta) \otimes \bar x\bigr) := \theta(\alpha+\beta) \bar 1 + \alpha \bar x\text{ for all } \alpha, \beta \in
\mathbbm k ,$$
$\theta \in  \mathbbm k \backslash \lbrace 0 \rbrace$ is a fixed element.  
The cosupport $\cosupp \psi_{3,\theta}$
consists of all linear operators on $\mathbbm k[x]/(x^2)$
that have matrices from the subalgebra $\left\lbrace \left( \left.\begin{smallmatrix} \alpha & \theta(\alpha+\beta) \\
0 & \beta \end{smallmatrix}\right) \right| \alpha,\beta \in \mathbbm k   \right\rbrace$
in the basis $\bar 1,\bar x$.

	\item \label{ItemTheoremDualNumbersActClassification4}
$\psi_4 \colon (\mathbbm kC_\infty)^\circ \otimes {\mathbbm k[x]/(x^2)} \to \mathbbm k[x]/(x^2)$
 where $\psi_4(\lambda \otimes \bar x):=\lambda(c)\bar x$ for all $\lambda \in (\mathbbm kC_\infty)^\circ$.	
The cosupport $\cosupp \psi_4$
is two-dimensional and consists of all linear operators on $\mathbbm k[x]/(x^2)$
that have diagonal matrices in the basis $\bar 1,\bar x$. If $\ch \mathbbm k \ne 2$, then this class
of equivalent actions
contains the $\mathbbm k C_2$-action $\psi_\mathrm{4a} \colon \mathbbm kC_2 \otimes {\mathbbm k[x]/(x^2)} \to \mathbbm k[x]/(x^2)$
where $\psi_\mathrm{4a}(c \otimes \bar x) := -\bar x$.

		\item  \label{ItemTheoremDualNumbersActClassification5} $\psi_5=
		\rho_5^\vee  (\varkappa_{\mathbbm k \langle c,d, h \rangle/I_0}\otimes \id_ {\mathbbm k[x]/(x^2)})$ where 
		$\rho_5 \colon {\mathbbm k[x]/(x^2)} \to (\mathbbm k[x]/(x^2)) \otimes \mathbbm k \langle c,d, h \rangle/I$  is the coaction from
		Theorem~\ref{TheoremDualNumbersCoactClassification} and $\mathbbm k \langle c,d, h \rangle/I$ is the Hopf algebra from Theorem~\ref{TheoremDualNumbersCoactUniversal}.
		The cosupport $\cosupp \psi_5$
		is three-dimensional and consists of all linear operators on $\mathbbm k[x]/(x^2)$
		that have upper triangular matrices in the basis $\bar 1,\bar x$. If $\ch \mathbbm k \ne 2$, then this class
		of equivalent actions
		contains the $H_4$-action $\psi_\mathrm{5a} \colon H_4 \otimes 
		\mathbbm k[x]/(x^2) \to \mathbbm k[x]/(x^2)$
		where $\psi_\mathrm{5a}(c \otimes \bar x) := -\bar x$,
		$\psi_\mathrm{5a}(v \otimes \bar x) := \bar 1$. (Note that  $\psi_\mathrm{5a}= \rho_\mathrm{5a}^{\vee}$
		 if we identify $H_4$ and $H_4^*$ via the isomorphism $\xi$ from Remark~\ref{RemarkTwoPointsH4H4Star}.)
	\end{enumerate}
	Finally, the actions $\psi_1, \psi_2, \psi_{3,\theta}, \psi_4,\psi_5$ and the corresponding Hopf algebras are $V$-universal for their cosupports $V$.
\end{theorem}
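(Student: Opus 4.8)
The plan is to run the classification strategy of Section~\ref{SectionClassificationOutline}, exactly as in the proof of Theorem~\ref{TheoremTwoPointsActClassification}, now feeding in the coaction classification of Theorem~\ref{TheoremDualNumbersCoactClassification}. First I would take an arbitrary unital $H$-action $\psi \colon H \otimes \mathbbm k[x]/(x^2) \to \mathbbm k[x]/(x^2)$ and set $V := \cosupp \psi$. Since $\mathbbm k[x]/(x^2)$ is finite dimensional, the Duality Theorem~\ref{TheoremUnivHopf(Co)actDuality} applies to $V$, giving an isomorphism ${}_\square \mathcal H(\mathbbm k[x]/(x^2),V) \cong \bigl(\mathcal H^\square(\mathbbm k[x]/(x^2),V)\bigr)^\circ$ and identifying the universal action $\psi^{\mathbf{Hopf}}_{\mathbbm k[x]/(x^2),V}$ with $\bigl(\rho^{\mathbf{Hopf}}_{\mathbbm k[x]/(x^2),V}\bigr)^\vee(\varkappa \otimes \id)$. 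Because $\cosupp \psi = V$, this forces $\cosupp \psi^{\mathbf{Hopf}}_{\mathbbm k[x]/(x^2),V} = \cosupp \rho^{\mathbf{Hopf}}_{\mathbbm k[x]/(x^2),V} = V$, so $V$ must coincide with one of the five cosupports produced in Theorem~\ref{TheoremDualNumbersCoactClassification}.

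The key point is then to show that no cosupport shrinks under dualization, so that the five coaction classes genuinely yield five action classes for every $\ch \mathbbm k$. For this I would exhibit, in each of the five cases, a representative coaction whose coacting Hopf algebra is finite dimensional: $\mathbbm k$ in case~(1); the primitive $\mathbbm k[x]/(x^2)$ and $\mathbbm k C_2$ in the two characteristic-two cases~(2) and~(3); the $\mathbbm k C_2$-coaction $\rho_\mathrm{4a}$ in case~(4); and the $H_4$-coaction $\rho_\mathrm{5a}$ in case~(5). The last two classes have infinite-dimensional \emph{universal} coacting Hopf algebras $\mathbbm k C_\infty$ and $\mathbbm k \langle c,d,h \rangle/I_0$, but the finite-dimensional representatives carry the same cosupport (one checks directly that $\rho_\mathrm{4a}^\vee$ sweeps out all diagonal operators and $\rho_\mathrm{5a}^\vee$ all upper triangular ones in the basis $\bar 1,\bar x$). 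For a finite-dimensional $H$ one has $H^\circ = H^*$, hence the dual action $\rho^\vee(\varkappa_H \otimes \id)$ has cosupport exactly $\lbrace \rho^\vee(\lambda \otimes (-)) \mid \lambda \in H^* \rbrace = \cosupp \rho$; thus the dual action attains the full cosupport $V$ and no collapse occurs.

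Finally I would make the dual Hopf algebras and their actions explicit, case by case, using $\mathbbm k^* = \mathbbm k$, the algebra $(\mathbbm k C_2)^*$, and the self-duality of the primitive two-dimensional Hopf algebra $\mathbbm k[x]/(x^2)$ in characteristic two to recover the acting Hopf algebras listed in~(1)--(3). For case~(4) the acting Hopf algebra is $(\mathbbm k C_\infty)^\circ$ with $\psi_4(\lambda)(\bar x) = \lambda(c)\bar x$, and for case~(5) it is $\bigl(\mathbbm k \langle c,d,h \rangle/I_0\bigr)^\circ$ with $\psi_5 = \rho_5^\vee(\varkappa \otimes \id)$. The explicit finite-dimensional representatives $\psi_\mathrm{4a}$ and $\psi_\mathrm{5a}$ for $\ch \mathbbm k \ne 2$ are obtained by dualizing $\rho_\mathrm{4a}$ and $\rho_\mathrm{5a}$, using the identification $H_4 \cong H_4^*$ of Remark~\ref{RemarkTwoPointsH4H4Star} (which also explains the relation $\psi_\mathrm{5a} = \rho_\mathrm{5a}^\vee$). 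The universality of each $\psi_k$ for its own cosupport then follows from $\cosupp \psi_k = V$ together with the duality identification above.

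The main obstacle I anticipate is case~(4), the infinite cyclic grading. Here the universal coacting Hopf algebra $\mathbbm k C_\infty$ is infinite dimensional, so the Duality Theorem produces the full Sweedler dual $(\mathbbm k C_\infty)^\circ$ rather than an ordinary linear dual, and I must confirm both that the finite-dimensional representative $\rho_\mathrm{4a}$ carries the same two-dimensional cosupport (so that dualization does not lose a dimension) and that the resulting action is precisely $\psi_4(\lambda)(\bar x) = \lambda(c)\bar x$. The remaining verifications---that $\psi_2$, $\psi_{3,\theta}$, $\psi_\mathrm{4a}$, $\psi_\mathrm{5a}$ are indeed actions with the stated cosupports, and the bookkeeping of the two characteristic-two subcases---are routine and entirely parallel to the proof of Theorem~\ref{TheoremTwoPointsActClassification}.
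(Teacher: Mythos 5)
Your proposal is correct and follows essentially the same route as the paper: the paper proves this theorem by declaring it ``completely analogous'' to the proof of Theorem~\ref{TheoremTwoPointsActClassification}, which is exactly the argument you run --- use the Duality Theorem to force $V=\cosupp\psi$ to be one of the cosupports from Theorem~\ref{TheoremDualNumbersCoactClassification}, then use the finite-dimensional representatives ($\rho_\mathrm{4a}$, $\rho_\mathrm{5a}$, etc.) to rule out any shrinking of the cosupport under dualization, and finally identify the dual Hopf algebras and actions case by case. Your explicit attention to case~(4), where only the representative (not the universal Hopf algebra $\mathbbm kC_\infty$) is finite dimensional, is exactly the point the ``completely analogous'' argument relies on.
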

\begin{remark}
	In the case $\ch \mathbbm k \ne 2$ the actions above were first classified in~\cite[Theorem~6.7]{ASGordienko20ALAgoreJVercruysse}.
\end{remark}
\begin{remark}
	Recall that if we identify $\lambda \in \mathbbm k[c,c^{-1}]^*$
	with the sequence $\lambda_n := \lambda\left( c^n \right)$,
	where $n\in\mathbb Z$, then $\lambda \in \mathbbm k[c,c^{-1}]^\circ$
	if and only if the sequence $\lambda$ satisfies some linear recurrence relation with constant coefficients. Denote by $\mathbbm k^\times$ the group of invertible elements of the field $\mathbbm k$.
	The sequences $(n^k \gamma^n)_n$, where $k\in \mathbb Z_+$, $\gamma\in \mathbbm k^\times$,
	form a basis in $\mathbbm k[c,c^{-1}]^\circ$. Note that $(n)_n$ is a primitive element
	and $(\mu^n)_n$ are group-like elements of $\mathbbm k[c,c^{-1}]^\circ$.
	Hence we have a Hopf algebra isomorphism	
	  $(\mathbbm kC_\infty)^\circ = \mathbbm k[c,c^{-1}]^\circ \cong \mathbbm k[t]\otimes \mathbbm k\mathbbm k^\times$
	where $\Delta t = t\otimes 1 + 1 \otimes t$, 
	$ (n^k \gamma^n)_n \mapsto t^k \otimes \gamma$
	 for all $\gamma \in \mathbbm k^\times$, $k\in\mathbb Z_+$. Under this isomorphism, $\psi_4\bigl((t^k \otimes \gamma)\otimes \bar x\bigr) = \gamma \bar x$.
\end{remark}
\begin{remark}
	(Co)actions on $\mathbbm k[x]/(x^2)$ are classified in Theorems~\ref{TheoremDualNumbersCoactClassification} and~\ref{TheoremDualNumbersActClassification} up to an equivalence via $\id_{\mathbbm k[x]/(x^2)}$.
	Note that $\Aut(\mathbbm k[x]/(x^2)) \cong \mathbbm k^\times$ where $\gamma \cdot \bar 1 :=\bar 1 $ and $\gamma \cdot \bar x :=\gamma \bar x $ for all $\gamma \in\mathbbm k^\times$.
	All the cosupports in Theorems~\ref{TheoremDualNumbersCoactClassification} and~\ref{TheoremDualNumbersActClassification} are stable under this $\Aut( \mathbbm k[x]/(x^2))$-action
	except the ones in the case~\ref{ItemTheoremDualNumbersCoactClassification3},
	which can be identified for different~$\theta$ by the $\Aut( \mathbbm k[x]/(x^2))$-action.
	This implies that, up to an equivalence via an arbitrary isomorphism, we have exactly five different classes of equivalent coactions and five ones of actions. They are all listed
	in Theorems~\ref{TheoremDualNumbersCoactClassification} and~\ref{TheoremDualNumbersActClassification}.
\end{remark}

We conclude this section by proving that the Manin universal coacting Hopf algebras of the set of two points and of the dual numbers are different.

\begin{theorem}\label{TheoremManinForTwoPointsAndDualNumbersAreDifferent}
	The Hopf algebras $\mathbbm k\langle c,d,h \rangle/I$ from Theorem~\ref{TheoremTwoPointsCoactUniversal}
	and $\mathbbm k\langle c,d,h \rangle/I_0$ from Theorem~\ref{TheoremDualNumbersCoactUniversal}
	are not isomorphic for any field $\mathbbm k$.
\end{theorem}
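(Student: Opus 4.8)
The plan is to separate the two Hopf algebras already at the level of their underlying associative algebras, by computing their \emph{abelianizations}. Write $A := \mathbbm k\langle c,d,h\rangle/I$ and $A_0 := \mathbbm k\langle c,d,h\rangle/I_0$. Any Hopf algebra isomorphism $A_0 \mathrel{\widetilde\to} A$ is in particular an isomorphism of associative algebras, and every algebra isomorphism carries the commutator ideal onto the commutator ideal, hence induces an isomorphism of abelianizations $A_0/[A_0,A_0] \mathrel{\widetilde\to} A/[A,A]$. Therefore it suffices to show that these two commutative $\mathbbm k$-algebras are not isomorphic, and in fact I would show that one is finite dimensional while the other is not.

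First I would compute $A/[A,A]$. Passing to the commutative quotient, $c$ becomes invertible with inverse $d$, and the remaining relations read $h^2 = h$ and $2hc + c^2 - c = 0$ (the latter coming from $hc + ch + c^2 - c$ together with $hc = ch$). If $\ch \mathbbm k \ne 2$, the second relation gives $h = \tfrac{1-c}{2}$, and substituting this into $h^2 = h$ forces $c^2 = 1$; so the abelianization collapses to $\mathbbm k[c]/(c^2-1)$, which is two dimensional. If $\ch \mathbbm k = 2$, the relation $2hc + c^2 - c = 0$ becomes $c^2 = c$, whence $c = 1$ by invertibility, and the abelianization is $\mathbbm k[h]/(h^2-h)$, again two dimensional. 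Thus $\dim_{\mathbbm k} A/[A,A] = 2$ regardless of the characteristic.

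Next I would compute $A_0/[A_0,A_0]$ and show it is infinite dimensional. Here the commutative relations are $h^2 = 0$ and $2hc = 0$, together with $d = c^{-1}$. The assignment $c \mapsto c$, $d\mapsto c^{-1}$, $h \mapsto 0$ kills all generators of $I_0$, so it defines an algebra homomorphism out of $A_0$; since its target $\mathbbm k[c,c^{-1}]$ is commutative, it factors through the commutator ideal and yields a surjection $A_0/[A_0,A_0] \twoheadrightarrow \mathbbm k[c,c^{-1}]$. As the target is infinite dimensional, so is $A_0/[A_0,A_0]$. (More precisely, for $\ch \mathbbm k \ne 2$ one gets $A_0/[A_0,A_0] \cong \mathbbm k[c,c^{-1}]$, while for $\ch\mathbbm k = 2$ one gets $\mathbbm k[c,c^{-1}]\otimes \mathbbm k[h]/(h^2)$.)

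Comparing the two computations, $\dim_{\mathbbm k} A/[A,A] = 2$ while $\dim_{\mathbbm k} A_0/[A_0,A_0] = \infty$, so the abelianizations are not isomorphic and hence neither are $A$ and $A_0$. The only genuinely delicate point is the collapse in the first computation: it is the interaction of the idempotent relation $h^2 = h$ with the mixed relation that forces $c^2 = 1$ (respectively $c = 1$) and thereby makes $A/[A,A]$ finite dimensional, whereas in $A_0$ the nilpotent relation $h^2 = 0$ imposes no relation among the powers of $c$, leaving the abelianization infinite dimensional. Everything else reduces to the routine verification that the indicated maps are well defined on the quotients.
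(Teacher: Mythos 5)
Your proof is correct, and it takes a genuinely different route from the paper's. You work entirely at the level of the underlying associative algebras: an isomorphism would induce an isomorphism of largest commutative quotients, and you show $\bigl(\mathbbm k\langle c,d,h\rangle/I\bigr)/[\,\cdot\,,\cdot\,]$ collapses to a two-dimensional algebra (the interaction of $h^2=h$ with $2hc+c^2-c=0$ forces $c^2=1$, resp.\ $c=1$ in characteristic $2$), while $\mathbbm k\langle c,d,h\rangle/I_0$ admits the commutative quotient $\mathbbm k[c,c^{-1}]$ via $h\mapsto 0$, so its abelianization is infinite dimensional. Both computations check out, and the argument needs only the upper bound $\dim\leqslant 2$ on the first abelianization, which your presentation of the commutative quotient supplies. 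The paper instead exploits the Hopf structure: it uses that $\mathbbm k C_\infty\cong\mathbbm k[t,t^{-1}]$ is a Hopf quotient of $\mathbbm k\langle c,d,h\rangle/I_0$ (this is the coaction $\rho_4$ from the dual-numbers classification), and then shows that any Hopf algebra homomorphism from $\mathbbm k\langle c,d,h\rangle/I$ to $\mathbbm k[t,t^{-1}]$ must send $\bar c$ to a group-like $t^k$ and, by commutativity together with $h^2=h$ (or the absence of nonzero primitives in a group algebra when $\ch\mathbbm k=2$), must kill $\bar h$ and force $k=0$, contradicting surjectivity. Your approach is more elementary and proves the stronger statement that the two Hopf algebras are not even isomorphic as associative algebras; the paper's approach stays within the Hopf-theoretic framework and reuses the classification results already established in the text.
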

\begin{proof}
	Suppose $\mathbbm k\langle c,d,h \rangle/I \cong \mathbbm k\langle c,d,h \rangle/I_0$.
	In the case~\ref{ItemTheoremDualNumbersCoactClassification4} of Theorem~\ref{TheoremDualNumbersCoactClassification}
	we have shown that the Hopf algebra $\mathbbm k C_\infty \cong \mathbbm k[t,t^{-1}]$ is a homomorphic image of $\mathbbm k\langle c,d,h \rangle/I_0$. Therefore, there exists a surjective Hopf algebra homomorphism
	$\varphi \colon \mathbbm k\langle c,d,h \rangle/I \to \mathbbm k[t,t^{-1}]$.
	Recall that different group-like elements are linearly independent, which implies that $\lbrace t^n \mid n\in\mathbb Z \rbrace$ is the set of  group-like elements in $\mathbbm k[t,t^{-1}]$.  Being a coalgebra homomorphism, $\varphi$
	maps group-like elements to group-like elements. Thus $\varphi(\bar c) = t^k$
	for some $k\in\mathbb Z$. The algebra $\mathbbm k[t,t^{-1}]$ is commutative, hence
	the relation $\bar h \bar c + \bar c \bar h + \bar c^2  = \bar c$
	implies that $2\varphi(\bar h) = \varphi(\bar 1) - \varphi(\bar c)$.
	
	If $\ch \mathbbm k \ne 2$, then $\varphi(\bar h) = \frac{1}{2}(\varphi(\bar 1) - \varphi(\bar c))$,
	and from relation $\bar h^2 = \bar h$ it follows that $\varphi(\bar c)^2 = \varphi(\bar 1)$,
	i.e. $t^{2k}=1$. Hence $k=0$, $\varphi(\bar h)=0$, $\varphi(\bar c)=1$ and the image of $\varphi$ is one-dimensional, which contradicts the surjectivity of $\varphi$.
	
	If $\ch \mathbbm k = 2$, then $\varphi(\bar c) = \varphi(\bar 1)=1$, i.e. $\varphi(\bar h)$ is a primitive element
	of the group algebra $\mathbbm k[t,t^{-1}]$. An explicit check shows that in every group algebra there are no nonzero primitive elements. Hence again $\varphi(\bar h)=0$, $\varphi(\bar c)=1$, which contradicts the surjectivity of $\varphi$.
	
	 Therefore, $\mathbbm k\langle c,d,h \rangle/I \ncong \mathbbm k\langle c,d,h \rangle/I_0$.
\end{proof}	 
\begin{remark}
	As pointed out by the referee, an alternative way to prove Theorem~\ref{TheoremManinForTwoPointsAndDualNumbersAreDifferent} is to quotient both Hopf algebras by their ideals generated by commutators. This yields the Hopf algebras corresponding to the automorphism group schemes of $\mathbbm k \times \mathbbm k$
	and $\mathbbm k[x]/(x^2)$, respectively, which are not isomorphic.
\end{remark}

\section{Straight line}

Recall that $\mathbbm k [x]$ is the algebra of regular functions on the straight line over a field $\mathbbm k$.

\begin{theorem}\label{TheoremYuIManinHopfAlgebraStraightLineDoesNotExist} The (ungraded) Manin universal coacting Hopf algebra
	$\mathcal H^\square(\mathbbm k [x])$ on $\mathbbm k [x]$ does not exist for any field $\mathbbm k$.
\end{theorem}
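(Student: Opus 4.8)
The plan is to argue by contradiction, exploiting the fact that $V_1(\mathbbm{k}[x])$ fails to be pointwise finite dimensional: operators in $V_1$ can send $x$ to polynomials of arbitrarily high degree, which I expect to be incompatible with the existence of a single universal coaction, whose defining element is a finite tensor. Concretely, suppose $\mathcal{H} := \mathcal{H}^\square(\mathbbm{k}[x], V_1(\mathbbm{k}[x]))$ exists, with universal coaction $\rho^{\mathbf{Hopf}} \colon \mathbbm{k}[x]\to\mathbbm{k}[x]\otimes\mathcal{H}$. Since $\rho^{\mathbf{Hopf}}(x)$ is a single element of $\mathbbm{k}[x]\otimes\mathcal{H}$, it is a finite sum $\rho^{\mathbf{Hopf}}(x)=\sum_{i=0}^{d}x^i\otimes h_i$ for some $d$ and $h_i\in\mathcal{H}$. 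For any unital coaction $\rho\colon\mathbbm{k}[x]\to\mathbbm{k}[x]\otimes H$ we have $\rho(1)=1\otimes 1_H$, so $1$ is an eigenvector of every operator in $\cosupp\rho$ and hence $\cosupp\rho\subseteq V_1(\mathbbm{k}[x])$; the universal property then yields a Hopf algebra homomorphism $\varphi$ with $\rho(x)=(\id\otimes\varphi)\rho^{\mathbf{Hopf}}(x)=\sum_{i=0}^{d}x^i\otimes\varphi(h_i)$. Thus $\rho(x)$ always lies in the fixed finite-dimensional subspace $\bigoplus_{i=0}^{d}\mathbbm{k}x^i\otimes H$; that is, the first tensor factor of $\rho(x)$ has degree at most $d$ for \emph{every} coaction. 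It therefore suffices to produce, for each $N$, a unital Hopf algebra coaction on $\mathbbm{k}[x]$ whose value on $x$ genuinely involves $x^N$.

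For this I would look for coactions of the shape $\rho_N(x)=x\otimes c+x^N\otimes w$ with $c$ group-like and $w\neq0$. Exactly as in Lemma~\ref{LemmaTwoPointsCoactRelations}, the comodule axioms translate into explicit conditions on $c,w$: the counit forces $\varepsilon(c)=1$, $\varepsilon(w)=0$; and coassociativity, using that $\rho_N$ is an algebra map so that $\rho_N(x^N)=\rho_N(x)^N$, forces $\Delta w=w\otimes c+c^N\otimes w$ (so $w$ is a $(c^N,c)$-skew-primitive) together with the requirement that $\rho_N(x)^N$ contain no term of $x$-degree exceeding $N$. Writing $\rho_N(x)^N$ as a sum over the number $k$ of occurrences of $x^N\otimes w$, the term with $k$ such factors has $x$-degree $N+k(N-1)$, so all of these with $1\le k\le N$ must vanish. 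The $k=N$ term gives $w^N=0$, the $k=1$ term gives the key relation $\sum_{j=0}^{N-1}c^{\,j}w\,c^{\,N-1-j}=0$, and the intermediate ones are governed by quantum-binomial coefficients.

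The heart of the proof is then to exhibit a Hopf algebra $H_N$ in which such a nonzero $w$ lives. In positive characteristic $p$ this is completely elementary: take $N=p^k$, let $c$ be group-like and let $w$ commute with $c$, impose $w^{p^k}=0$ and $\Delta w=w\otimes c+c^{p^k}\otimes w$, and use the freshman's dream $\rho_N(x)^{p^k}=x^{p^k}\otimes c^{p^k}+x^{p^{2k}}\otimes w^{p^k}=x^{p^k}\otimes c^{p^k}$ to see that all higher-degree terms vanish; the corresponding commutative Hopf algebra (with $Sc=c^{-1}$ and $Sw=-c^{-p^k-1}w$) and coaction are then immediate, and the degrees $p^k$ are unbounded. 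Over a field containing a primitive $N$-th root of unity $\zeta$ one argues in the same spirit with a generalized Taft algebra: imposing $cw=\zeta wc$ and $w^N=0$ makes $x\otimes c$ and $x^N\otimes w$ $\zeta$-commute, so the $q$-binomial theorem gives $\rho_N(x)^N=x^N\otimes c^N+x^{N^2}\otimes w^N=x^N\otimes c^N$ because $\binom{N}{k}_\zeta=0$ for $0<k<N$, while the relation $\sum_j c^{\,j}w\,c^{\,N-1-j}=0$ reduces to the vanishing geometric sum $\sum_{j}\zeta^{-j}=0$.

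The main obstacle is to carry out this construction \emph{uniformly over an arbitrary field}, in particular over fields of characteristic $0$ lacking higher roots of unity: there a single group-like $c$ cannot $\zeta$-commute with $w$ inside $\mathbbm{k}$, and one must instead realize $w$ inside a pointed Hopf algebra in which $\mathrm{Ad}(c)$ acts on the span of $w$ with order $N$ (so that $\sum_{j}\mathrm{Ad}(c)^{\,j}(w)=0$ yields the $k=1$ relation) while the remaining relations $w^N=0$ and their compatibility with $\Delta$ stay consistent with $w\neq0$ --- precisely the data furnished by bosonizations of suitable Yetter--Drinfeld modules (Nichols-type Hopf algebras), which are available over any field. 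Once such an $H_N$ with $w\neq0$ is produced for infinitely many $N$, the coactions $\rho_N$ have first-factor degree $N\to\infty$, contradicting the bound $d$ obtained above; hence $\mathcal{H}^\square(\mathbbm{k}[x],V_1(\mathbbm{k}[x]))$ cannot exist.
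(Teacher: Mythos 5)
Your overall strategy is the paper's: if the universal coacting Hopf algebra existed, then $\rho^{\mathbf{Hopf}}(x)$ would be a finite tensor of first-factor degree at most $d$, every unital coaction would factor through it, and so it suffices to exhibit unital coactions whose value on $x$ has unbounded degree in the first factor. Your reduction of the problem to finding a Hopf algebra with a group-like $c$ and a nonzero $(c^N,c)$-skew-primitive $w$ satisfying $(x\otimes c+x^N\otimes w)^N=x^N\otimes c^N$ is also correct. The gap is in the last step: you only produce such data when $\ch\mathbbm k=p>0$ (via $N=p^k$) or when $\mathbbm k$ contains a primitive $N$-th root of unity, and for a general field --- already for $\mathbbm k=\mathbb Q$, whose only roots of unity are $\pm1$ --- you concede the construction is missing and appeal to bosonizations of Yetter--Drinfeld modules without carrying anything out. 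Since the theorem is claimed for every field, the proof is incomplete exactly where the work lies. Note that within your one-dimensional ansatz the constraint is genuine: if $cwc^{-1}=\lambda w$, the $k=1$ relation forces $\sum_{j=0}^{N-1}\lambda^j=0$, so over $\mathbb Q$ in characteristic $0$ the only option is $\lambda=-1$ with $N$ even --- which, as it turns out, is all you need.

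The missing observation, and the way the paper closes the argument, is that the exponent $N$ in $x^N\otimes w$ need not be tied to the order of $\lambda$ or to a nilpotency index: take $N=2m$ even and use the Sweedler algebra $H_4$ (defined over any field) with $w=cv$. Then $c(cv)+(cv)c=0$ and $(cv)^2=0$, so $\rho_m(x):=x\otimes c+x^{2m}\otimes cv$ satisfies $\rho_m(x)^2=x^2\otimes 1$ and hence $\rho_m(x)^{2m}=x^{2m}\otimes 1=x^{2m}\otimes c^{2m}$; coassociativity and counitality are then a one-line check on the generator $x$, since $\Delta(cv)=cv\otimes c+1\otimes cv$. This yields unital coactions of arbitrarily large degree over every field using a single four-dimensional Hopf algebra, completing the contradiction. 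With this substitution your argument becomes the paper's proof.
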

\begin{proof}
	Suppose that $\mathcal H^\square(\mathbbm k [x])$ does exist.
	Let $$\rho^{\mathbf{Hopf}}_{\mathbbm k [x]} \colon \mathbbm k [x] \to \mathbbm k [x] \otimes \mathcal H^\square(\mathbbm k [x])$$ be the corresponding
	universal coaction. Then $\rho^{\mathbf{Hopf}}_{\mathbbm k [x]}(x) = \sum\limits_{k=0}^n x^k \otimes h_k$
	for some $n\in\mathbb Z_+$, $h_k \in \mathcal H^\square(\mathbbm k [x])$.
	
	Choose any $m \in\mathbb N$ such that $2m > n$ and consider
	the unital algebra homomorphism $\rho_m \colon \mathbbm k [x] \to \mathbbm k [x] \otimes H_4$
	defined by $\rho_m (x):= x \otimes c + x^{2m} \otimes cv$.
	
	We claim that $\rho_m$ is an $H_4$-coaction. Since $\rho_m$, $\varepsilon$ and $\Delta$ are unital algebra homomorphisms,
	it is sufficient to verify the comodule axioms on $x$. First of all, note that $\rho_m \left(x^2\right) = x^2 \otimes 1$.
	Hence
	 $$(\rho_m \otimes \id_{H_4})\rho_m = x \otimes c \otimes c + x^{2m} \otimes cv \otimes c + x^{2m} \otimes 1 \otimes cv
	 = (\id_{\mathbbm k [x]}  \otimes \Delta)\rho_m.$$
	 In addition, the counit axiom holds trivially. Hence		
	 $\rho_m$ is indeed a coaction.
	 
	At the same time, there exists no Hopf algebra homomorphism $\varphi \colon 
	\mathcal H^\square(\mathbbm k [x]) \to H_4$ such that
	$\rho_m = (\id_{\mathbbm k [x]} \otimes \varphi)\rho^{\mathbf{Hopf}}_{\mathbbm k [x]}$
	since $2m > n$. We get a contradiction. Therefore $\mathcal H^\square(\mathbbm k [x])$ does not exist.
\end{proof}	

\section{The cosupport may reduce under the dualization}\label{SectionCosuppReduces}

Given a Hopf algebra coaction $\rho \colon A \to A \otimes H$, the dual action
$\rho^\vee(\varkappa_{H} \otimes \id_A)
\colon H^\circ \otimes A \to A$
 cannot have a larger cosupport. Below we prove necessary and sufficient conditions for a coaction resulting from a finite grading and the dual action to have the same cosupport.

\begin{theorem}
	Let $\Gamma \colon A=\bigoplus_{g \in G} A^{(g)}$ be a grading by a group $G$ on an algebra $A$ over a field $\mathbbm k$
	such that the support $\supp \Gamma$ is finite. Let $\rho \colon A \to A \otimes \mathbbm k G$ be the corresponding $\mathbbm k G$-coaction, i.e. $\rho(a):=a\otimes g$ for all $a \in A^{(g)}$, $g\in G$. 
	Then $\cosupp \rho = \cosupp \bigl(\rho^\vee(\varkappa_{\mathbbm k G} \otimes \id_A)\bigr)$
	if and only if there exists an ideal $I\subseteq \mathbbm k G$, $\dim(\mathbbm k G/I) < +\infty$,
	such that the images of the elements of $\supp \Gamma$ are linearly independent in $\mathbbm k G/I$.	
\end{theorem}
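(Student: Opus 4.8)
The plan is to translate both cosupports into concrete linear-algebra data attached to the finite set $\supp\Gamma$, and then recognize the desired equality as a surjectivity condition on the finite dual $(\mathbbm k G)^\circ$.

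First I would compute $\cosupp\rho$ explicitly. For $a\in A^{(g)}$ one has $\rho(a)=a\otimes g$, so $\rho^\vee(\lambda\otimes a)=\lambda(g)\,a$ for every $\lambda\in(\mathbbm k G)^*$; thus $\rho^\vee(\lambda\otimes(-))$ is the operator acting as the scalar $\lambda(g)$ on each $A^{(g)}$, and two such operators coincide precisely when the corresponding functions $g\mapsto\lambda(g)$ agree on $\supp\Gamma$. Since $\supp\Gamma$ is finite and $\supp\Gamma\subseteq G$ is part of a basis of $\mathbbm k G$, the tuples $(\lambda(g))_{g\in\supp\Gamma}$ exhaust $\mathbbm k^{\supp\Gamma}$ as $\lambda$ runs over $(\mathbbm k G)^*$. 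Hence $\rho^\vee(\lambda\otimes(-))\mapsto(\lambda(g))_{g\in\supp\Gamma}$ identifies $\cosupp\rho$ with $\mathbbm k^{\supp\Gamma}$. Under the very same identification, $\cosupp\bigl(\rho^\vee(\varkappa_{\mathbbm k G}\otimes\id_A)\bigr)$ becomes the image of the restriction map $R\colon(\mathbbm k G)^\circ\to\mathbbm k^{\supp\Gamma}$, $\lambda\mapsto(\lambda(g))_{g\in\supp\Gamma}$, because this cosupport is obtained by letting $\lambda$ range only over $\varkappa_{\mathbbm k G}\bigl((\mathbbm k G)^\circ\bigr)\subseteq(\mathbbm k G)^*$. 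As $(\mathbbm k G)^\circ\subseteq(\mathbbm k G)^*$, the inclusion $\cosupp\bigl(\rho^\vee(\varkappa_{\mathbbm k G}\otimes\id_A)\bigr)\subseteq\cosupp\rho$ is automatic, and equality holds if and only if $R$ is surjective. This reduces the theorem to the equivalence: $R$ is surjective if and only if there is a finite-codimensional two-sided ideal $I\subseteq\mathbbm k G$ in which the images of $\supp\Gamma$ are linearly independent.

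I would then prove this equivalence by a dual-basis computation. Write $\supp\Gamma=\{g_1,\dots,g_r\}$. If $R$ is surjective, pick $\lambda_1,\dots,\lambda_r\in(\mathbbm k G)^\circ$ mapping to the standard basis of $\mathbbm k^{\supp\Gamma}$, i.e. $\lambda_i(g_j)=\delta_{ij}$; each $\lambda_i$ vanishes on some finite-codimensional two-sided ideal $I_i$, and $I:=I_1\cap\dots\cap I_r$ is again a finite-codimensional two-sided ideal, since $\mathbbm k G/I$ embeds into $\bigoplus_i\mathbbm k G/I_i$. If $\sum_j c_j g_j\in I$, applying each $\lambda_i$ (which kills $I$) gives $c_i=0$, so the images $\overline{g_1},\dots,\overline{g_r}$ are linearly independent in $\mathbbm k G/I$. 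Conversely, given such an $I$, the functionals on the finite-dimensional space $\mathbbm k G/I$ dual to the linearly independent images $\overline{g_1},\dots,\overline{g_r}$ pull back along $\mathbbm k G\twoheadrightarrow\mathbbm k G/I$ to elements $\lambda_i\in(\mathbbm k G)^\circ$ with $\lambda_i(g_j)=\delta_{ij}$, whence $R$ hits every standard basis vector and is surjective.

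The only genuinely delicate point is the bookkeeping with the finite dual: one must verify that the functionals produced in each direction actually lie in $(\mathbbm k G)^\circ$ rather than merely in $(\mathbbm k G)^*$. This is exactly why the ideals $I_i$ (and hence $I$) must be \emph{two-sided} and of \emph{finite codimension}; the stability of this property under finite intersections, together with the identification of $(\mathbbm k G/I)^*$ with the functionals in $(\mathbbm k G)^\circ$ annihilating $I$, are the facts that make the two conditions match. Everything else is the routine identification of the two cosupports with subspaces of $\mathbbm k^{\supp\Gamma}$ carried out in the first step.
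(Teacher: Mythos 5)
Your proposal is correct and follows essentially the same route as the paper: both identify $\cosupp\rho$ with functions on the finite support and reduce the equality to the existence of functionals $\lambda_i\in(\mathbbm k G)^\circ$ with $\lambda_i(g_j)=\delta_{ij}$, proving the forward direction by intersecting the finite-codimensional ideals annihilated by the $\lambda_i$ and the converse by pulling back a dual basis from $(\mathbbm k G/I)^*$. The only cosmetic difference is that you package the argument as surjectivity of a restriction map $R$, which the paper leaves implicit.
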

\begin{proof}
	The algebra $\cosupp \rho$ consists of all $\mathbbm k$-linear operators $A\to A$ that are scalar on each $A^{(g)}$,
	while $\cosupp \bigl(\rho^\vee(\varkappa_{\mathbbm k G} \otimes \id_A)\bigr)$ is the subalgebra of 
	$\cosupp \rho$ that consists of all such operators that correspond to $\lambda \in A^{\circ}$.
	Let $\supp \Gamma = \lbrace g_1, \ldots, g_s \rbrace$.
	
	Suppose $\cosupp \rho = \cosupp \bigl(\rho^\vee(\varkappa_{\mathbbm k G} \otimes \id_A)\bigr)$.
	Then there exists $\lambda_i \in  (\mathbbm k G)^{\circ}$ such that $\lambda_i(g_j)=\delta_{ij}:=\left\lbrace\begin{array}{c}
	1 \text{ if } i=j,\\
	0 \text{ if } i \ne j.
	\end{array}\right.$ By the definition of $(\mathbbm k G)^{\circ}$, there exist ideals $I_i \subseteq \mathbbm k G$
	such that $\dim (\mathbbm k G/I_i) < +\infty$ and  $I_i \subseteq \Ker \lambda_i$.
	Denote $I:= I_1 \cap \dots \cap I_s$. Then $\dim (\mathbbm k G/I) < +\infty$
	and $\lambda_i$ induce $\bar \lambda_i \in  (\mathbbm k G/I)^*$ such that $\bar\lambda_i(\bar g_j)=\delta_{ij}$
	where $\bar g_i$ is the image of $g_i$ in $\mathbbm k G/I$.
	In particular, $\bar g_1, \ldots, \bar g_s$ are linearly independent.
	
	Suppose now that $\bar g_1, \ldots, \bar g_s$ are linearly independent in $\mathbbm k G/I$
	for some ideal $I\subseteq \mathbbm k G$ where $\dim(\mathbbm k G/I) < +\infty$.
	Choose any $\bar \lambda_i \in  (\mathbbm k G/I)^*$ such that $\bar\lambda_i(\bar g_j)=\delta_{ij}$.
	Denote by $\lambda_i \in (\mathbbm k G)^*$ the induced linear functions, which are zero on $I$.
	Then $\lambda_i \in (\mathbbm k G)^\circ$ and $\rho^\vee(\lambda_1\otimes (-)), \dots, \rho^\vee(\lambda_s\otimes (-))$
	form a basis in both  $\cosupp \rho$ and $\cosupp \bigl(\rho^\vee(\varkappa_{\mathbbm k G} \otimes \id_A)\bigr)$.
	Hence $\cosupp \rho = \cosupp \bigl(\rho^\vee(\varkappa_{\mathbbm k G} \otimes \id_A)\bigr)$.
\end{proof}	
\begin{corollary}\label{CorollaryNonTrivialFinDimReps}
	If $\cosupp \rho = \cosupp \bigl(\rho^\vee(\varkappa_{\mathbbm k G} \otimes \id_A)\bigr)$ and $|\supp \Gamma| \geqslant 2$,
	then there exists a nontrivial finite dimensional representation of $G$.
\end{corollary}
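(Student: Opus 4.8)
The plan is to feed the ideal produced by the preceding theorem into a standard construction that turns a finite-dimensional quotient of a group algebra into a representation. By the theorem stated just above, the hypothesis $\cosupp \rho = \cosupp \bigl(\rho^\vee(\varkappa_{\mathbbm k G} \otimes \id_A)\bigr)$ supplies a two-sided ideal $I \subseteq \mathbbm k G$ with $\dim_{\mathbbm k}(\mathbbm k G/I) < +\infty$ such that the images $\bar g_1, \dots, \bar g_s$ of the support elements $\supp \Gamma = \lbrace g_1, \dots, g_s \rbrace$ are linearly independent in $\mathbbm k G/I$. I would set $d := \dim_{\mathbbm k}(\mathbbm k G/I)$ and work entirely inside this finite-dimensional algebra.

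First I would note that the canonical projection $\mathbbm k G \to \mathbbm k G/I$ restricts to a group homomorphism $\pi_0 \colon G \to (\mathbbm k G/I)^\times$, $g \mapsto \bar g$, because every $g\in G$ is invertible in $\mathbbm k G$ (with inverse $g^{-1}$), so its image $\bar g$ is invertible in the quotient. Composing $\pi_0$ with the left regular representation $\ell \colon \mathbbm k G/I \to \End_{\mathbbm k}(\mathbbm k G/I) \cong M_d(\mathbbm k)$, where $\ell(a)$ is left multiplication by $a$, yields a $d$-dimensional representation $\pi := \ell \circ \pi_0 \colon G \to \GL_d(\mathbbm k)$. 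Equivalently, one views $\mathbbm k G/I$ as a finite-dimensional $\mathbbm k G$-module via left multiplication and restricts the action to the group elements.

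Then I would verify that $\pi$ is nontrivial, and this is precisely where the assumption $s = |\supp \Gamma| \geqslant 2$ is used. Since $\bar g_1, \dots, \bar g_s$ are linearly independent and $s \geqslant 2$, they cannot all equal $\bar 1$: at most one of them can coincide with $\bar 1$, so there is an index $i$ with $\bar g_i \neq \bar 1$. For this $i$ the operator $\pi(g_i) = \ell(\bar g_i)$ maps $\bar 1 \mapsto \bar g_i \neq \bar 1$, so $\pi(g_i)$ is not the identity of $\GL_d(\mathbbm k)$ (note $\bar 1 \neq 0$, as linear independence of the $\bar g_i$ forces $\mathbbm k G/I \neq 0$). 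Hence $\pi$ is a nontrivial finite-dimensional representation of $G$.

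The argument becomes essentially immediate once the preceding theorem is invoked, so I do not expect a genuine obstacle; the only step that demands care is the nontriviality check, which must really use $|\supp \Gamma| \geqslant 2$. For a single support element (the case $s=1$) one could have $\bar g_1 = \bar 1$, in which case the regular representation of $\mathbbm k G/I$ need not detect any non-identity element of $G$ and the conclusion could fail, so the hypothesis $|\supp \Gamma| \geqslant 2$ is exactly what rules this out.
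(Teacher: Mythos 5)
Your proof is correct and follows exactly the paper's route: the paper's own (one-sentence) proof also takes the ideal $I$ supplied by the preceding theorem and considers the representation of $G$ on $\mathbbm k G/I$ by left multiplication. You merely spell out the nontriviality check (at most one of $s\geqslant 2$ linearly independent images can equal $\bar 1$) that the paper leaves implicit.
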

\begin{proof}
	It is sufficient to consider the linear representation $\varphi \colon G \to \GL(\mathbbm k G/I)$  of $G$ on $\mathbbm k G/I$ by the left multiplication:	\begin{equation}\label{EqGRepGILeftMul}\varphi(g)(t+I):= gt+I\text{ for all }g,t\in G.\end{equation}
\end{proof}

\begin{theorem}\label{TheoremInfinGrNoFinQuotientsFinDualTriv}
	Let $G$ be a finitely generated group that has no nontrivial finite quotients
	and let $\mathbbm k$ be a field. Then $(\mathbbm k G)^\circ = \mathbbm k \varepsilon \cong \mathbbm k$
	as Hopf algebras where $ \varepsilon \colon \mathbbm k G \to \mathbbm k$ is the counit (augmentation) of $ \mathbbm k G $.
\end{theorem}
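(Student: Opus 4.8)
The plan is to reduce the statement to a claim about the finite-dimensional quotients of $\mathbbm k G$ and then bring in residual finiteness of linear groups. Fix an arbitrary $\lambda \in (\mathbbm k G)^\circ$. By definition of the finite dual there is a two-sided ideal $I \subseteq \mathbbm k G$ with $\dim(\mathbbm k G/I) < +\infty$ and $\lambda|_I = 0$, so $\lambda$ factors through the finite-dimensional algebra $A := \mathbbm k G/I$. Let $\bar G$ denote the image of $G$ in $A$. Since each $g\in G$ is invertible in $\mathbbm k G$, its image lies in the unit group $A^\times$; hence $\bar G$ is a subgroup of $A^\times$, it spans $A$ linearly (because $G$ spans $\mathbbm k G$ and $\mathbbm k G \to A$ is surjective), and it is finitely generated, being the image of the finitely generated group $G$. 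Thus it suffices to prove that every such $A$ is one-dimensional.

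The key step is to realize $\bar G$ as a finitely generated \emph{linear} group over $\mathbbm k$. The left regular representation $A \to \End_{\mathbbm k}(A)$, $a \mapsto L_a$ (with $L_a(y) := ay$), is an injective unital algebra homomorphism, since $L_a(1_A) = a$ forces $L_a = L_b \Rightarrow a = b$; it carries $A^\times$ into $\GL(A)$. Composing, $\bar G$ embeds into $\GL_{\dim A}(\mathbbm k)$. Now I would invoke Mal'cev's theorem, that every finitely generated linear group over a field is residually finite, to conclude that $\bar G$ is residually finite.

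Next I would feed in the hypothesis on $G$. Any finite quotient of $\bar G$ is, via the surjection $G \twoheadrightarrow \bar G$, also a finite quotient of $G$, hence trivial; so $\bar G$ itself has no nontrivial finite quotients. But a residually finite group whose finite quotients are all trivial is trivial: residual finiteness says the intersection of its finite-index normal subgroups is $\{1\}$, while the hypothesis says the only such subgroup is the whole group, so that intersection equals $\bar G$, forcing $\bar G = \{1\}$. Therefore every $g\in G$ maps to $1_A$, whence $\sum_g c_g\, g \mapsto \bigl(\sum_g c_g\bigr) 1_A = \varepsilon\bigl(\sum_g c_g\, g\bigr)\,1_A$; that is, $\mathbbm k G \to A$ is $\varepsilon$ followed by $\mathbbm k \hookrightarrow A$, and by surjectivity $A = \mathbbm k 1_A \cong \mathbbm k$. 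Consequently $\lambda \in \mathbbm k\varepsilon$, and since $\lambda$ was arbitrary, $(\mathbbm k G)^\circ = \mathbbm k \varepsilon$. As $\varepsilon$ is a group-like element of the dual, $\mathbbm k\varepsilon$ is the one-dimensional Hopf algebra $\cong \mathbbm k$.

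The main obstacle is supplying the one nontrivial external input, Mal'cev's residual finiteness theorem, and making sure it is used over an \emph{arbitrary} field $\mathbbm k$ (including positive characteristic), which is the version one needs here. Everything else is routine bookkeeping with the finite dual; the only points requiring care are that the left regular embedding genuinely lands $\bar G$ in $\GL$ over the same field $\mathbbm k$, and that finite quotients of $\bar G$ pull back along $G \twoheadrightarrow \bar G$ to finite quotients of $G$.
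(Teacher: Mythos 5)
Your proof is correct and follows essentially the same route as the paper: factor $\lambda$ through a finite-dimensional quotient of $\mathbbm k G$, turn the image of $G$ into a finitely generated linear group via left multiplication, and use Mal'cev to conclude that image is trivial, so $\lambda\in\mathbbm k\varepsilon$. The only difference is one of packaging — the paper cites the fact (Berrick, after Mal'cev) that such a $G$ has no nontrivial finite-dimensional linear representations, whereas you re-derive it on the spot from Mal'cev's residual finiteness theorem, which is indeed valid over an arbitrary field.
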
	
\begin{proof}
	Let $\lambda \in (\mathbbm k G)^\circ$. Denote by $I$ a two-sided ideal of $\mathbbm k G$ such that
	$\lambda(I)=0$ and $\dim(\mathbbm k G/I)<+\infty$. Again, consider the linear representation
	$\varphi \colon G \to \GL(\mathbbm k G/I)$ defined by~\eqref{EqGRepGILeftMul}.
	As it was noticed in~\cite[Proposition~2.5]{Berrick},
	by~\cite[Theorems~VII and~VIII]{AIMalcev1940}, $G$ has no nontrivial finite dimensional linear representations,
	i.e. $\varphi(g)(t+I) = t+I$ for all $g,t\in G$.
	Hence $gt-t \in I$ for all  $g,t\in G$ and $\Ker\varepsilon \subseteq I$ since the augmentation
	ideal $\Ker\varepsilon$ is the linear span of the elements $g-1$ where $g\in G$.
	Therefore, $\lambda = \alpha\varepsilon$ for some $\alpha \in \mathbbm k$.	
\end{proof}	

\begin{remark}
	For finite fields $\mathbbm k$ Theorem~\ref{TheoremInfinGrNoFinQuotientsFinDualTriv} above follows from~\cite[Exercise~2.5]{Abe}. An example of a specific simple group $G$ depending
	on an arbitrary field  $\mathbbm k$ such that $(\mathbbm k G)^\circ = \mathbbm k \varepsilon \cong \mathbbm k$  was first given in~\cite[Lemma~2.7]{BCM}. 
\end{remark}

Now we are ready to give an example of an algebra $A$ and a unital subalgebra $V\subseteq \End_{\mathbbm k}(A)$
such that the cosupport of $\psi^{\mathbf{Hopf}}_{A,V}$ is strictly less than the cosupport of $\rho^{\mathbf{Hopf}}_{A,V}$.

\begin{theorem}\label{TheoremMnElGradDualTrivial}
	Let $\mathbbm k$ be an arbitrary field and let $G$ be an infinite finitely presented group that has no nontrivial finite quotients. (E.g. $G$ is the Higman group or one of the Thompson simple groups, see~\cite{Higman1951, Higman1974}.)
	 Consider an elementary grading $\Gamma$ on the full matrix algebra $M_n(\mathbbm k)$ for some $n\in\mathbb N$ 
	such that the universal group of $\Gamma$ is isomorphic to~$G$. (Such a grading exists by~\cite[Theorem~4.3]{ASGordienko18Schnabel}.)  Denote by $\rho \colon M_n(\mathbbm k) \to M_n(\mathbbm k) \otimes \mathbbm k G$ the corresponding
	coaction and let $V := \cosupp \rho$.
	Then \begin{equation}\label{EqMnElGradDualTrivial}\cosupp \psi^{\mathbf{Hopf}}_{M_n(\mathbbm k),V} = {\mathbbm k} \id_{M_n(\mathbbm k)} \subsetneqq V = \cosupp \rho.
	\end{equation}
\end{theorem}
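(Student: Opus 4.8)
The plan is to chain together three of the results recalled above---the identification of the universal coacting Hopf algebra of a grading (Theorem~\ref{TheoremUniversalHopfOfAGradingIsJustUniversalGroup}), the Duality Theorem (Theorem~\ref{TheoremUnivHopf(Co)actDuality}), and the computation of the finite dual (Theorem~\ref{TheoremInfinGrNoFinQuotientsFinDualTriv})---so that almost all of the argument is already discharged. Since $M_n(\mathbbm k)$ is finite dimensional, every subspace of $\End_{\mathbbm k}(M_n(\mathbbm k))$, and in particular $V=\cosupp \rho$, is pointwise finite dimensional and closed in the finite topology; hence all the $V$-universal (co)acting Hopf algebras exist and the Duality Theorem applies.

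First I would identify the universal coacting Hopf algebra. Because $\Gamma$ was chosen so that its universal group is $G$, Theorem~\ref{TheoremUniversalHopfOfAGradingIsJustUniversalGroup} applies with $G_\Gamma=G$ and says precisely that $(\mathbbm k G,\rho)$ is the $(\cosupp \rho)$-universal coacting Hopf algebra; that is, $\mathcal H^\square(M_n(\mathbbm k),V)=\mathbbm k G$ and $\rho^{\mathbf{Hopf}}_{M_n(\mathbbm k),V}=\rho$. Feeding this into the Duality Theorem shows that ${}_\square\mathcal H(M_n(\mathbbm k),V)\cong(\mathbbm k G)^\circ$ and that its universal action is the dual action
$$\psi^{\mathbf{Hopf}}_{M_n(\mathbbm k),V}=\rho^\vee(\varkappa_{\mathbbm k G}\otimes\id_{M_n(\mathbbm k)}),$$
whose cosupport is therefore $\{\,\rho^\vee(\lambda\otimes(-))\mid\lambda\in(\mathbbm k G)^\circ\,\}$.

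Next I would compute this cosupport. The group $G$ is finitely presented, hence finitely generated, and has no nontrivial finite quotients, so Theorem~\ref{TheoremInfinGrNoFinQuotientsFinDualTriv} gives $(\mathbbm k G)^\circ=\mathbbm k\varepsilon$. As $\rho$ is a grading coaction, $\rho(a)=a\otimes g$ for $a\in M_n(\mathbbm k)^{(g)}$, and since $\varepsilon(g)=1$ for every group-like $g$ we obtain $\rho^\vee(\varepsilon\otimes(-))=\id_{M_n(\mathbbm k)}$. Hence $\cosupp \psi^{\mathbf{Hopf}}_{M_n(\mathbbm k),V}=\mathbbm k\,\id_{M_n(\mathbbm k)}$, which is the desired left-hand equality. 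The strict inclusion is then immediate: the universal group $G$ is generated by $\supp\Gamma$, and since $1_{M_n(\mathbbm k)}\in M_n(\mathbbm k)^{(1)}$ always forces $1\in\supp\Gamma$, the infinitude (in particular nontriviality) of $G$ makes $\lvert\supp\Gamma\rvert\geqslant 2$; as $\cosupp\rho$ has dimension equal to $\lvert\supp\Gamma\rvert$, we conclude $\mathbbm k\,\id_{M_n(\mathbbm k)}\subsetneqq V$.

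Almost all of the difficulty is carried by the cited theorems, so the only genuine care is needed in the first step: one must verify that the hypothesis ``the universal group of $\Gamma$ is $G$'' means the given $\rho$ is \emph{already} the universal coaction, rather than a coaction that would first have to be lifted to the universal-group level before Theorem~\ref{TheoremUniversalHopfOfAGradingIsJustUniversalGroup} could be invoked. The remaining substantive input---that $(\mathbbm k G)^\circ$ collapses to the scalars---is exactly Theorem~\ref{TheoremInfinGrNoFinQuotientsFinDualTriv}, resting on the residual-finiteness results of Mal'cev cited there; once that is in hand, the identity $\rho^\vee(\varepsilon\otimes(-))=\id_{M_n(\mathbbm k)}$ is routine.
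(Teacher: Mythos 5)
Your proposal is correct and follows essentially the same route as the paper's own proof: identify $(\mathbbm k G,\rho)$ as the $V$-universal coacting pair via Theorem~\ref{TheoremUniversalHopfOfAGradingIsJustUniversalGroup}, pass to the dual action via the Duality Theorem~\ref{TheoremUnivHopf(Co)actDuality}, and collapse $(\mathbbm k G)^\circ$ to $\mathbbm k\varepsilon$ via Theorem~\ref{TheoremInfinGrNoFinQuotientsFinDualTriv}. The only difference is that you spell out the strict inclusion ($|\supp\Gamma|\geqslant 2$ because the infinite group $G$ is generated by $\supp\Gamma$), which the paper leaves implicit.
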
	
\begin{remark}
		Theorem~\ref{TheoremMnElGradDualTrivial} implies that the cosupport of the dual action of a coaction may reduce and can be trivial at all.	
\end{remark}
\begin{proof}[Proof of Theorem~\ref{TheoremMnElGradDualTrivial}]
	By Theorem~\ref{TheoremUniversalHopfOfAGradingIsJustUniversalGroup}, the universal coaction $\rho^{\mathbf{Hopf}}_{M_n(\mathbbm k),V}$ coincides with $\rho$
	since the universal group of $\Gamma$ is isomorphic to~$G$ itself.
	By the Duality Theorem~\ref{TheoremUnivHopf(Co)actDuality},
	the universal action $\psi^{\mathbf{Hopf}}_{M_n(\mathbbm k),V}$ coincides with 
	$\rho^\vee(\varkappa_{\mathbbm k G} \otimes M_n(\mathbbm k))$,
	i.e. ${}_\square\mathcal H(M_n(\mathbbm k),V) = (\mathbbm k G)^\circ$.
	However, by Theorem~\ref{TheoremInfinGrNoFinQuotientsFinDualTriv},
	$(\mathbbm k G)^\circ =  \mathbbm k \varepsilon \cong \mathbbm k$,
	i.e. $\psi^{\mathbf{Hopf}}_{M_n(\mathbbm k),V}$ is trivial
	and $$\cosupp\rho^\vee(\varkappa_{\mathbbm k G} \otimes M_n(\mathbbm k)) = \cosupp \psi^{\mathbf{Hopf}}_{M_n(\mathbbm k),V} = {\mathbbm k} \id_{M_n(\mathbbm k)} \subsetneqq V = \cosupp \rho.$$
\end{proof}	

The direct application of~\cite[Theorem~4.3]{ASGordienko18Schnabel} to the Higman group would prove
the existence of such a grading on $M_n(\mathbbm k)$ only for $n\geqslant 21$. However this number can be reduced by a careful analysis of the relations:
\begin{theorem}\label{TheoremMainNonRegrFinDualTrivial} Let $G$ be the Higman group, i.e. the group with the generators $a,b,c,d$
 subject to the relations
 \begin{equation}\label{EqHigmanGroupRel}
 a^{-1} b a = b^2,\ b^{-1} c b = c^2,\ c^{-1} d c = d^2,\ d^{-1} a d = a^2
 \end{equation}
 and let $\mathbbm k$ be an arbitrary field.
 Then for every $n\geqslant 14$ there exists an elementary $G$-grading $\Gamma$ on $M_n(\mathbbm k)$
 such that:
 \begin{enumerate}
 	\item $G$ is the universal group of $\Gamma$;
 	\item $\Gamma$ is not equivalent to any grading by a finite group;
 	\item if $\rho \colon M_n(\mathbbm k) \to M_n(\mathbbm k) \otimes \mathbbm k G$ is the corresponding
 	coaction and $V := \cosupp \rho$,
 	then~\eqref{EqMnElGradDualTrivial} holds.
 \end{enumerate}	
\end{theorem}
\begin{proof}
	Define the elementary $G$-grading on $M_n(\mathbbm k)$ by
	$$e_{12} \in M_n(\mathbbm k)^{\left( a^{-1} \right)},
	\quad e_{23} \in M_n(\mathbbm k)^{\left( b \right)},
	\quad e_{34} \in M_n(\mathbbm k)^{\left( a \right)},$$
	$$e_{45} \in M_n(\mathbbm k)^{\left( b^{-1} \right)},
	\quad e_{56} \in M_n(\mathbbm k)^{\left( c \right)},
    \quad e_{67} \in M_n(\mathbbm k)^{\left( b \right)},$$
    $$e_{78} \in M_n(\mathbbm k)^{\left( c^{-1} \right)},
	\quad e_{89} \in M_n(\mathbbm k)^{\left( d \right)},
    \quad e_{9,10} \in M_n(\mathbbm k)^{\left( c \right)},$$
    $$e_{10,11} \in M_n(\mathbbm k)^{\left( d^{-1} \right)},
	\quad e_{11,12} \in M_n(\mathbbm k)^{\left( a \right)},
    \quad e_{12,13} \in M_n(\mathbbm k)^{\left( d \right)},$$
    $$e_{13,14} \in M_n(\mathbbm k)^{\left( a^{-1} \right)},
    \quad e_{r,r+1} \in M_n(\mathbbm k)^{\left( 1 \right)}\text{ for }14\leqslant r \leqslant n-1.$$
    
    Then 
	$$e_{21}, e_{14,13} \in M_n(\mathbbm k)^{\left( a \right)},\quad
e_{54} \in M_n(\mathbbm k)^{\left( b \right)},\quad
e_{87} \in M_n(\mathbbm k)^{\left( c \right)},\quad e_{11,10} \in M_n(\mathbbm k)^{\left( d \right)}.$$
    
    Note that the relations~\eqref{EqHigmanGroupRel} imply that
    $$e_{15}=e_{12}e_{23}e_{34}e_{45} \in M_n(\mathbbm k)^{\left( b \right)},\quad
    e_{48}=e_{45}e_{56}e_{67}e_{78} \in M_n(\mathbbm k)^{\left( c \right)},$$
    $$e_{7,11}=e_{78}e_{89}e_{9,10}e_{10,11} \in M_n(\mathbbm k)^{\left( d \right)},\quad
    e_{10,14}=e_{10,11}e_{11,12}e_{12,13}e_{13,14} \in M_n(\mathbbm k)^{\left( a \right)}.$$
    
  Suppose $\Gamma$ is regraded by some group $\tilde G$. Denote by $\tilde \Gamma$ the corresponding grading
  and by $\tilde a, \tilde b, \tilde c, \tilde d$ the elements of $\tilde G$ such that $e_{21}, e_{34}, e_{10,14}, e_{11,12}, e_{14,13} \in M_n(\mathbbm k)^{\left( \tilde a \right)}$,
  $e_{15}, e_{23}, e_{54}, e_{67} \in M_n(\mathbbm k)^{\left( \tilde b \right)}$,
  $e_{48}, e_{56}, e_{87}, e_{9,10} \in M_n(\mathbbm k)^{\left( \tilde c \right)}$,
  $e_{7,11}, e_{89}, e_{11,10}, e_{12,13} \in M_n(\mathbbm k)^{\left( \tilde d \right)}$.
  Then the multiplication rule for matrix units implies that 
  	$$e_{12}, e_{13,14} \in M_n(\mathbbm k)^{\left( \tilde a^{-1} \right)},\qquad
	e_{45} \in M_n(\mathbbm k)^{\left( \tilde b^{-1} \right)},$$
     $$e_{78} \in M_n(\mathbbm k)^{\left( \tilde c^{-1} \right)},\quad
    e_{10,11} \in M_n(\mathbbm k)^{\left( \tilde d^{-1} \right)}$$
 and the elements of $\supp \tilde\Gamma$ belong to the subgroup of $\tilde G$ generated by $\tilde a, \tilde b, \tilde c, \tilde d$. Moreover, applying the equalities $$e_{15}=e_{12}e_{23}e_{34}e_{45}, \quad e_{48}=e_{45}e_{56}e_{67}e_{78},$$
 $$e_{7,11}=e_{78}e_{89}e_{9,10}e_{10,11}, \quad  e_{10,14}=e_{10,11}e_{11,12}e_{12,13}e_{13,14}$$
 once again, we obtain that 
 $$ \tilde a^{-1} \tilde b\tilde a = \tilde b^2,\ \tilde b^{-1} \tilde c \tilde b =\tilde  c^2,\ \tilde c^{-1} \tilde d \tilde c = \tilde d^2,\ \tilde d^{-1} \tilde a \tilde d = \tilde a^2.$$

 Therefore, there exists a unique group homomorphism $G \to \tilde G$ that maps the elements of $\supp \Gamma$
 to the corresponding elements of $\supp \tilde\Gamma$. Thus $G \cong G_\Gamma$.
 
 Since the Higman group $G$ has no nontrivial finite quotients~\cite{Higman1951}, $\Gamma$ cannot be regraded by any finite group.
  Property~(3) follows from Theorem~\ref{TheoremMnElGradDualTrivial}.
\end{proof}	

\section*{Acknowledgments}

We are grateful to Anton Klyachko for suggesting that we consider the Higman group. In addition, we would like to thank the referee for careful reading of our manuscript, pointing out misprints and providing useful suggestions.

\end{document}